\providecommand{\keywords}[1]{\textbf{Keywords:} #1}
\theoremstyle{thmstyletwo}
\newtheorem{theorem}{Theorem}
\newtheorem{assumption}{Assumption}
\newtheorem{lemma}{Lemma}
\newtheorem{corollary}[theorem]{Corollary}
\newtheorem{remark}{Remark}
\numberwithin{equation}{section}
\title{OFF (Proximal) Newton-type Methods with Inexact Derivatives for Unconstrained Optimization}
\author{Hong~Zhu\thanks{zhuhongmath@126.com}}
\affil{School of Mathematical Sciences, Jiangsu University, Zhenjiang, 212013, Jiangsu, China.}
\date{}
\begin{document}

\maketitle

\begin{abstract}
In this paper, we propose objective-function-free (OFF) variants of the proximal Newton method for nonconvex composite optimization problems and the regularized Newton method for unconstrained optimization problems, respectively, using inexact gradients and Hessians. Theoretical analyses verify that the global and local convergence rates of the proposed algorithms remain consistent with those attained under exact evaluations of the objective function and derivatives. To validate the practical applicability of the theoretical assumptions, a lazy gradient strategy is adopted to provide a verifiable scheme for satisfying the accuracy criteria of approximate gradients. For finite-sum optimization problems, an adaptive sampling strategy is further developed to eliminate the circular dependency between sample size and gradient estimation. The proposed algorithm is proven to achieve locally superlinear and quadratic convergence in expectation.
Furthermore, we present an OFF regularized Newton and  negative curvature algorithm, which uses inexact derivatives to locate approximate second-order stationary points in unconstrained optimization. The worst-case iteration and (sample) operation complexity of the developed algorithm is consistent with the optimal results in the existing literature.

\keywords{Objective-function-free method, Inexact gradients and Hessians, OFF proximal Newton-type method, OFF regularized Newton method, OFF regularized Newton and  negative curvature algorithm}
.
%
\end{abstract}

\section{Introduction}\label{sec:intro}
In this paper, we study the objective-function-free (OFF) methods for nonconvex composite optimization problems
\begin{equation}\label{eq:ncm}
\min_{x\in\mathbb{R}^n} \varphi(x): = f(x) + h(x),
\end{equation}
 unconstrained optimization problems
\begin{equation}\label{eq:ucfun}
\min_{x\in\mathbb{R}^n} f(x),
\end{equation}
and its finite-sum form 
\begin{equation}\label{eq:fs}
\min_{x\in\mathbb{R}^n}~f(x) = \frac{1}{m}\sum_{i=1}^mf_i(x)
\end{equation}
under the condition that both gradients and Hessians are approximated.

Optimization methods equipped with inexact gradients and Hessians have been extensively investigated in literature. For instance, relevant studies for Problem~\eqref{eq:ncm} are presented in~\cite{SRB11,DGN13,D22,HWWM24},  studies dedicated to Problems~\eqref{eq:ucfun} and~\eqref{eq:fs} can be found in~\cite{RM19,BDKKR22,BGMT22b,LLHT23,BGMT25,GJT25b,GJT25c}. For strongly convex finite-sum problems formulated as~\eqref{eq:fs}, Roosta and Mahoney~\cite{RM19} developed sub-sampled Newton methods that employ sampled gradients and Hessians, wherein accurate computation of objective function values is mandatory for step-size determination. Research on the proximal Newton and regularized Newton methods with inexact gradients and Hessians remains relatively limited. Numerous studies have shown that the accuracy of the (noisy) objective function is considerably more critical for guaranteeing algorithm convergence compared with that of noisy computed derivatives~\cite{CS17,CMS17,BGMS19,BGMT22,BGMT23}. Notably, the computational cost incurred by objective function evaluation is comparable to that of gradient evaluation. 
Representative methods relying on inexact derivatives are summarized in Table~\ref{tab:summethods}.    
Nevertheless, the inexact conditions for approximate derivatives vary throughout the literature.

\begin{sidewaystable*}[p] 
{\footnotesize
\caption{Summary of methods use inexact derivatives. ``\(\mathbb{E}\)" and ``\(\mathbb{P}\)" refer to ``in expectation" and ``in probability", respectively. ``2o-p" stands for second-order stationary point. Parameters indexed by \(k\) vary with iterations. ``conv." is short for convex. \(\mathcal{O}(\cdot)\) stands for the standard big-O asymptotic upper bound, which hides absolute constant factors and \(\widetilde{\mathcal{O}}(\cdot)\) hides ploy-logarithmic factors. The probabilistic inexact criterion for~\eqref{eq:fs} is typically formulated under the probabilistic convergence conditions of subsampling. ``\(\circ\)" means function value estimation is required. ``L.c." is short for ``Lipschitz constant".}\label{tab:summethods}
\setlength{\tabcolsep}{1pt}
\begin{tabular*}{\textwidth}{@{\extracolsep{\fill}}c|c|ccccc}\hline\hline
{Algs.} &   {Probs.}     &{Approx. gradient (\(g_k\))} & {Approx. Hessian (\(Q_k\))} & rate/iter.comp.  & OFF          & L.c.                                                                                                                                                   \\ \hline\hline  
APG~\cite{SRB11} & {\eqref{eq:ncm} (conv.)} & {\(\sum_k\|g_k - \nabla f(x_k)\| < +\infty\)} & - & \(\mathcal{O}(1/k)\) &\(\checkmark\)  &\(\checkmark\)\\  \hdashline 
APG~\cite{HWWM24} & {\eqref{eq:ncm} (conv.)} & \(\mathbb{E}[\|g_k - \nabla f(x_k)\|] = 0\) & - & \(\mathcal{O}(1/\sqrt{k})\) (\(\mathbb{P}\))& \(\checkmark\) &\(\checkmark\) \\ \hdashline 
{ASGRAD~\cite{GJT25b}} & {\eqref{eq:ucfun}} & \(\mathbb{E}[\|g_k - \nabla f(x_k)\|] = 0\) & {-} & {\(\widetilde{\mathcal{O}}(1/k^{1/4})\) (\(\mathbb{E}\))} & {\(\checkmark\)} &\(\times\)\\   \hdashline 
SSN~\cite{RM19} &  \eqref{eq:fs} (conv.)&  \(\mathbb{P}[\|g_k \!-\! \nabla f(x_k)\|\!\leq\! \rho^k\varepsilon_2] \!\geq\! 1 \!-\! \bar{\delta}\)  &  \(\mathbb{P}[\|Q_k - \nabla^2 f(x_k)\| \leq \varepsilon_1] \geq 1 - \bar{\delta}\) &  - & \(\times\)  & \(\times\)\\\hline\hline  
{SKOFFARp\footnote{Random subspace method, exact gradient is required, only list results for \(p = 2\).}~\cite{BGMT25}} & {\eqref{eq:ucfun}} &  \(\nabla f(x_k)\) & \(Q_k \succeq 0\), \(\|Q_k\| \leq \kappa_Q\) & \(\mathcal{O}(\varepsilon^{-3/2})\) (\(\mathbb{P}\))  & {\(\checkmark\)} & \(\times\) \\ \hdashline 
  \multirow{2}{*}{StOFFARp~\cite{GJT25c}} & \multirow{2}{*}{\eqref{eq:ucfun}/ \eqref{eq:fs}}  & \(\mathbb{E}[\|g_k \!-\! \nabla f(x_k)\|^2] \!\leq\! \kappa_D\!\!\sum_{i=1}^M\|S_{k\!-\!i}\|^2\)\footnote{\(\|S_{k-i}\|\) denotes the history step size.} & - & \(\mathcal{O}(\varepsilon^{-2})\) (\(\mathbb{E}\)) & \multirow{2}{*}{\(\checkmark\)} & \multirow{2}{*}{\(\times\)}\\   
 & &  \(\mathbb{E}[\|g_k \!-\! \nabla f(x_k)\|^\frac{3}{2}] \!\leq\! \kappa_D\!\!\sum_{i=1}^M\|S_{k\!-\!i}\|^3\) & \(\mathbb{E}[\|Q_k \!-\! \nabla^2 f(x_k)\|^3] \!\leq\! \kappa_D\!\!\sum_{i=1}^M\|S_{k\!-\!i}\|^3\) & \(\mathcal{O}(\varepsilon^{-3/2})\) (\(\mathbb{E}\))  \\    \hline %
 
ARC~\cite{CGT11a} (2o-p) &   {\eqref{eq:ucfun}} & \(\nabla f(x_k)\) & \(\|(Q_k - \nabla^2f(x_k))[s_k]\| \leq c\|s_k\|^2\) & \(\mathcal{O}(\varepsilon^{-3/2})\) & \(\times\) & \(\times\)  \\ \hdashline 

 TR~\cite{XRM20} (2o-p) &\multirow{2}{*}{\eqref{eq:ucfun}} &\multirow{4}{*}{\(\nabla f(x_k)\)} &\(\|(Q_k \!-\! \nabla^2f(x_k))s_k\| \!\leq\! \min\{\varepsilon_0, \Delta_k\}\|s_k\|\) & \(\mathcal{O}(\varepsilon^{-5/2})\) &\multirow{4}{*}{\(\times\)}  & \multirow{4}{*}{\(\times\)} \\ 
 ARC~\cite{XRM20} (2o-p) & & &\(\|(Q_k - \nabla^2f(x_k))s_k\| \leq \delta^h\|s_k\|\) & \(\mathcal{O}(\varepsilon^{-3/2})\) &  \\ 
  TR~\cite{XRM20} (2o-p) &\multirow{2}{*}{\eqref{eq:fs}} &  &\(\mathbb{P}[\|Q_k \!-\! \nabla^2f(x_k)\|\!\leq\! \min\{\varepsilon_0, \Delta_k\}] \!\geq\! 1 \!-\! \bar{\delta}\) & \(\mathcal{O}(\varepsilon^{-5/2})\)  (\(\mathbb{P}\))&  \\ 
ARC~\cite{XRM20} (2o-p) & & &\(\mathbb{P}[\|Q_k - \nabla^2f(x_k)\|\leq \delta^h] \geq 1 - \bar{\delta}\) & \(\mathcal{O}(\varepsilon^{-3/2})\)   (\(\mathbb{P}\))&  \\    \hdashline

I-TR~\cite{YXRM21} (2o-p) & \multirow{2}{*}{\eqref{eq:ucfun}} & \(\|g_k - \nabla f(x_k)\| \leq \frac{(1-\eta)\varepsilon}{4}\) & \(\|Q_k - \nabla^2f(x_k)\| \leq \min\{\frac{(1-\eta)\nu\epsilon^{1/2}}{2},1\}\) & \(\mathcal{O}(\varepsilon^{-3/2})\) & \multirow{2}{*}{\(\times \)}  &  \multirow{2}{*}{\(\checkmark\)} \\ 
I-ARC~\cite{YXRM21} (2o-p) & & \(\|g_k - \nabla f(x_k)\| \leq \delta^g(\varepsilon, L)\)\footnote{\(\delta^g(\varepsilon, L) = \frac{1-\eta}{192L_h}(\sqrt{K_H^2 + 8L_h\varepsilon} - K_H)^2\), \(\delta^h(\varepsilon, L) = \frac{1-\eta}{6}\min\{\frac{1}{4}(\sqrt{K_H^2 + 8L_h\varepsilon} - K_H),\nu\varepsilon^{1/2}\}\), where \(K_H \leq L_g + \delta^h(\varepsilon, L)\). } & \(\|Q_k - \nabla^2f(x_k)\| \leq \delta^h(\varepsilon, L)\) &  \(\mathcal{O}(\varepsilon^{-3/2})\) &   \\ \hdashline
 
{TRqNE~\cite{BGMT22b}} (2o-p) & {\eqref{eq:ucfun}}/ \eqref{eq:fs} & \multicolumn{2}{c}{\(\mathbb{P}[(\mathcal{M}_{k,1}^{(1)}\cap \mathcal{M}_{k,2}^{(1)}) \cap \mathcal{M}_{k}^{(2)}] > 1/2 \)}\footnote{\(M_{k,j}^{(1)}\) (\(j = 1, 2\)) restricts the maximum global reduction of the \(j\)-th-order Taylor expansion using exact derivatives within the trust-region radius to a fixed multiple of the reduction obtained by its approximate-derivative counterpart at the computed displacement. \(M_k^{(2)}\) enforces a strict relative error bound between the reductions given by the second-order Taylor expansions formed by exact and approximate derivatives at the algorithm’s accepted step size. We only list results for \(q = 2\).} & \(\mathcal{O}(\varepsilon^{-3})\) (\(\mathbb{E}\)) & {\(\circ\)} & \(\times\) \\ \hdashline
 ALAS~\cite{BDKKR22} (2o-p) & \eqref{eq:fs} & \(\mathbb{P}[\|g_k - \nabla f(x_k)\|] \leq \delta_g\) & \(\mathbb{P}[\|Q_k - \nabla^2f(x_k)\|] \leq \delta_h\) & \(\mathcal{O}(\varepsilon^{-3/2})\) (\(\mathbb{E}\)) & \(\circ\) & \(\times\)\\  \hdashline
 STR~\cite{LLHT23} (2o-p)&  \multirow{2}{*}{\eqref{eq:fs}} & \multirow{2}{*}{\(\|g_k - \nabla f(x_k)\| \leq \delta_g\)} & \multirow{2}{*}{\(\|Q_k - \nabla^2f(x_k)\| \leq v_0\delta_h\)} & \(\mathcal{O}(\max\{\hat{\varepsilon}^{1/2}\tilde{\varepsilon}^{-2}, \hat{\varepsilon}^{-3/2}\})\)\footnote{\(\hat{\varepsilon} = \varepsilon^{1/2} - \delta_h\), \(\tilde{\varepsilon} = \varepsilon - \delta_g\).} & \multirow{2}{*}{\(\circ\)} & \multirow{2}{*}{\(\times\)} \\ 
 SARC~\cite{LLHT23} (2o-p) & & & & \(\mathcal{O}(\max\{\tilde{\varepsilon}^{-3/2}, \hat{\varepsilon}^{-3/2}\})\) &  \\  \hdashline
\multirow{2}{*}{I-N-CG~\cite{YXRWM23} (20-p)} & {\eqref{eq:ucfun}} & \(\|g_k - \nabla f(x_k)\| \leq \delta_{g,k}(\varepsilon,L_h,d,g)\)\footnote{\(\delta_{g,k}(\varepsilon,L_h,d,g) = \frac{1-\zeta}{8}\min\{\frac{3\varepsilon}{65(L_h + \eta)}, \max\{\varepsilon, \min\{\varepsilon^{1/2}\|d_k\|, \|g_k\|, \|g_{k+1}\|\}\}\}\), \(\delta_{h,k}(\varepsilon, L_h) = \frac{(1-\zeta)}{4}\sqrt{L_h}\varepsilon^{1/2}\), \(\delta_{g,k}(\varepsilon,L_h) = \frac{1-\zeta}{8}\min\{\frac{3L_h\varepsilon}{65(L_h+\eta)}, \varepsilon\}\). For Problem~\eqref{eq:ucfun} comes from the minimum Eigenvalue oracle.} & \(\|Q_k - \nabla^2f(x_k)\| \leq \delta_{h,k}(\varepsilon, L_h)\) & \multirow{2}{*}{\(\mathcal{O}(\varepsilon^{-3/2})\) (\(\mathbb{P}\))} & \multirow{2}{*}{\(\checkmark\)} & \multirow{2}{*}{\(\checkmark\)}\\
 & {\eqref{eq:fs}} & \(\mathbb{P}[\|g_k \!-\! \nabla f(x_k)\| \!\leq\! \delta_{g,k}(\varepsilon,L_h)] \geq 1 \!- \!\bar{\delta}\) & \(\mathbb{P}[\|Q_k \!\!-\!\! \nabla^2f(x_k)\| \!\!\leq\! \!\delta_{h,k}(\varepsilon, L_h)] \geq 1 \!\!-\!\! \bar{\delta}\) & &  \\ \hdashline
 \multirow{2}{*}{ RA~\cite{LW25} (2o-p)} & \eqref{eq:ucfun} & \(\|g_k - \nabla f(x_k)\| \leq \frac{1}{3}\max\{\varepsilon, \|g_k\|\}\) & \(\|Q_k - \nabla^2f(x_k)\| \leq \frac{2}{9}\varepsilon^{1/2}\) & \(\mathbb{O}(\varepsilon^{-2})\) (\(\mathbb{E}\))\footnote{The expectation arises from the random sign introduced in the negative-curvature iteration steps.} & \multirow{2}{*}{\(\checkmark\)} & \multirow{2}{*}{\(\checkmark\)} \\
  & \eqref{eq:fs} & \(\mathbb{P}[\|g_k \!\!-\!\! \nabla f(x_k)\| \!\!\leq\!\! \frac{1}{3}\max\{\varepsilon, \|g_k\|\}] \!\!\geq\!\! 1\!\! -\!\! \bar{\delta}\) & \(\mathbb{P}[\|Q_k - \nabla^2f(x_k)\| \leq \frac{2}{9}\varepsilon^{1/2}] \geq 1 - \bar{\delta}\) & \(\mathbb{O}(\varepsilon^{-2})\) (\(\mathbb{P}\)) & 
 \\ \hline\hline 
\end{tabular*}
}
\end{sidewaystable*}

\textit{This paper presents an OFF variant of the proximal Newton method for Problem~\eqref{eq:ncm}, which solely relies on approximate gradients and Hessians. The proposed method achieves global and local convergence rates identical to those of conventional counterparts using  exact function evaluations and derivatives. In particular,  local Q-superlinear convergence is established under the H\"olderian error bound condition for convex settings. Additionally, a lazy gradient variant is developed for scenarios where exact gradient computation is accessible. 
This work further develops an OFF regularized Newton method (OFF-RNM) for Problems~\eqref{eq:ucfun} and~\eqref{eq:fs}, which can be treated as a special case of the OFF Proximal Newton-type method (OFF-PNM) for the aforementioned problem classes. By elaborately designing accuracy criteria for the approximate gradients and Hessians, the OFF-RNM achieves local quadratic convergence for the strongly convex problem~\eqref{eq:ucfun}, outperforming the convergence rate reported in~\cite{RM19}. An adaptive sampling strategy is introduced to resolve the circular dependency between sample size and gradient estimation for  Problem~\eqref{eq:fs}.
The resulting method provably enjoys convergence guarantees in both expectation and probability.
}

A point \(x\) is called an \(\epsilon_g\)-first-order stationary point (FOSP) of Problem~\eqref{eq:ucfun} if \(\|\nabla f(x)\| \leq \epsilon_g\) for some \(\epsilon_g > 0\), where \(\|\cdot\|\) denotes the Euclidean norm.  
A point \(x\) is called an \((\epsilon_g, \epsilon_h)\)-second-order stationary point (SOSP) if it further satisfies \(\lambda_{\min}(\nabla^2 f(x)) \geq -\epsilon_h \) for some \(\epsilon_h > 0\),  where \(\lambda_{\min}(\cdot)\) denotes the minimal eigenvalue of a symmetric matrix. Second-order methods for computing an \((\varepsilon, \varepsilon^{1/2})\)-SOSP of Problem~\eqref{eq:ucfun} with the optimal iteration complexity \(\mathcal{O}(\varepsilon^{-3/2})\) have been widely studied in literature (e.g., \cite{G81,NP06,CGT11a,CGT11b,T13,CRS17,GRV18,RW18,ROW20,CRRW21,ZJHJJXY24,HJZGJY25,HHL25}), among which~\cite{CGT11a,CGT11b} allow for the approximate Hessians. 
Xu et al.~\cite{XRM20} proposed variants of trust-region and adaptive cubic regularization methods for nonconvex finite-sum problems based on the approximate Hessian, achieving the optimal iteration complexity of \(\mathcal{O}(\epsilon^{-3/2})\). Tripuraneni et al.~\cite{TSJRJ18} presented a stochastic cubic regularization method for nonconvex stochastic optimization using approximate gradients and Hessians, which attains an \((\epsilon, \sqrt{L_h\epsilon})\)-SOSP in at most \(\mathcal{O}(\epsilon^{-3/2})\) iterations with \(\widetilde{\mathcal{O}}(\epsilon^{-7/2})\) stochastic gradient and stochastic Hessian-vector functions. Here, \(L_h\) denotes the Lipschitz constant of \(\nabla^2f(x)\).  Yao et al.~\cite{YXRM21} proposed inexact variants of trust region and adaptive cubic regularization methods for Problem~\eqref{eq:ucfun} incorporating approximate gradients and Hessians, which reach an \((\epsilon_g, \epsilon_h)\)-SOSP in at most \(\mathcal{O}(\epsilon_g^{-2}\epsilon_h^{-1})\) and \(\mathcal{O}(\epsilon_g^{-3/2}\epsilon_h^{-3})\) iterations, respectively. Most of the aforementioned methods involve solving nonconvex subproblems and require exact function evaluations.  
Yao et al.~\cite{YXRWM23} proposed an inexact Newton-CG method without line search for Problem~\eqref{eq:ucfun}, which achieves an \((\epsilon, \sqrt{L_h\epsilon})\)-SOSP in at most \(\mathcal{O}(\epsilon^{-3/2})\) iterations with high probability. For Problem~\eqref{eq:fs}, the total sample operation complexity is \(\mathcal{O}(\epsilon^{-7/2})\). However, the accuracy criterion for the approximate gradient in this algorithm depends on two vectors that are not yet availbale (as their computation relies on the approximate gradient itself).  Recently, Li and Wright~\cite{LW25} proposed a randomized algorithm for Problem~\eqref{eq:ucfun} that avoids objective function evaluations,  attaining an \((\epsilon, \sqrt{L_h\epsilon})\)-SOSP in at most \(\widetilde{\mathcal{O}}(\epsilon^{-2})\) iterations with high probability, along with \(\widetilde{\mathcal{O}}(\epsilon^{-9/4})\) gradient and Hessian-vector functions. For Problem~\eqref{eq:fs}, the total sample operation complexity is \(\mathcal{O}(\epsilon^{-4})\). 

\textit{In this paper, we propose an OFF regularized Newton and negative curvature method (OFF-RN2CM) to find an \((\epsilon, \sqrt{L_h\epsilon})\)-SOSP of Problem~\eqref{eq:ucfun}, which achieves the iteration complexity of \(\mathcal{O}(\epsilon^{-3/2})\) and operator complexity of \(\widetilde{\mathcal{O}}(\epsilon^{-7/4})\). For Problem~\eqref{eq:fs}, we show the sample operation complexity of \(\mathcal{O}(\epsilon^{-7/2})\) in expectation.}

\noindent\textbf{Organization.} 
The remainder of this paper is organized as follows. 
In Section~\ref{sec:OFFproximalnewton}, we propose the OFF-PNM method with approximate gradients and Hessians  for Problem~\eqref{eq:ncm}; its iteration complexity and local convergence rate are detailed in Subsection~\ref{sec:globalOFFproximalnewton} and~\ref{sec:localOFFproximalnewton}, respectively.
In Section~\ref{sec:OFFrnm}, we study the OFF-RNM method for Problems~\eqref{eq:ucfun} and~\eqref{eq:fs} and provide both global and local convergence guarantees.     Section~\ref{sec:OFFn2c} presents the OFF-RN2CM method. Subsections~\ref{sec:OFFn2cp2} and~\ref{sec:OFFn2cpfs} detail the iteration and operation complexity for Problems~\eqref{eq:ucfun}, and the iteration and sample operation complexity for Problem~\eqref{eq:fs}, respectively. Concluding remarks are provided in Section~\ref{sec:con}. The main results of this paper are summarized in Table~\ref{tab:summary}.

\begin{sidewaystable*}[p] 

{\small
\caption{Summary of the main results. For problem~\eqref{eq:ncm}, \(\epsilon\)-FOSP refers to a point satisfies \(\|\mathcal{G}(x)\| \leq \epsilon\),  where \(\mathcal{G}(x) = x - {\rm prox}_h(x - \nabla f(x))\) with \({\rm prox}_h(u) = \arg\min_x \{h(x) + \frac{1}{2}\|x - u\|^2\}\). \(\widetilde{\mathcal{G}}(x_k) = x_k - {\rm prox}_h(x_k - g_k)\) denotes the approximate KKT residual mapping defined on \(g_k\).  `\(\sim\)" stands for the case where conclusion holds without explicit derivation in this work. Algorithms~\ref{alg:pmm}-\ref{alg:rnm} and~\ref{alg:rnmfsadap} find the \(\epsilon\)-FOSP of the corresponding problems, Algorithms~\ref{alg:rnmsc} and~\ref{alg:rnmscfs} achieves fast local convergence, Algorithm~\ref{alg:rnmso} finds the \((\epsilon, \sqrt{L_h\epsilon})\)-SOSP) of the corresponding problem. ``adap. samp." is short for ``adaptive sampling".}\label{tab:summary}
\setlength{\tabcolsep}{7pt}
\begin{tabular*}{\textwidth}{c|c|ccccc} \hline\hline
 {Algs.} &   {Probs.}     & {Approx. gradient (\(g_k\))} &  {Approx. Hessian (\(Q_k\)) }&  {L.c.} &  {Global} &   {Local (convex)}                                                                                                                                                                          \\ \hline\hline
\multirow{2}{*}{\ref{alg:pmm}} & \multirow{3}{*}{\eqref{eq:ncm}} & \multirow{2}{*}{\(\|g_k - \nabla f(x_k)\| \leq \bar{\delta}^g_k\|\widetilde{\mathcal{G}}(x_k)\|\)}\footnote{\(\bar{\delta}^g_k\in[0, \bar{\delta}^g]\) for some \(\bar{\delta}^g < 1/2\).} & \multirow{3}{*}{\( \|Q_k - \nabla^2 f(x_k)\| \leq \bar{\delta}^h_k\)}\footnote{\(\bar{\delta}^h_k\in(0, \bar{\delta}^h]\) for some \(\bar{\delta}^h > 0\).} & \multirow{3}{*}{\(\checkmark\)} & Th.~\ref{th:dxksumable}   & Th.~\ref{th:localqcrconv}   \\    
 &   &   &   &   &  \(\mathcal{O}(\varepsilon^{-2})\)    &   (superlinear)\\ \cdashline{6-7}    
 
\ref{alg:pmmlazyg} (lazy gradient) &   & \(g_k = \nabla f(z)\) &    && \(\sim\) & \(\sim\)\\ \hdashline 
\multirow{4}{*}{\ref{alg:rnm} } & \multirow{2}{*}{\eqref{eq:ucfun}} & \multirow{2}{*}{\( \|g_k - \nabla f(x_k)\| \leq \bar{\delta}^g_k\|g_k\|\)} & \multirow{2}{*}{\(\|Q_k - \nabla^2 f(x_k)\| \leq \bar{\delta}^h_k\)}  & \multirow{4}{*}{\(\checkmark\)} &Th.~\ref{th:dxksumablecf}  &   \\
 &  &  &   & &\(\mathcal{O}(\varepsilon^{-2})\)  &   \\ \cdashline{3-4} \cdashline{6-6}  
&\multirow{2}{*}{\eqref{eq:fs}} & \multirow{2}{*}{\( \mathbb{E}_{\xi_k}[\|g_k - \nabla f(x_k)\|] \leq \bar{\delta}^g_k\|g_k\|\)}   & \multirow{2}{*}{\(\mathbb{E}_{\xi_k}[\|Q_k - \nabla^2 f(x_k)\|] \leq \bar{\delta}^h_k\)} & & Th.~\ref{th:rnmfsiter}    & \\ 
& &  & & &  \(\mathcal{O}(\varepsilon^{-2})\)   & \\   
\ref{alg:rnmfsadap} (adap. samp.) & \eqref{eq:fs} & \(\sim\) &  \(\sim\)  &    &  \(\sim\)  & \\ \cdashline{2-7}   

\multirow{2}{*}{\ref{alg:rnmsc} } & \multirow{2}{*}{\eqref{eq:ucfun}}& \multirow{2}{*}{\(\|g_k - \nabla f(x_k)\| \leq \hat{\delta}^g_k\|g_k\|\)\footnote{\(\hat{\delta}^g_k \in [0, \min\{\bar{\delta}^g, \|g_k\|^{\theta}, \frac{\sigma - \hat{\delta}^h_k}{2(L_g + \hat{\delta}^h_k)}\}]\) for some \(\bar{\delta}^g < 1\).}} &\multirow{2}{*}{\(\|Q_k - \nabla^2 f(x_k)\| \leq \hat{\delta}^h_k\)\footnote{\(\hat{\delta}^h_k \in (0, \min\{\bar{\delta}^h, \|g_k\|^{\theta}\}]\) for some \(\bar{\delta} \in (0, \sigma)\).}}  & \multirow{2}{*}{\(\times\)} &    & Th.~\ref{th:cr}  \\
 & &   &   &  &    &  superlinear/quadratic\\ \cdashline{2-7}  

 \multirow{2}{*}{\ref{alg:rnmscfs} (adap. samp.) } &\multirow{2}{*}{\eqref{eq:fs}}&   \multirow{2}{*}{ \( \mathbb{E}_{\xi_k}[\|g_k - \nabla f(x_k)\|] \leq \hat{\delta}^g_k\|g_k\|\) }  &\multirow{2}{*}{ \(\mathbb{E}_{\xi_k}[\|Q_k - \nabla^2 f(x_k)\|] \leq \hat{\delta}^h_k\)}    & \multirow{2}{*}{\(\times\)}  &   & Th.~\ref{th:rnmfsstronglyconvex} (\(\mathbb{E}\)) / Re.~\ref{remark:alg5} (\(\mathbb{P}\))  \\
    & &   &    &   &   &   superlinear/quadratic  \\ \hdashline 
  
\multirow{4}{*}{\ref{alg:rnmso}}  & \multirow{2}{*}{\eqref{eq:ucfun}} &\multirow{2}{*}{\( \|g_k - \nabla f(x_k)\| \leq \frac{1}{3}\varepsilon \)} &\multirow{2}{*}{\( \|Q_k - \nabla^2 f(x_k)\| \leq \frac{1}{18} \varepsilon^{1/2} \)} &   \multirow{4}{*}{\(\checkmark\)} &  Th.~\ref{th:conrnmso}     & \\
 &  & &  &   &    \(\mathcal{O}(\varepsilon^{-3/2})\)    & \\  \cdashline{3-4}\cdashline{6-6}

 & \multirow{2}{*}{\eqref{eq:fs}} & \multirow{2}{*}{\(\mathbb{E}_{\xi_k}[\|g_k - \nabla f(x_k)\|] \leq \frac{1}{3}\varepsilon\)}   & \multirow{2}{*}{\(\mathbb{E}_{\xi_k}[\|Q_k - \nabla^2f(x_k)\|] \leq \frac{1}{18}\varepsilon^{1/2} \)} & & Th.~\ref{th:conrnmsofs2} (\(\mathbb{E}\)) /Re.~\ref{remark:alg6fs} (\(\mathbb{P}\))   &\\ 
    &  &     &  & & \(\mathcal{O}(\varepsilon^{-3/2})\)   &\\ \hline\hline
\end{tabular*}
}
\end{sidewaystable*}

\section{OFF proximal Newton-type method}\label{sec:OFFproximalnewton}
In this section, we study the OFF-PNM method for Problem~\eqref{eq:ncm}. 
We assume the following standard assumptions hold. 

\begin{assumption}\label{assume:ncp}
The solution set of Problem~\eqref{eq:ncm} is non-empty and denote \(\varphi_*\) as the optimal function value. 
\begin{itemize}
\item[(i)] \(f: \mathbb{R}^n\to(-\infty, +\infty]\) is twice continuously differentiable on an open set \(\Omega_1\) containing the effective domain \({\rm dom}\,h\) of \(h\) and \(\nabla f\) is \(L_g\)-Lipschitz continuous over \(\Omega_1\).
\item[(ii)] \(h: \mathbb{R}^n\to(-\infty, +\infty]\) is proper convex, nonsmooth, and lower semicontinuous.
\item[(iii)] For any \(x_0\in {\rm dom}\,h\), the level set \(\mathcal{L}_{\varphi}(x_0) = \{x\vert \varphi(x) \leq \varphi(x_0)\}\) is bounded. 
\end{itemize}
\end{assumption}

\subsection{Algorithm}

At each iteration \(k\in\mathbb{N}\), we require the approximate gradient and Hessian satisfy the following conditions.

\begin{assumption}\label{assume:gkhk}
The approximate gradient \(g_k\) and Hessian \(Q_k\) at iteration \(k\) satisfy
\[
    \|g_k - \nabla f(x_k)\| \leq \delta^g_k\|\widetilde{\mathcal{G}}(x_k)\| \quad {\rm and}\quad \|Q_k - \nabla^2 f(x_k)\| \leq \delta^h_k,
\]
where \(\delta^g_k \in [0, \bar{\delta}^g]\) for some \(\bar{\delta}^g < \frac{1}{2}\), \(\widetilde{\mathcal{G}}(x_k) = x_k - {\rm prox}_h(x_k - g_k)\) denotes the approximate KKT residual mapping defined on \(g_k\), and \(\delta^h_k \in (0, \bar{\delta}^h]\) for some \(\bar{\delta}^h > 0\).
\end{assumption}

Under Assumptions~\ref{assume:ncp} and~\ref{assume:gkhk}, we have
\begin{subequations}\label{eq:assumegq}
\begin{align}
    &\|\mathcal{G}(x_k) - \widetilde{\mathcal{G}}(x_k)\| \leq  \|\nabla f(x_k) - g_k\|  \leq \delta^g_k\|\widetilde{\mathcal{G}}(x_k)\|, \quad \forall x_k \in\mathbb{R}^n; \label{eq:dgtg}\\
   &\|Q_k\| \leq \|\nabla^2f(x_k)\| + \delta^h_k \leq L_g + \delta^h_k, \quad \forall x_k\in \Omega_1. \label{eq:nqk}
\end{align}
\end{subequations}

For the current iterate \(x_k\), define \(H_k = Q_k + ([-\lambda_{\min}(Q_k)]_+ + c_k)I_n\) for some \(c_k > 0\), where \([-\lambda_{\min}(Q_k)]_+= \max\{-\lambda_{\min}(Q_k), 0\}\). With Assumptions~\ref{assume:ncp} and~\ref{assume:gkhk}, and~\eqref{eq:nqk}, we have \(\|H_k\| \leq 2L_g + 2\delta^h_k + c_k\). 
Define \(f_k(x) = f(x_k) + \langle g_k, x - x_k\rangle + \frac{1}{2}\langle x - x_k, H_k(x - x_k)\rangle\) and \(\varphi_k(x) = f_k(x) + h(x)\). The key to OFF-PNM lies in approximately minimizing \(\varphi_k(x)\). 
We present the OFF-PNM for Problem~\eqref{eq:ncm} in Algorithm~\ref{alg:pmm}. 
\begin{algorithm*}[h!]
\caption{OFF proximal Newton-type method (OFF-PNM).}\label{alg:pmm}
\begin{algorithmic}[1]
\Require{\(L_g\), \(x_0\in{\rm dom}\,h\), \(\{\eta_k\}\subseteq [0, 1)\), \(\gamma > 1\), \(\bar{\delta}^g \in [0, \frac{1}{2})\), \(\{\delta^g_k\}\subseteq [0, \bar{\delta}^g]\), \(\bar{\delta}^h > 0\), and \(\{\delta^h_k\}\subseteq [0, \bar{\delta}^h]\).}
\For{\(k = 0, 1, \ldots, \)}
\State{compute \(g_k\) and \(Q_k\);}
\State{compute \(c_k \!=\! \gamma\frac{L_g \!+\! \eta_k \!+\! 2\delta^g_k(1 \!+\! \frac{\eta_k}{2} \!+\! 2L_g \!+\! 2\delta^h_k)}{1 \!-\! 2\delta^g_k}\) and \(H_k \!=\! Q_k \!+\! ([-\lambda_{\min}(Q_k)]_+ \!+\! c_k)I_n\);}
\State{compute \(x_{k+1} \approx \min_x \varphi_k(x)\) such that there exists \(\xi_k\in\partial \varphi_k(x_{k+1})\) satisfies} 
\begin{equation}\label{eq:5}
\|\xi_k\| \leq \frac{\eta_k}{2}\|x_{k+1} - x_k\|.
\end{equation}
\EndFor\\
\Return{\(\{x_k\}\)}
\end{algorithmic}
\end{algorithm*}

\begin{remark}
Accuracy criterion given in~\eqref{eq:5} can be achieved by several proximal-type method by the strongly convexity of \(f_k(x)\), such as the proximal gradient method~\cite{B17} and the FIAST method~\cite{B17}. Further discussions on solvers that satisfy~\eqref{eq:5} can be found in~\cite{Z25}. 
Here are some special case of Algorithm~\ref{alg:pmm}. 
\begin{itemize}
   \item[(a)] If \(\delta^g_k = 0\) and \(\delta^h_k = 0\), then \(c_k = \gamma(L_g + 2\eta_k)\) and Algorithm~\ref{alg:pmm} reduces to the inexact Proximal Newton method~\cite[Algorithm~2]{Z25}. 
    
   \item[(b)] If \(\delta^g_k = 0\) and \(\delta^h_k \neq 0\), then Algorithm~\ref{alg:pmm} is also known as the inexact Proximal quasi-Newton method.

   \item[(c)] If \(Q_k \equiv 0\), then Assumption~\ref{assume:gkhk} can be satisfied with \(\delta^h_k \equiv L_g\) and Algorithm~\ref{alg:pmm} can be regarded as an inexact proximal gradient method with approximate gradients. In this case, \(x_{k+1}\) is updated by
   \[
   x_{k+1} \approx \arg\min_x\{\langle g_k, x - x_k\rangle + \frac{c_k}{2}\|x - x_k\|^2 + h(x)\}, 
      \]
where \(c_k = \gamma\frac{(1 + 8\delta^g_k)L_g + 2\delta^g_k}{1 - 2\delta^g_k}\) and there exists \(\xi_k \in \partial h(x_{k+1}) + g_k + c_k(x_{k+1} - x_k)\) such that \(\|\xi_k\| \leq \frac{\eta_k}{2}\|x_{k+1} - x_k\|\). When \({\rm prox}_h(\cdot)\) is efficiently solvable, we can set \(x_{k+1} = {\rm prox}_{\frac{1}{c_k}h}(x_k - \frac{1}{c_k}g_k)\). This iteration can be seen as a special case of the I-PGM presented in~\cite{NGN25} with \(q = 1\) and \(\rho  = \frac{(\gamma - 1 + 2(4\gamma + 1)\delta^g_k)L_g + 2\gamma\delta^g_k}{1 - 2\delta^g_k}\). In case \(\delta^g_k \equiv 0\), we obtain the proximal gradient method with \(c_k = \gamma L_g\). 
   \end{itemize}
  
The minimum eigenvalue \(\lambda_{\min}(Q_k)\) is required to construct the matrix \(H_k\) in Algorithm~\ref{alg:pmm}. \([-\lambda_{\min}(Q_k)]_+\) can be substituted by any non-negative scalar \(u_k\) satisfying \(u_k \geq [-\lambda_{\min}(Q_k)]_+\), where the sequence \(\{u_k\}_{k\in\mathbb{N}}\) is uniformly bounded from above by \(\bar{u} > 0\). The parameter \(u_k\) can be determined in an adaptive manner via Cholesky factorization, which verifies the positive semi-definiteness of \(Q_k + u_kI_n\).  In this case, \(\|H_k\| \leq L_g + \delta_k^h + \bar{u} + c_k\) with \(c_k = \gamma\frac{L_g + \eta_k + 2\delta^g_k(1 + \frac{\eta_k}{2} + L_g + \delta^h_k + \bar{u})}{1 - 2\delta^g_k}\). 
\end{remark}

\subsection{Global convergence of OFF-PNM}\label{sec:globalOFFproximalnewton}

We first show that the approximate KKT residual mapping is bounded by the difference of successive iterates. 
\begin{lemma}\label{lem:tildegk}
    Suppose Assumption~\ref{assume:ncp} is satisfied. Let \(\{x_k\}\) be the sequence generated by Algorithm~\ref{alg:pmm}. Then 
    \begin{equation}\label{eq:boundtildegk}
        \|\widetilde{\mathcal{G}}(x_k)\| \leq (1  + \frac{\eta_k}{2} + \|H_k\|)\|x_k - x_{k+1}\|.
    \end{equation}
\end{lemma}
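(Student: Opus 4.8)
The plan is to confront the first-order condition characterizing $\widetilde{\mathcal G}(x_k)$ with the inexact stationarity condition~\eqref{eq:5} satisfied by $x_{k+1}$, using the monotonicity of $\partial h$ (Assumption~\ref{assume:ncp}(ii)), and then to close the resulting estimate by exploiting that the regularized model matrix $H_k$ is positive definite with a modulus $c_k$ deliberately chosen large enough to absorb the inexactness.

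First I would abbreviate $G_k := \widetilde{\mathcal G}(x_k) = x_k - p_k$, where $p_k := {\rm prox}_h(x_k - g_k)$, and $d_k := x_{k+1} - x_k$. The defining property of the proximal map gives $(x_k - g_k) - p_k = G_k - g_k \in \partial h(p_k)$, while, since $\nabla f_k(x) = g_k + H_k(x - x_k)$, condition~\eqref{eq:5} supplies a vector $\xi_k$ with $\|\xi_k\| \le \tfrac{\eta_k}{2}\|d_k\|$ and $\xi_k - g_k - H_k d_k \in \partial h(x_{k+1})$. Inserting these two subgradients into the monotonicity inequality for $\partial h$ at the points $p_k$ and $x_{k+1}$ and using $p_k - x_{k+1} = -(G_k + d_k)$, I would expand the inner product to obtain
\[
\|G_k\|^2 + \langle H_k d_k,\, d_k\rangle \;\le\; -\langle G_k,\,(I_n + H_k)d_k\rangle + \langle \xi_k,\, G_k + d_k\rangle .
\]
Cauchy--Schwarz, together with $\|I_n + H_k\| = 1 + \|H_k\|$ (valid because $H_k$ is symmetric positive semidefinite), $\|\xi_k\| \le \tfrac{\eta_k}{2}\|d_k\|$ and $\|G_k + d_k\| \le \|G_k\| + \|d_k\|$, then turns the right-hand side into $\big(1 + \tfrac{\eta_k}{2} + \|H_k\|\big)\|G_k\|\|d_k\| + \tfrac{\eta_k}{2}\|d_k\|^2$.

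The only step I expect to require care is the handling of the leftover term $\tfrac{\eta_k}{2}\|d_k\|^2$: one must not discard $\langle H_k d_k, d_k\rangle$ but instead bound it below. Since $Q_k + [-\lambda_{\min}(Q_k)]_+ I_n \succeq 0$ one has $H_k \succeq c_k I_n$, hence $\langle H_k d_k, d_k\rangle \ge c_k\|d_k\|^2$; and the update $c_k = \gamma\frac{L_g + \eta_k + 2\delta^g_k(1 + \eta_k/2 + 2L_g + 2\delta^h_k)}{1 - 2\delta^g_k}$ with $\gamma > 1$ forces $c_k \ge \gamma(L_g + \eta_k) \ge \eta_k \ge \tfrac{\eta_k}{2}$. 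Thus $\langle H_k d_k, d_k\rangle \ge \tfrac{\eta_k}{2}\|d_k\|^2$, which cancels the leftover term and leaves
\[
\|G_k\|^2 \;\le\; \big(1 + \tfrac{\eta_k}{2} + \|H_k\|\big)\,\|G_k\|\,\|x_k - x_{k+1}\| ;
\]
dividing by $\|G_k\|$ (the case $G_k = 0$ being trivial, since then the inequality reads $0 \le (\cdots)$) yields~\eqref{eq:boundtildegk}. Had one instead simply dropped $\langle H_k d_k, d_k\rangle$, the resulting quadratic inequality in $\|G_k\|$ would only give the strictly weaker constant $\tfrac12\big(C + \sqrt{C^2 + 2\eta_k}\big)$ with $C = 1 + \tfrac{\eta_k}{2} + \|H_k\|$, so the enlarged regularization parameter $c_k$ is precisely what makes the sharp bound~\eqref{eq:boundtildegk} available.
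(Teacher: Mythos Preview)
Your argument is correct. You and the paper both reduce to the monotonicity of $\partial h$ and Cauchy--Schwarz, but the intermediate decomposition differs. The paper introduces an auxiliary residual $r_k(x_{k+1}) := x_{k+1} - \mathrm{prox}_h\bigl(x_{k+1} - (g_k + H_k d_k)\bigr)$, first bounds $\|r_k(x_{k+1})\| \le \tfrac{\eta_k}{2}\|d_k\|$ via nonexpansiveness of the proximal map and~\eqref{eq:5}, then applies monotonicity of $\partial h$ at the two \emph{exact} prox points $x_k - \widetilde{\mathcal G}(x_k)$ and $x_{k+1} - r_k(x_{k+1})$ to get $\|\widetilde{\mathcal G}(x_k) - r_k(x_{k+1})\| \le (1+\|H_k\|)\|d_k\|$, and finishes by the triangle inequality. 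In that route one only needs $H_k \succeq 0$ to drop $\langle H_k d_k,d_k\rangle$; no lower bound on $c_k$ is invoked. Your route is more direct---no auxiliary point, no nonexpansiveness---but pays for it with the leftover $\tfrac{\eta_k}{2}\|d_k\|^2$ term, which you correctly absorb using $H_k \succeq c_k I_n$ and $c_k \ge \gamma(L_g+\eta_k) \ge \tfrac{\eta_k}{2}$. A minor practical point in favor of the paper's decomposition is that the residual $r_k(x_{k+1})$ and the bound $\|r_k(x_{k+1})\| \le \tfrac{\eta_k}{2}\|d_k\|$ are reused downstream in the local analysis (Lemma~\ref{lem:dxkp1tobarxk} and Theorem~\ref{th:localqcr}).
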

\begin{proof}
    Define \(r_k(x) = x - {\rm prox}_h(x - (g_k + H_k(x - x_k)))\). We have 
    \begin{equation}\label{eq:7}
        x_{k+1} - r_k(x_{k+1}) = {\rm prox}_h(x_{k+1} - (g_k + H_k(x_{k+1} - x_k))).
    \end{equation}
From accuracy criterion given in~\eqref{eq:5}, \(x_{k+1}\) can be stated as 
\begin{equation}\label{eq:iteration}
x_{k+1} = \arg\min_{x\in\mathbb{R}^n}\{\varphi_k(x) + \langle -\xi_k, x - x_k\rangle\},
\end{equation}
which combine with the definition of \(\varphi_k\) yields
    \begin{equation}\label{eq:8}
        x_{k+1} = {\rm prox}_h(x_{k+1} - (g_k + H_k(x_{k+1} - x_k)) + \xi_k).
    \end{equation}
    Using the nonexpansivity of \({\rm prox}_h\)~\cite[Th. 6.42]{B17},~\eqref{eq:7} and \eqref{eq:8} yields 
    \begin{equation}\label{eq:9}
        \|r_k(x_{k+1})\| \leq \|\xi_k\| \overset{\eqref{eq:5}}{\leq} \frac{\eta_k}{2}\|x_k - x_{k+1}\|.
    \end{equation}
    Notice that~\eqref{eq:7} also implies
    \begin{equation}\label{eq:10}
        r_k(x_{k+1}) - (g_k + H_k(x_{k+1} - x_k)) \in \partial h(x_{k+1} - r_k(x_{k+1})).
    \end{equation}
    From the definition of \(\widetilde{\mathcal{G}}(x_k)\), we have
    \begin{equation}\label{eq:11}
        \widetilde{\mathcal{G}}(x_k) - g_k \in \partial h(x_k - \widetilde{\mathcal{G}}(x_k)). 
    \end{equation}
    Using the monotonicity of \(\partial h\),~\eqref{eq:10} and~\eqref{eq:11} yield 
    \[
    \langle \widetilde{\mathcal{G}}(x_k) + H_k(x_{k+1} - x_k) - r_k(x_{k+1}), x_k - \widetilde{\mathcal{G}}(x_k) - x_{k+1} + r_k(x_{k+1}) \rangle \geq 0,
    \]
    which leads to
    \begin{align*}
        \|\widetilde{\mathcal{G}}(x_k) - r_k(x_{k+1})\|^2         \leq& \langle \widetilde{\mathcal{G}}(x_k) - r_k(x_{k+1}), x_k - x_{k+1}\rangle - \langle \widetilde{\mathcal{G}}(x_k) - r_k(x_{k+1}), H_k(x_{k+1} - x_k)\\
        &-\langle H_k(x_{k+1} - x_k), x_k - x_{k+1}\rangle\\
        \leq& \langle \widetilde{\mathcal{G}}(x_k) - r_k(x_{k+1}), x_k - x_{k+1} + H_k(x_k - x_{k+1})\rangle,
    \end{align*}
    where the second inequality holds since \(H_k\) is positive definite. Hence, we have
    \[
    \|\widetilde{\mathcal{G}}(x_k) - r_k(x_{k+1})\| \leq (1 + \|H_k\|)\|x_k - x_{k+1}\|.
    \]
    Therefore, 
    \[
    \|\widetilde{\mathcal{G}}(x_k)\| \leq \|\widetilde{\mathcal{G}}(x_k) - r_k(x_{k+1})\| + \|r_k(x_{k+1})\|
        \overset{\eqref{eq:9}}{\leq} (1 + \|H_k\|)\|x_k - x_{k+1}\| + \frac{\eta_k}{2}\|x_k - x_{k+1}\|. 
    \]
    The desired result is satisfied.  
\end{proof}

\begin{lemma}\label{lem:dvarphi}
    Suppose Assumptions~\ref{assume:ncp} and~\ref{assume:gkhk} are satisfied. Let \(\{x_k\}\) be the sequence generated by Algorithm~\ref{alg:pmm}. Then 
    \begin{equation}\label{eq:dvarphi}
        \varphi(x_k) \geq \varphi(x_{k+1}) + \frac{1}{2}(c_k - L_g - \eta_k - 2\delta^g_k(1 + \frac{\eta_k}{2} + \|H_k\|))\|x_k - x_{k+1}\|^2. 
    \end{equation}
\end{lemma}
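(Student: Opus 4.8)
The plan is to combine the descent lemma for the smooth part $f$ with the first-order (subgradient) inequality for the convex part $h$ evaluated at the inexact subproblem solution $x_{k+1}$, and then to absorb all perturbation terms using the inexactness conditions in Assumption~\ref{assume:gkhk} and the bound from Lemma~\ref{lem:tildegk}.

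First I would record the subgradient identity already available from the proof of Lemma~\ref{lem:tildegk}: from $\xi_k\in\partial\varphi_k(x_{k+1})$ and the sum rule $\partial\varphi_k(x_{k+1}) = g_k + H_k(x_{k+1}-x_k) + \partial h(x_{k+1})$ (equivalently, from~\eqref{eq:8}), we have $\xi_k - g_k - H_k(x_{k+1}-x_k) \in \partial h(x_{k+1})$, so convexity of $h$ gives
\[
h(x_k) \ge h(x_{k+1}) + \langle \xi_k - g_k - H_k(x_{k+1}-x_k),\, x_k - x_{k+1}\rangle.
\]
Next, since $\nabla f$ is $L_g$-Lipschitz on the open set $\Omega_1$, which contains the convex set ${\rm dom}\,h \ni x_k, x_{k+1}$, the descent lemma yields
\[
f(x_{k+1}) \le f(x_k) + \langle \nabla f(x_k),\, x_{k+1}-x_k\rangle + \tfrac{L_g}{2}\|x_{k+1}-x_k\|^2.
\]
Adding these two inequalities and using $\varphi = f + h$, the $h(x_k)$ and $f(x_k)$ terms recombine into $\varphi(x_k)$ and, after rearranging, I obtain
\[
\varphi(x_{k+1}) \le \varphi(x_k) + \langle \nabla f(x_k) - g_k,\, x_{k+1}-x_k\rangle + \langle \xi_k,\, x_{k+1}-x_k\rangle + \tfrac{L_g}{2}\|x_{k+1}-x_k\|^2 - \langle x_{k+1}-x_k,\, H_k(x_{k+1}-x_k)\rangle .
\]

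Then I would estimate the three inner-product terms. For the gradient error, Cauchy--Schwarz, Assumption~\ref{assume:gkhk} ($\|\nabla f(x_k) - g_k\| \le \delta^g_k\|\widetilde{\mathcal{G}}(x_k)\|$) and Lemma~\ref{lem:tildegk} ($\|\widetilde{\mathcal{G}}(x_k)\| \le (1 + \tfrac{\eta_k}{2} + \|H_k\|)\|x_k-x_{k+1}\|$) give $\langle \nabla f(x_k) - g_k, x_{k+1}-x_k\rangle \le \delta^g_k(1+\tfrac{\eta_k}{2}+\|H_k\|)\|x_k-x_{k+1}\|^2$. For the residual term, the stopping rule~\eqref{eq:5} gives $\langle \xi_k, x_{k+1}-x_k\rangle \le \|\xi_k\|\,\|x_{k+1}-x_k\| \le \tfrac{\eta_k}{2}\|x_k-x_{k+1}\|^2$. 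For the curvature term, the construction $H_k = Q_k + ([-\lambda_{\min}(Q_k)]_+ + c_k)I_n$ gives $\lambda_{\min}(H_k) = \max\{0, \lambda_{\min}(Q_k)\} + c_k \ge c_k$, hence $\langle x_{k+1}-x_k, H_k(x_{k+1}-x_k)\rangle \ge c_k\|x_{k+1}-x_k\|^2$. Substituting these bounds and moving $\varphi(x_{k+1})$ to the left gives $\varphi(x_k) \ge \varphi(x_{k+1}) + \big(c_k - \tfrac{L_g}{2} - \tfrac{\eta_k}{2} - \delta^g_k(1+\tfrac{\eta_k}{2}+\|H_k\|)\big)\|x_k-x_{k+1}\|^2$; since $c_k > 0$ this coefficient is at least $\tfrac12\big(c_k - L_g - \eta_k - 2\delta^g_k(1+\tfrac{\eta_k}{2}+\|H_k\|)\big)$, which is exactly~\eqref{eq:dvarphi}. (Alternatively, one can package the first two steps as the $c_k$-strong convexity of $\varphi_k$ and simply discard the nonnegative term $\tfrac12\langle x_{k+1}-x_k, H_k(x_{k+1}-x_k)\rangle$, landing directly on the stated coefficient $\tfrac{c_k}{2}$.)

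There is no genuine obstacle here; the points to watch are purely bookkeeping: using the subgradient relation $\xi_k - g_k - H_k(x_{k+1}-x_k)\in\partial h(x_{k+1})$ consistently (this is precisely what underlies~\eqref{eq:8}), and making sure the descent-lemma, stopping-criterion, gradient-inexactness and positive-definiteness estimates all point in the direction needed to produce a decrease. Note that~\eqref{eq:dvarphi} is only informative once $c_k$ exceeds $L_g + \eta_k + 2\delta^g_k(1+\tfrac{\eta_k}{2}+\|H_k\|)$, which is exactly the role of the explicit formula for $c_k$ in Algorithm~\ref{alg:pmm}; that choice, however, enters the subsequent complexity analysis rather than the proof of this lemma.
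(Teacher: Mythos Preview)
Your proof is correct and follows essentially the same route as the paper: descent lemma for $f$, a convexity/subgradient inequality at $x_{k+1}$, the stopping rule~\eqref{eq:5}, Assumption~\ref{assume:gkhk} together with Lemma~\ref{lem:tildegk}, and $H_k\succeq c_kI_n$. The only cosmetic difference is that the paper applies convexity to the whole model $\varphi_k$ (so only $\tfrac12\langle\cdot,H_k\cdot\rangle$ survives, giving the coefficient $\tfrac{c_k}{2}$ directly), whereas you apply convexity to $h$ alone and obtain the full $\langle\cdot,H_k\cdot\rangle$ term, yielding the slightly sharper coefficient $c_k-\tfrac{L_g}{2}-\tfrac{\eta_k}{2}-\delta^g_k(1+\tfrac{\eta_k}{2}+\|H_k\|)$ before weakening to~\eqref{eq:dvarphi}; you already note both variants.
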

\begin{proof}
    Notice that 
    \begin{align}
    \varphi(x_k)  \!=\!& \varphi_k(x_k) \!\geq\! \varphi_k(x_{k+1}) \!+\! \langle \xi_k, x_k \!-\! x_{k+1}\rangle \!\geq\! \varphi_k(x_{k+1}) \!-\! \|\xi_k\|\|x_{k} \!-\! x_{k+1}\| \nonumber \\
    \geq\!& \varphi_k(x_{k+1}) \!-\! \frac{\eta_k}{2}\|x_k \!-\! x_{k+1}\|^2 \nonumber \\
    =\!& \varphi(x_{k+1}) - f(x_{k+1}) + f(x_k) + \langle \nabla f(x_k), x_{k+1} -x_{k}\rangle + \langle g_k - \nabla f(x_k), x_{k+1} - x_k\rangle \nonumber \\
  &  + \frac{1}{2}\langle x_{k+1} - x_k, H_k(x_{k+1} - x_k)\rangle  - \frac{\eta_k}{2}\|x_k - x_{k+1}\|^2 \nonumber \\
    \geq\!& \varphi(x_{k+1}) \!-\! \frac{L_g}{2}\|x_{k+1} \!-\! x_k\|^2 \!-\! \delta^g_k\|\widetilde{\mathcal{G}}(x_k)\|\|x_{k+1} \!-\! x_k\| \!+\! \frac{1}{2}\langle x_{k+1} \!-\! x_k, H_k(x_{k+1} \!-\! x_k)\rangle \nonumber \\
    -&\! \frac{\eta_k}{2}\|x_k \!-\! x_{k+1}\|^2 \nonumber \\
    \overset{\eqref{eq:boundtildegk}}{\geq}\!& \varphi(x_{k\!+\!1}) \!+\! \frac{1}{2}\langle x_{k\!+\!1} \!-\! x_k, (H_k \!-\! (L_g \!+\! \eta_k)I_n)(x_{k\!+\!1} \!-\! x_k)\rangle \!-\! \delta^g_k(1 \!+\! \frac{\eta_k}{2} \!+\! \|H_k\|)\|x_k \!-\! x_{k\!+\!1}\|^2\nonumber \\
    \geq\!& \varphi(x_{k+1}) + \frac{1}{2}(c_k - L_g -\eta_k - 2\delta^g_k(1 + \frac{\eta_k}{2} + \|H_k\|))\|x_k - x_{k+1}\|^2,\nonumber 
\end{align}
where the first inequality holds since \(\varphi_k(x)\) is convex and \(\xi_k \in \partial \varphi_k(x_{k+1})\), the forth inequality follows from Assumptions~\ref{assume:ncp} (i) and~\ref{assume:gkhk}, and the last inequality follows from \(H_k\succeq c_kI_n\) and \(\delta^g_k \leq 1\), where \(A \succeq B\) means \(A - B\) is positive semidefinite.

The desired result is satisfied. 
\end{proof}

Let \(\mathcal{S}^*\) be the set of stationary points of Problem~\eqref{eq:ncm}. Then \(\bar{x}\in \mathcal{S}^*\) if and only if \(\mathcal{G}(\bar{x}) = 0\).  

\begin{theorem}\label{th:dxksumable}
    Suppose Assumptions~\ref{assume:ncp} and~\ref{assume:gkhk} are satisfied. Let \(\{x_k\}\) be the sequence generated by Algorithm~\ref{alg:pmm}. Then the following statements hold. 
    \begin{itemize}
        \item[(a)] Let \(\omega(x_0)\) be the cluster points set of \(\{x_k\}\). Then \(\omega(x_0)\subseteq \mathcal{S}^*\) is nonempty and compact.    
         \item[(b)]   Let \(\beta_0 = 2L_g + 2\bar{\delta}^h + \frac{\gamma(L_g + 1 + 2\bar{\delta}^g(\frac{3}{2}+2L_g + 2\bar{\delta}^h))}{1 - 2\bar{\delta}^g}\) and 
\[
\overline{K}_1 = \lceil \frac{2(\frac{3}{2}+{\beta}_0^2)(\varphi(x_0) - \varphi_*)}{(\gamma - 1)L_g\varepsilon^{2}}\rceil.
\]
Then Algorithm~\ref{alg:pmm} terminates in at most \(\overline{K}_1\) iterations at a point \(x_k\) satisfying \(\|\mathcal{G}(x_k)\| \leq \frac{3}{2}\varepsilon\).
    \end{itemize}
\end{theorem}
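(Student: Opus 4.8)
The plan is to convert Lemmas~\ref{lem:dvarphi} and~\ref{lem:tildegk} into a quantitative sufficient-decrease estimate and then run the standard telescoping / compactness arguments on top of it.

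\emph{Step 1 (uniform sufficient decrease).} I would first substitute the explicit formula for $c_k$ from Algorithm~\ref{alg:pmm} into the right-hand side of~\eqref{eq:dvarphi}, bounding $-2\delta^g_k(1+\tfrac{\eta_k}{2}+\|H_k\|)$ from below via $\|H_k\|\le 2L_g+2\delta^h_k+c_k$ (from~\eqref{eq:nqk}). The term $c_k(1-2\delta^g_k)$ then cancels $L_g+\eta_k+2\delta^g_k(1+\tfrac{\eta_k}{2}+2L_g+2\delta^h_k)$ up to the factor $\gamma$, leaving
\[
\varphi(x_k)-\varphi(x_{k+1})\;\ge\;\tfrac{\gamma-1}{2}\Big(L_g+\eta_k+2\delta^g_k\big(1+\tfrac{\eta_k}{2}+2L_g+2\delta^h_k\big)\Big)\|x_k-x_{k+1}\|^2\;\ge\;\tfrac{(\gamma-1)L_g}{2}\|x_k-x_{k+1}\|^2 .
\]
Hence $\{\varphi(x_k)\}$ is nonincreasing; being bounded below by $\varphi_*$ it converges, and telescoping gives $\sum_{k\ge0}\|x_k-x_{k+1}\|^2\le\tfrac{2(\varphi(x_0)-\varphi_*)}{(\gamma-1)L_g}<\infty$, so $\|x_k-x_{k+1}\|\to0$.

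\emph{Step 2 (proof of (a)).} Since $\varphi(x_k)\le\varphi(x_0)$, the iterates lie in $\mathcal{L}_\varphi(x_0)$, which is bounded by Assumption~\ref{assume:ncp}(iii); thus $\{x_k\}$ is bounded, $\omega(x_0)\ne\emptyset$, and as the accumulation set of a bounded sequence it is automatically closed, hence compact. To identify its points, I would note that $\eta_k,\delta^g_k,\delta^h_k$ range over bounded sets with $\bar\delta^g<\tfrac12$, so the denominator $1-2\delta^g_k$ of $c_k$ is bounded away from $0$ and therefore $c_k$, $\|H_k\|$, and $\beta_k:=1+\tfrac{\eta_k}{2}+\|H_k\|$ are uniformly bounded, say $\beta_k\le\beta$. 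Then Lemma~\ref{lem:tildegk} gives $\|\widetilde{\mathcal{G}}(x_k)\|\le\beta\|x_k-x_{k+1}\|\to0$, and~\eqref{eq:dgtg} gives $\|\mathcal{G}(x_k)\|\le(1+\bar\delta^g)\|\widetilde{\mathcal{G}}(x_k)\|\to0$. By lower semicontinuity of $\varphi$, any cluster point $\bar x$ satisfies $\varphi(\bar x)\le\varphi(x_0)<\infty$, so $\bar x\in{\rm dom}\,h\subseteq\Omega_1$, where $\mathcal{G}$ is continuous (as $\nabla f$ is continuous on $\Omega_1$ and ${\rm prox}_h$ is nonexpansive); passing to the limit along a subsequence $x_{k_j}\to\bar x$ yields $\mathcal{G}(\bar x)=0$, i.e.\ $\bar x\in\mathcal{S}^*$.

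\emph{Step 3 (proof of (b)).} Equip the algorithm with the natural (computable) stopping test $\|\widetilde{\mathcal{G}}(x_k)\|\le\varepsilon$; at such a $k$, \eqref{eq:dgtg} gives $\|\mathcal{G}(x_k)\|\le(1+\bar\delta^g)\varepsilon<\tfrac32\varepsilon$. For every non-terminal index $j<k$ we have $\|\widetilde{\mathcal{G}}(x_j)\|>\varepsilon$, so Lemma~\ref{lem:tildegk} gives $\|x_j-x_{j+1}\|>\varepsilon/\beta_j$ and Step~1 gives $\varphi(x_j)-\varphi(x_{j+1})>\tfrac{(\gamma-1)L_g}{2}\,\varepsilon^2/\beta_j^2$. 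Summing over $j=0,\dots,k-1$, using $\varphi(x_k)\ge\varphi_*$ together with the explicit uniform bound on $\beta_j$ (which fixes the constant written as $\tfrac32+\beta_0^2$ in $\overline{K}_1$), bounds $k$ above by $\overline{K}_1$; hence an iterate with $\|\mathcal{G}(x)\|\le\tfrac32\varepsilon$ is produced within $\overline{K}_1$ iterations.

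\emph{Main obstacle.} The only step requiring real care is Step~1: checking that the prescribed $c_k$ yields the clean factor $\gamma-1$ after invoking $\|H_k\|\le 2L_g+2\delta^h_k+c_k$, and then tracking the explicit uniform bounds on $c_k$, $\|H_k\|$, and $\beta_k$ that pin down the constant in $\overline{K}_1$ — this is exactly where the condition $\bar\delta^g<\tfrac12$ is indispensable. The monotonicity/telescoping argument, the continuity of $\mathcal{G}$, and the compactness of $\omega(x_0)$ are routine.
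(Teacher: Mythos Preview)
Your proposal is correct and follows essentially the same route as the paper: substitute the formula for $c_k$ into Lemma~\ref{lem:dvarphi} using $\|H_k\|\le 2L_g+2\delta^h_k+c_k$ to extract the $(\gamma-1)$-factor, combine with Lemma~\ref{lem:tildegk} to get sufficient decrease in terms of $\|\widetilde{\mathcal{G}}(x_k)\|$, and then run the standard boundedness/monotonicity/telescoping arguments. The paper organizes part~(b) as a contradiction argument (assume no termination for $\overline{K}_1$ steps, sum, contradict $\varphi_*$) rather than your direct upper bound on $k$, but the content is identical.
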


\begin{proof}
(a) Notice that \(c_k = \gamma\frac{L_g + \eta_k + 2\delta^g_k(1 + \frac{\eta_k}{2} + 2L_g + 2\delta^h_k)}{1 - 2\delta^g_k}\), from~\eqref{eq:dvarphi}, we have
    \begin{align}
    \varphi(x_k) \geq& \varphi(x_{k+1}) + \frac{\gamma-1}{2}(L_g + \eta_k + 2\delta^g_k(1 + \frac{\eta_k}{2} + 2L_g + 2\delta^h_k))\|x_k -x_{k+1}\|^2  \nonumber \\
    \geq& \varphi(x_{k+1}) + \frac{\gamma-1}{2}L_g\|x_k -x_{k+1}\|^2 \overset{\eqref{eq:boundtildegk}}{\geq}  \varphi(x_{k+1}) + \frac{(\gamma-1)L_g}{2(1 + \frac{\eta_k}{2} + \|H_k\|)^2}\|\widetilde{\mathcal{G}}(x_k)\|^2 \nonumber \\
    \geq&\varphi(x_{k+1}) + \frac{(\gamma - 1)L_g}{2(\frac{3}{2} + \beta_0)^2}\|\widetilde{\mathcal{G}}(x_k)\|^2, \label{eq:dvarphixk}
    \end{align}
which implies that  
    \begin{itemize}
        \item \(\{x_k\}\subseteq \mathcal{L}_{\varphi}(x_0)\), which implies that \(\omega(x_0)\) is nonempty and bounded under Assumption~\ref{assume:ncp} (i);
        \item \(\{\varphi(x_k)\}\) is monotonically non-increasing, which implies \(\lim_{k\to\infty}\|\widetilde{\mathcal{G}}(x_k)\| = 0\). Recall~\eqref{eq:dgtg}, we have \(\lim_{k\to\infty}\|\mathcal{G}(x_k)\| = 0\). Combine with the continuity of \(\mathcal{G}\), we obtain the closedness of \(\omega(x_0)\). Moreover, \(\|\mathcal{G}(\bar{x})\| = 0\) for any \(\bar{x}\in\omega(x_0)\), which yields \(\omega(x_0) \subseteq \mathcal{S}^*\).
    \end{itemize}


(b) We first show that \(\|\widetilde{\mathcal{G}}(x_k)\| \leq \varepsilon\) for some \(k\in[0, \overline{K}_1]\). Suppose that \(\|\widetilde{\mathcal{G}}(x_{k})\| > \varepsilon\) for all \(k = 0, 1, \cdots, \overline{K}_1\). From~\eqref{eq:dvarphixk}, we have 
\begin{align*}
\varphi(x_0) \!-\! \varphi(x_{\overline{K}_1 + 1}) \!=& \sum_{l=0}^{\overline{K}_1}\varphi(x_l) \!-\! \varphi(x_{l+1}) \!\geq\! \frac{(\gamma \!-\! 1)L_g}{2(\frac{3}{2} \!+\! \beta_0)^2}\sum_{l=0}^{\overline{K}_1}\|\widetilde{\mathcal{G}}(x_k)\|^2\!\geq\!\frac{(\gamma \!-\! 1)L_g(\overline{K}_1\!+\!1)\varepsilon^2}{2(\frac{3}{2} \!+\! \beta_0)^2} \\
>& \varphi(x_0) \!-\! \varphi_*,
\end{align*}
where the last inequality follows from the definition of \(\overline{K}_1\). The above inequality contradicts the definition of \(\varphi_*\). 

If \(\|\widetilde{\mathcal{G}}(x_k)\| \leq \varepsilon\) for some \(x_k\), then from~\eqref{eq:dgtg} and the condition \(\delta^g_k < \frac{1}{2}\), we obtain 
\[
\|\mathcal{G}(x_k)\| \leq (1 + \delta^g_k)\|\widetilde{\mathcal{G}}(x_k)\| \leq \frac{3}{2}\varepsilon.
\]
The desired results are satisfied. 
\end{proof}

\begin{remark}\label{remark:lazygrad}
Assumption~\ref{assume:gkhk} necessitates a reasonably accurate approximation of \(g_k\), as the right-hand side of the inequality depends explicitly on this quantity.
This condition is primarily employed in deriving Inequality~\eqref{eq:dvarphi} to guarantee \(\langle g_k - \nabla f(x_k), x_{k+1} - x_k\rangle \geq -\mathcal{O}(\|x_{k+1} - x_k\|^2)\). When exact gradient computation is feasible, the lazy gradient strategy oulined in Algorithm~\ref{alg:pmmlazyg} complies with the constraints on \(g_k\) stated in Assumption~\ref{assume:gkhk} by noting that \(g_k = \nabla f(x_k)\) when \({\rm flag} = 0\), whereas 
\begin{align*}
\|g_k - \nabla f(x_k)\| =& \|\nabla f(z) - \nabla f(x_k)\| \leq L_g\|z - x_k\| \leq \delta_k^g\|\widetilde{\mathcal{G}}\| = \delta^g_k\|\mathcal{G}(z)\| \\
=& \delta_k^g\|z - {\rm prox}_h(z - \nabla f(z))\| = \delta_k^g\|x_k - {\rm prox}_h(x_k - g_k)\| = \delta^g_k\|\widetilde{\mathcal{G}}(x_k)\|
\end{align*}
when \({\rm flag} = 1\). 
 
\begin{algorithm*}[h!]
\caption{OFF proximal Newton-type method with lazy gradient  (OFF-PNM-lg).}\label{alg:pmmlazyg}
\begin{algorithmic}[1]
\Require{\(L_g\), \(x_0\in{\rm dom}\,h\), \(\{\eta_k\}\subseteq [0, 1)\), \(\gamma > 1\), \(\bar{\delta}^g \in [0, \frac{1}{2})\), \(\{\delta^g_k\}\subseteq [0, \bar{\delta}^g]\), \(\bar{\delta}^h > 0\), and \(\{\delta^h_k\}\subseteq [0, \bar{\delta}^h]\), \(g_0 = \nabla f(x_0)\), \(z = x_0\), \(\widetilde{\mathcal{G}} = \mathcal{G}(z)\), \({\rm flag} = 0\).}
\For{\(k = 0, 1, \ldots, \)}
\State{compute \(Q_k\);}
\If{\(\|x_k - z\| > \frac{\delta_k^g}{L_g}\|\widetilde{\mathcal{G}}\|\)}
\State{\(g_k = \nabla f(x_k)\),~\(z = x_k\),~\(\widetilde{\mathcal{G}} = \mathcal{G}(z)\), \({\rm flag} = 0\);}
\Else
\State{\(g_k = g_{k-1}\), \({\rm flag} = 1\);}
\EndIf
\State{compute \(c_k \!=\! \gamma\frac{L_g \!+\! \eta_k \!+\! 2\delta^g_k(1 \!+\! \frac{\eta_k}{2} \!+\! 2L_g \!+\! 2\delta^h_k)}{1 \!-\! 2\delta^g_k}\) and \(H_k \!=\! Q_k \!+\! ([-\lambda_{\min}(Q_k)]_+ \!+\! c_k)I_n\);}
\State{compute \(x_{k+1} \approx \min_x \varphi_k(x)\) such that there exists \(\xi_k\in\partial \varphi_k(x_{k+1})\) satisfies \(\|\xi_k\| \leq \frac{\eta_k}{2}\|x_{k+1} - x_k\|\).} 
\EndFor\\
\Return{\(\{x_k\}\)}
\end{algorithmic}
\end{algorithm*}
\end{remark}

\subsection{Local convergence of OFF-PNM for convex composite optimization problems}\label{sec:localOFFproximalnewton}

In this section, we establish the local superlinear convergence of Algorithm~\ref{alg:pmm} for convex composite optimization problems.
We suppose that \(f\) and \(g\) satisfy the assumption stated below.
\begin{assumption}\label{assume:g}
\begin{itemize}
\item[(i)] The function \(f: \mathbb{R}^n\to(-\infty, +\infty]\) is convex, twice continuously differentiable on an open set \(\Omega_2\) containing the effective domain \({\rm dom}\,h\) of \(h\). Moreover, \(\nabla f\) is \(L_g\)-Lipschitz continuous on \(\Omega_2\), and \(\nabla^2 f\) is \(L_h\)-Lipschitz continuous on an open neighborhood of \(\omega(x_0)\) with radius \(\epsilon_0\) for some \(\epsilon_0 > 0\). 
\item[(ii)] The function \(h: \mathbb{R}^n\to(-\infty, +\infty]\) is proper convex, nonsmooth, and continuous.
\item[(iii)] For any \(x_0 \in {\rm dom}\,h\), the level set \(\mathcal{L}_{\varphi}(x_0) = \{x\vert \varphi(x) \leq \varphi(x_0)\}\) is bounded. 
\end{itemize}
\end{assumption}
Under Assumption~\ref{assume:g} (i) and (ii), the mapping \(\nabla f(\cdot) + \partial g(\cdot)\) is outer semicontinuous on \({\rm dom}\,h\)~\cite{RW04}. Consequently, the stationary set \(\mathcal{S}^*\) is closed, and \({\rm dist}(x, \mathcal{S}^*)\) is well-defined, where \({\rm dist}(x, \mathcal{S})\) denotes the Euclidean distance from \(x\) to the closed set \(\mathcal{S}\). We further assume that the order-\(q\) H\"olderian error bound condition holds locally around \(\mathcal{S}^*\). 
\begin{assumption}\label{assume:holderian}
For any \(\bar{x} \in \omega(x_0)\), the order-q H\"olderian error bound condition at \(\bar{x}\) relative to \(\mathcal{S}^*\). That is, there exist \(q\in (0, 1]\), \(\epsilon\in(0,1)\), and \(\kappa > 0\) such that 
\[
{\rm dist}(x, \mathcal{S}^*) \leq \kappa\|\mathcal{G}(x)\|^q, \quad \forall x \in \mathbb{B}(\bar{x}, \epsilon), 
\]
where \(\mathbb{B}(\bar{x}, \epsilon)\) denotes for the open Euclidean ball centered at \(\bar{x}\) with radius \(\epsilon\). 
\end{assumption}

Define \(\bar{x}_k = \arg\min_x\{\varphi_k(x)\}\). We first establish error bunds for the pairs \((x_{k+1}, \bar{x})\) and \((x_k, \bar{x})\).

\begin{lemma}\label{lem:dxkp1tobarxk}
 Suppose that Assumptions~\ref{assume:gkhk} with \(\delta^g_k \leq \bar{\delta}^g\min\{1, \|\widetilde{\mathcal{G}}(x_k)\|^{\theta}\}\), \(\delta^h_k \leq \bar{\delta}^h\min\{1, \|\widetilde{\mathcal{G}}(x_k)\|^{\theta}\}\) for some \(\theta \in [0, 1]\), \(\bar{\delta}^g \in[0, \frac{1}{2}]\), and \( \bar{\delta}^h \leq 1\) and~\ref{assume:g} are satisfied.   Let \(\{x_k\}\) be the sequence generated by Algorithm~\ref{alg:pmm} with \(c_k = \bar{c}\min\{1, \|\widetilde{\mathcal{G}}(x_k)\|^{\theta}\}\)  for some \(\bar{c} > 0\) and~\eqref{eq:5} holds with \(\eta_k = \bar{\eta}\min\{1, \|\widetilde{\mathcal{G}}(x_k)\|^{\theta}\}\) for some \(\bar{\eta} > 0\).  Then the following statements hold.
 \begin{itemize}
 \item[(a)] For all \(k \in\mathbb{N}\), 
 \begin{equation*}
        \|x_{k+1} - \bar{x}_k\| \leq \frac{\bar{\eta}}{2\bar{c}}(1 + L_g + \bar{\delta}^h + \bar{c}\min\{1, \|\widetilde{\mathcal{G}}(x_k)\|^{\theta}\})\|x_k - x_{k+1}\|. 
    \end{equation*}
 \item[(b)] Let \(\Pi_{\mathcal{S}^*}(x_k)\) denote the projection set of \(x_k\) onto \(\mathcal{S}^*\).
 If \(x_k \in \mathbb{B}(\bar{x}, \epsilon_0/2)\) for some \(\bar{x}\in\omega(x_0)\), where \(\epsilon_0\) defined in Assumption~\ref{assume:g} (i), then 
    \[
    \|x_k - \bar{x}_k\| \leq \frac{L_h}{2\bar{c}\min\{1, \|\widetilde{\mathcal{G}}(x_k)\|^{\theta}\}}\|x_{k, *} - x_k\|^2 + \frac{2(\bar{\delta}^h + \bar{c})}{\bar{c}}\|x_{k,*} - x_k\|+ \frac{\bar{\delta}^g}{\bar{c}}\|\widetilde{\mathcal{G}}(x_k)\|,
    \]
where \(x_{k,*} \in \Pi_{\mathcal{S}^*}(x_k)\). \end{itemize}
 \end{lemma}
\begin{proof}
(a)    By the definition of \(\bar{x}_k\) and the first-order optimality condition, we obtain 
    \begin{equation}\label{eq:18}
        -g_k - H_k(\bar{x}_k - x_k) \in \partial h(\bar{x}_k). 
    \end{equation}
    Combining \eqref{eq:10} with the monotonicity of \(\partial h\) yields 
    \begin{align*}
0 \leq& \langle x_{k+1} - \bar{x}_k - r_k(x_{k+1}), r_k(x_{k+1}) - H_k(x_{k+1} - \bar{x}_k)\rangle\\
\leq & \langle (x_{k+1} - \bar{x}_k) + H_k(x_{k+1} - \bar{x}_k), r_k(x_{k+1})\rangle -  \langle x_{k+1} - \bar{x}_k, H_k( x_{k+1} - \bar{x}_k)\rangle. 
\end{align*} 
Since \(H_k \succeq c_k I_n\), the above inequality implies  
\[
c_k\|x_{k+1} - \bar{x}_k\| \leq (1 + \|H_k\|)\|r_k(x_{k+1})\| \leq \frac{\eta_k}{2}(1 + \|H_k\|)\|x_k - x_{k+1}\|,
\]
where the last inequality follows from~\eqref{eq:5} and~\eqref{eq:9}. 
The desired conclusion follows from Assumption~\ref{assume:g} and the specific parameter specifications of \(\delta^h_k\), \(c_k\), and \(\eta_k\). 

(b)  For any \(x_k\in\mathbb{B}(\bar{x}, \epsilon_0/2)\), \(\Pi_{\mathcal{S}^*}(x_k) \neq \emptyset\) due to the closedness of  \(\mathcal{S}^*\). Furthermore,  for any \( \forall x_{k, *} \in \Pi_{\mathcal{S}^*}(x_k)\), 
\[
\|x_{k, *} -\bar{x}\| \leq \|x_{k, *} - x_k\| + \|x_k - \bar{x}\| \leq 2\|x_k - \bar{x}\| \leq \epsilon_0,
\]
where the second inequality holds because \(\bar{x} \in\omega(x_0)\subseteq \mathcal{S}^*\). Consequently, \(x_{k, *}\in\mathbb{B}(\bar{x}, \epsilon_0)\), and the line segment \((1- t)x_k + tx_{k, *}\in\mathbb{B}(\bar{x}, \epsilon_0)\cap {\rm dom}\,h\) for all \(t\in[0,1]\). Notice that \(x_{k, *} \in\mathcal{S}^*\), one has \(-\nabla f(x_{k, *}) \in \partial h(x_{k, *})\). Combining this with~\eqref{eq:18} and using the monotonicity of \(\partial h\), we get 
\[
0 \leq \langle x_{k, *} - \bar{x}_k, -\nabla f(x_{k, *}) + g_k + H_k(x_{k, *} - x_k)\rangle + \langle x_{k, *} - \bar{x}_k, H_k(\bar{x}_k - x_{k, *})\rangle.
\]
Applying \(H_k \succeq c_kI_n\) again gives
\begin{align*}
\|\bar{x}_k - x_{k, *}\| \leq\!& \frac{1}{c_k}\| \nabla f(x_k) -\nabla f(x_{k, *})  + H_k(x_{k, *} - x_k) + g_k - \nabla f(x_k)\|\\
=& \frac{1}{c_k}\|\int_0^1[H_k - \nabla^2f(x_k + t(x_{k,*} - x_k))](x_{k, *} - x_k)dt\| + \frac{1}{c_k}\|g_k - \nabla f(x_k)\|\\
\leq & \frac{1}{c_k}\|\int_0^1[\nabla^2f(x_k) - \nabla^2f(x_k + t(x_{k,*} - x_k))](x_{k, *} - x_k)dt\|  \\
&+ \frac{1}{c_k}\|\int_0^1(Q_k - \nabla^2f(x_k))(x_{k,*} - x_k)dt\|\\
&+\frac{1}{c_k}\|\int_0^1\!([-\lambda_{\min}(Q_k)]_+ + c_k)(x_{k,*} - x_k)dt\| + \frac{1}{c_k}\|g_k - \nabla f(x_k)\|\\
\leq & \frac{L_h}{2c_k}\|x_{k, *} - x_k\|^2 + \frac{2\delta^h_k + c_k}{c_k}\|x_{k,*} - x_k\|+ \frac{\delta^g_k}{c_k}\|\widetilde{\mathcal{G}}(x_k)\|\\
\leq& \frac{L_h}{2c_k}\|x_{k, *} - x_k\|^2 + \frac{2\bar{\delta}^h + \bar{c}}{\bar{c}}\|x_{k,*} - x_k\|+ \frac{\bar{\delta}^g}{\bar{c}}\|\widetilde{\mathcal{G}}(x_k)\|,
\end{align*}
where the third inequality relies on Assumption~\ref{assume:gkhk} together with \([-\lambda_{\min}(Q_k)]_+ \leq \delta^h_k\), where the latter following from the convexity of \(f\). The desired inequality then follows from \(\|x_k - \bar{x}_k\| \leq \|x_k - x_{k,*}\| + \|x_{k,*} - \bar{x}_k\|\). 
\end{proof}

\begin{lemma}\label{lem:localqcrconv}
    Suppose that  Assumptions~\ref{assume:g} and~\ref{assume:holderian} hold. Assume further that Assumption~\ref{assume:gkhk} is satisfied with \(\delta^g_k \leq \bar{\delta}^g\min\{1, \|\widetilde{\mathcal{G}}(x_k)\|^{\theta}\}\) and \(\delta^h_k \leq \bar{\delta}^h\min\{1, \|\widetilde{\mathcal{G}}(x_k)\|^{\theta}\}\) for some \(\theta \in [0, q]\), \(\bar{\delta}^g \in[0, \frac{1}{2}]\), \( \bar{\delta}^h \leq 1\), and \(q(1 + \theta) > 1\).  Let \(\{x_k\}\) denote the sequence generated by Algorithm~\ref{alg:pmm}, where \(c_k = \bar{c}\min\{1, \|\widetilde{\mathcal{G}}(x_k)\|^{\theta}\}\) with \(\bar{c} = \frac{\bar{\gamma}\bar{c}_1}{2 - (2\bar{\delta}^g + \bar{\eta})}\) for some \(\bar{\gamma} > 1\), and \(\bar{c}_1 = (2\bar{\delta}^g + \bar{\eta})(1 + L_g + \bar{\delta}^h) + \bar{\delta}^g\bar{\eta}\) for some \(\bar{\eta}\in[0, 1)\). Suppose also that condition~\eqref{eq:5} holds with \(\eta_k = \bar{\eta}\min\{1, \|\widetilde{\mathcal{G}}(x_k)\|^{\theta}\}\). 
 \begin{itemize}   
\item[(a)]    If \(x_k \in \mathbb{B}(\bar{x}, \epsilon_1)\) for some \(\epsilon_1 \leq \min\{\epsilon, \epsilon_0/2\}\), then 
\begin{equation}\label{eq:ddxksc}
\|x_{k+1} - x_{k}\| \leq \bar{c}_2{\rm dist}(x_k, \mathcal{S}^*),
\end{equation} 
where \(\bar{c}_2 =  \frac{L_h \kappa (\delta^g_k)^q}{(\bar{\gamma} - 1)\bar{c}_1} + \frac{4}{\bar{\gamma}-1}(\frac{\bar{\delta}^h}{\bar{c}_1} + \frac{\bar{\gamma}}{2 - (2\bar{\delta}^g + \bar{\eta})})\). 

\item[(b)]  If \(x_k \in \mathbb{B}(\bar{x}, \epsilon_2)\) with \(\epsilon_2 = \frac{\epsilon_1}{1 + \bar{c}_2}\), then we have \(x_{k+1} \in \mathbb{B}(\bar{x}, \epsilon_1)\) and 
\begin{equation}\label{eq:distxs}
{\rm dist}(x_{k+1}, \mathcal{S}^*) \leq \bar{c}_4{\rm dist}(x_k, \mathcal{S}^*)^{q(1 + \theta)},
\end{equation}
where \(\bar{c}_4 = \frac{\kappa\bar{c}_2^{q(1 + \theta)}\bar{c}_3^q}{(1 - \bar{\delta}^g)^q}\) with \(\bar{c}_3 = \frac{L_h}{2}\bar{c}_2^{1-\theta}\kappa^{1-\theta} + \beta_2(1 + \frac{\bar{\eta}}{2}+L_g + 2\bar{\delta}^h + \bar{c})^{\theta}\).

\item[(c)] If \(x_k \in \mathbb{B}(\bar{x}, \epsilon_3)\) with \(\epsilon_3 = \min\{\epsilon_2, (2\bar{c}_4)^{-1/[q(1+\theta) -1]}\}\), then  
\begin{equation}\label{eq:distxkp1}
{\rm dist}(x_{k+1}, \mathcal{S}^*) \leq \frac{1}{2}{\rm dist}(x_k, \mathcal{S}^*). 
\end{equation}
\item[(d)] If \(x_0 \in \mathbb{B}(\bar{x}, \epsilon_4)\) for some \(\epsilon_4 \in (0, \frac{\epsilon_3}{1 + \bar{c}_2}]\), then \(x_k \in \mathbb{B}(\bar{x}, \epsilon_3)\) holds for all \(k\). 
\end{itemize}
\end{lemma}
\begin{proof}
(a) According to Lemma~\ref{lem:dxkp1tobarxk}, for all \(x_k \in \mathbb{B}(\bar{x}, \epsilon_1)\), we have 
\begin{align*}
 \|x_k - x_{k+1}\| \leq& \|x_k - \bar{x}_k\| + \|x_{k+1} - \bar{x}_k\| \\
        \leq& \frac{L_h}{2\bar{c}\min\{1, \|\widetilde{\mathcal{G}}(x_k)\|^{\theta}\}}\|x_{k, *} - x_k\|^2 + \frac{2\bar{\delta}^h + 2\bar{c}}{\bar{c}}\|x_{k,*} - x_k\|+ \frac{\bar{\delta}^g}{\bar{c}}\|\widetilde{\mathcal{G}}(x_k)\| \\
        &+\frac{\bar{\eta}}{2\bar{c}}(1 + L_g + \bar{\delta}^h + \bar{c})\|x_k - x_{k+1}\|\\
        \leq&\frac{L_h}{2\bar{c}\min\{1, \|\widetilde{\mathcal{G}}(x_k)\|^{\theta}\}}\|x_{k, *} - x_k\|^2 + \frac{2\bar{\delta}^h + 2\bar{c}}{\bar{c}}\|x_{k,*} - x_k\| \\
        &+ \frac{\bar{c}_1 + (2\bar{\delta}^g + \bar{\eta})\bar{c}}{2\bar{c}}\|x_k -x_{k+1}\|,
\end{align*}
where the last inequality follows from~\eqref{eq:boundtildegk} and the bound \(\|H_k\| \leq L_g + 2\bar{\delta}^h + \bar{c}\), which holds owing to \([-\lambda_{\min}(Q_k)]_+ \in [0, \bar{\delta}^h]\). For the parameter choice \(\bar{c} = \frac{\bar{\gamma}\bar{c}_1}{2 - (2\bar{\delta}^g + \bar{\eta})}\) with \(\bar{\gamma} > 1\), one further obtains  
\begin{align*}
\|x_k - x_{k+1}\|\leq\!& \frac{L_h}{2(\bar{\gamma} \!-\! 1)\bar{c}_1\min\{1, \|\widetilde{\mathcal{G}}(x_k)\|^{\theta}\}}\|x_{k, *} \!-\! x_k\|^2 \!+\! \frac{4}{\bar{\gamma}\!-\!1}(\frac{\bar{\delta}^h}{\bar{c}_1} \!+\! \frac{\bar{\gamma}}{2 \!-\! (2\bar{\delta}^g \!+\! \bar{\eta})})\|x_{k, *} \!-\! x_k\| \nonumber \\
\leq&(\frac{L_h{\rm dist}(x_k, \mathcal{S}^*) }{(\bar{\gamma} - 1)\bar{c}_1\min\{1, \|\widetilde{\mathcal{G}}(x_k)\|^{\theta}\}} + \frac{4}{\bar{\gamma}-1}(\frac{\bar{\delta}^h}{\bar{c}_1} + \frac{\bar{\gamma}}{2 - (2\bar{\delta}^g + \bar{\eta})})){\rm dist}(x_k, \mathcal{S}^*)\\
\leq&  (\frac{L_h \kappa (\delta^g_k)^q\|\widetilde{\mathcal{G}}(x_k)\|^q}{(\bar{\gamma} - 1)\bar{c}_1\min\{1, \|\widetilde{\mathcal{G}}(x_k)\|^{\theta}\}} + \frac{4}{\bar{\gamma}-1}(\frac{\bar{\delta}^h}{\bar{c}_1} + \frac{\bar{\gamma}}{2 - (2\bar{\delta}^g + \bar{\eta})})){\rm dist}(x_k, \mathcal{S}^*),
\end{align*} 
where the last inequality is derived from Assumption~\ref{assume:holderian} and~\eqref{eq:dgtg}.  
Recall that \(\mathcal{G}(x)\) is continuous and \(\mathcal{G}(\bar{x}) = 0\). Without loss of generality, we assume \(\|\mathcal{G}(x)\| \leq 1-\bar{\delta}^g\) for any \(x \in \mathbb{B}(\bar{x}, \epsilon_1)\). Combining this condition with~\eqref{eq:dgtg}, we deduce \(\|\widetilde{\mathcal{G}}(x_k)\| \leq \frac{1}{1 - \bar{\delta}^g}\|\mathcal{G}(x_k)\| \leq 1\) for all \(x_k \in  \mathbb{B}(\bar{x}, \epsilon_1)\).  Consequently, if \(q - \theta \geq 0\), it follows that  
\[
\|x_k \!-\! x_{k+1}\|\!\leq\! (\frac{L_h \kappa (\delta^g_k)^q}{(\bar{\gamma} \!-\! 1)\bar{c}_1}\|\widetilde{\mathcal{G}}(x_k)\|^{q\!-\!\theta} \!+\! \frac{4}{\bar{\gamma}\!-\!1}(\frac{\bar{\delta}^h}{\bar{c}_1} \!+\! \frac{\bar{\gamma}}{2 \!-\! (2\bar{\delta}^g \!+\! \bar{\eta})})){\rm dist}(x_k, \mathcal{S}^*) \!\leq\! \bar{c}_2{\rm dist}(x_k, \mathcal{S}^*).
\]

(b) According to the definitions of \(\mathcal{G}(x_{k+1})\), \(r_k(x_{k+1})\) and the nonexpansivity of \({\rm prox}_h\), we obtain 
\begin{align*}
\|\widetilde{\mathcal{G}}(x_{k\!+\!1}) \!-\! r_k(x_{k\!+\!1})\|  \!\leq& \|g_{k+1}- g_k - H_k(x_{k+1} - x_k)\|\\
    \!\leq&\|\nabla f(x_{k+1}) \!-\! \nabla f(x_k) \!-\! \nabla^2f(x_k)(x_{k+1} \!-\! x_k)\| \!+\! \|g_{k+1} \!-\! \nabla f(x_{k+1})\| \\
    \!&\!+\! \|g_k \!-\! \nabla\! f(x_k)\| \!+\! \|(\nabla^2\!f(x_k) \!-\! Q_k \!-\! ([\!-\!\lambda_{\min}(Q_k)]_{\!+\!} \!+\! c_k)I_n)(x_{k\!+\!1} \!-\! x_k)\| \\
    \!\leq& \frac{L_h}{2}\|x_k \!-\! x_{k\!+\!1}\|^2 \!+\! \delta^g_{k\!+\!1}\|\widetilde{\mathcal{G}}(x_{k\!+\!1})\| \!+\! \delta^g_{k}\|\widetilde{\mathcal{G}}(x_{k})\| \!+\! (2\delta^h_k + c_k)\|x_k - x_{k\!+\!1}\|\\
    \!\overset{\eqref{eq:boundtildegk}}{\leq}& \frac{L_h}{2}\|x_k - x_{k+1}\|^2 + \delta^g_{k+1}\|\widetilde{\mathcal{G}}(x_{k+1})\| \\
    \!&+ (\delta^g_{k}(1 + \frac{\eta_k}{2} + L_g + 2\delta^h_k + c_k) + (2\delta^h_k + c_k))\|x_k - x_{k+1}\|.
\end{align*}
It then follows that
\begin{align*}
    \|\widetilde{\mathcal{G}}(x_{k+1})\| \leq& \|\widetilde{\mathcal{G}}(x_{k+1}) - r_k(x_{k+1})\| + \|r_k(x_{k+1})\|\\
    \overset{\eqref{eq:9}}{\leq}& \frac{L_h}{2}\|x_k - x_{k+1}\|^2 + \delta^g_{k+1}\|\widetilde{\mathcal{G}}(x_{k+1})\| + \frac{\eta_k}{2}\|x_k - x_{k+1}\|\\
    &+ (\bar{\delta}_k^g(\frac{3}{2} + L_g + 2\bar{\delta}^h + c_k) + (2\bar{\delta}_k^h + c_k))\|x_k - x_{k+1}\| \\
    \leq& \frac{L_h}{2}\|x_k - x_{k+1}\|^2 + \bar{\delta}^g\|\widetilde{\mathcal{G}}(x_{k+1})\| + \beta_2\min\{1, \|\widetilde{\mathcal{G}}(x_k)\|^{\theta}\}\|x_k - x_{k+1}\|,
\end{align*}
where \(\beta_2 = \bar{\delta}^g(\frac{3}{2} + L_g + 2\bar{\delta}^h + \bar{c}) + 2\bar{\delta}^h + \bar{c}\), which yields   
    \begin{align*}
    \|\widetilde{\mathcal{G}}(x_{k+1})\| \leq& \frac{1}{1 - \bar{\delta}^g}\left[\frac{L_h}{2}\|x_k - x_{k+1}\|^2 + \beta_2\min\{1, \|\widetilde{\mathcal{G}}(x_k)\|^{\theta}\}\|x_k - x_{k+1}\|\right]\\
    \overset{\eqref{eq:boundtildegk},~\eqref{eq:ddxksc}}{\leq}\!\!&\!\!\frac{1}{1 - \bar{\delta}^g}\!\left[\frac{L_h}{2}\bar{c}_2^{1-\theta}{\rm dist}^{1 - \theta}(x_k, \mathcal{S}^*) + \beta_2(1 + \frac{\bar{\eta}}{2}+L_g + 2\bar{\delta}^h + \bar{c})^{\theta}\right]\|x_k \!-\! x_{k+1}\|^{1 \!+\! \theta}\\
{\leq}&\frac{\bar{c}_3}{1 - \bar{\delta}^g}\|x_k - x_{k+1}\|^{1 + \theta},
    \end{align*}
where  the last inequality relies on Assumption~\ref{assume:holderian} and the bound \(\|\mathcal{G}(x_k)\| \leq 1\). 
    
From relation~\eqref{eq:ddxksc}, one has
\[
{\rm dist}(x_{k+1}, \mathcal{S}^*) \leq \|x_{k+1} - \bar{x}\| \leq \|x_{k+1} - x_k\| + \|x_k - \bar{x}\| \leq (1 + \bar{c}_2)\|x_k - \bar{x}\| \leq \epsilon_1,
\]
where the last inequality holds since \(x_k \in \mathbb{B}(\bar{x}, \epsilon_2)\). 
Consequently,  
\[
{\rm dist}(x_{k\!+\!1}, \mathcal{S}^*) \!\leq\! \kappa\|\mathcal{G}(x_{k\!+\!1})\|^q \!\leq\! \frac{\kappa\bar{c}_3^q}{(1 - \bar{\delta}^g)^q}\|x_k - x_{k\!+\!1}\|^{q(1 \!+\! \theta)} \!\overset{\eqref{eq:ddxksc}}{\leq}\! \frac{\kappa\bar{c}_2^{q(1 \!+\! \theta)}\bar{c}_3^q}{(1 - \bar{\delta}^g)^q}{\rm dist}(x_k, \mathcal{S}^*)^{q(1 \!+\! \theta)}.
\]

(c) Inequality \eqref{eq:distxkp1} follows directly from~\eqref{eq:distxs} and the definition of \(\epsilon_3\). 

(d) We proceed by mathematical induction. For \(k = 1\), 
\[
\|x_1 - \bar{x}\| \leq \|x_1 - x_0\| + \|x_0 - \bar{x}\| \overset{\eqref{eq:ddxksc}}{\leq} (1 + \bar{c}_2)\|x_0 - \bar{x}\| \leq (1 + \bar{c}_2)\epsilon_4 \leq \epsilon_3,
\]
which gives \(x_1 \in \mathbb{B}(\bar{x}, \epsilon_3)\). 

Assume that for some integer \(k > 0\), \(x_l \in \mathbb{B}(\bar{x}, \epsilon_3)\) holds for all \(l = 1, \ldots, k\). By~\eqref{eq:distxkp1}, we have 
\begin{equation}\label{eq:distxl}
{\rm dist}(x_l, \mathcal{S}^*) \leq \frac{1}{2}{\rm dist}(x_{l-1}, \mathcal{S}^*) \leq \cdots \leq \frac{1}{2^l}{\rm dist}(x_0, \mathcal{S}^*) \leq \frac{1}{2^l}\epsilon_4. 
\end{equation}
Accordingly, 
\[
\|x_{l+1} - x_l\| \overset{\eqref{eq:ddxksc}}{\leq} \bar{c}_2{\rm dist}(x_l, \mathcal{S}^*) \leq \frac{1}{2^l}\bar{c}_2\epsilon_4, 
\]
which yields  
\begin{align*}
\|x_{k+1} - \bar{x}\| = & \|x_1 + \sum_{l=1}^k(x_{l+1} - x_l) - \bar{x}\| \leq \|x_1 - \bar{x}\| + \sum_{l=1}^k\|x_{l+1} - x_l\| \\
\leq& (1 + \bar{c}_2)\epsilon_4 + \bar{c}_2\epsilon_4(\sum_{l=1}^k\frac{1}{2^l})\leq (1 + 2\bar{c}_2)\epsilon_4 \leq \epsilon_3. 
\end{align*}
\end{proof}

\begin{theorem}\label{th:localqcrconv}
    Suppose Assumptions~\ref{assume:gkhk} with \(\delta^g_k \leq \bar{\delta}^g\min\{1, \|\widetilde{\mathcal{G}}(x_k)\|^{\theta}\}\), \(\delta^h_k \leq \bar{\delta}^h\min\{1, \|\widetilde{\mathcal{G}}(x_k)\|^{\theta}\}\) for some \(\theta \in [0, q]\), \(\bar{\delta}^g \in[0, \frac{1}{2}]\), \( \bar{\delta}^h \leq 1\), and \(q(1 + \theta) > 1\), Assumptions~\ref{assume:g} and~\ref{assume:holderian} are satisfied.  Let \(\{x_k\}\) be the sequence generated by Algorithm~\ref{alg:pmm} with \(c_k = \bar{c}\min\{1, \|\widetilde{\mathcal{G}}(x_k)\|^{\theta}\}\) with \(\bar{c} = \frac{\bar{\gamma}\bar{c}_1}{2 - (2\bar{\delta}^g + \bar{\eta})}\) for some \(\bar{\gamma} > 1\), \(\bar{c}_1 = (2\bar{\delta}^g + \bar{\eta})(1 + L_g + \bar{\delta}^h) + \bar{\delta}^g\bar{\eta}\) for some \(\bar{\eta}\in[0, 1)\),~\eqref{eq:5} holds with \(\eta_k = \bar{\eta}\min\{1, \|\widetilde{\mathcal{G}}(x_k)\|^{\theta}\}\), and \(x_0 \in \mathbb{B}(\bar{x}, \epsilon_4)\) for some \(\bar{x}\in \omega(x_0)\), where \(\epsilon_4\) is defined as in Lemma~\ref{lem:localqcrconv} (d). Then \(\{x_k\}\) is convergent, and \(\{{\rm dist}(x_k, \mathcal{S}^*)\}\) converges to \(0\) with order \(q(1 + \theta)\).  
\end{theorem}
\begin{proof}
Notice that for any \(t, s \in \mathbb{N}\) with \(t \geq s\), 
\[
\|x_t - x_s\| \leq \sum_{l = s}^{t-1}\|x_{l+1} - x_l\| \overset{\eqref{eq:ddxksc}}{\leq} \bar{c}_2\sum_{l = s}^{t-1}{\rm dist}(x_l, \mathcal{S}^*) \overset{\eqref{eq:distxl}}{\leq} \bar{c}_2\epsilon_4\sum_{l = s}^{t-1}\frac{1}{2^l} \leq \frac{\bar{c}_2\epsilon_4}{2^{s+1}}.
\] 
Thus, \(\{x_k\}\) forms a Cauchy sequence. Furthermore, \(\{{\rm dist}(x_k, \mathcal{S}^*)\}\) converges to \(0\) at the order \(q(1 + \theta)\) by  Lemma~\ref{lem:localqcrconv} (b). 
\end{proof}
This superlinear convergence result is consistent with that of the proximal Newton method developed in~\cite[Theorem 4.2]{MYZZ22} under exact derivative information and function evaluations.

\section{OEF regularized Newton method}\label{sec:OFFrnm}

It is observed that Algorithm~\ref{alg:pmm} reduces to an inexact regularized Newton method with approximate gradients and Hessians for Problem~\eqref{eq:ucfun} when \(h(x) \equiv 0\). 

\subsection{OFF regularized Newton method for Problem~\eqref{eq:ucfun}}\label{sec:offrnm2}

We assume that the solution set of Problem~\eqref{eq:ucfun} is nonempty,   denote \(f_*\) as the optimal function value, and suppose that  Assumption~\ref{assume:ncp} holds for \(h(x) \equiv 0\). 

\subsubsection{OFF regularized Newton method for nonconvex \(f\)}\label{subsec:offrnmnonconv}

We note that \(\widetilde{G}(x_k) = g_k\) when \(h(x) \equiv 0\). Accordingly, Assumption~\ref{assume:gkhk} can be restated as follows. 

\begin{assumption}\label{assume:gkuc}
The approximate gradient \(g_k\) and Hessian matrix \(Q_k\) at the \(k\)-iteration satisfy
    \begin{equation*}
    \|g_k - \nabla f(x_k)\| \leq \delta^g_k\|g_k\| \quad {\rm and}\quad \|Q_k - \nabla^2 f(x_k)\| \leq \delta^h_k,
\end{equation*}
where \(\delta^g_k \in [0, \bar{\delta}^g]\) with \(\bar{\delta}^g < \frac{1}{2}\), and \(\delta^h_k \in (0, \bar{\delta}^h]\) with \(\bar{\delta}^h > 0\).
\end{assumption}

We present the OFF-RNM method for Problem~\eqref{eq:ucfun} in Algorithm~\ref{alg:rnm}. 
\begin{algorithm*}[h!]
\caption{OFF regularized Newton method (OFF-RNM).}\label{alg:rnm}
\begin{algorithmic}[1]
\Require{\(L_g\), initial point \(x_0\in \mathbb{R}^n\), sequence \(\{\eta_k\}\subseteq [0, 1)\), \(\gamma > 1\), \(\bar{\delta}^g \in [0, \frac{1}{2})\), \(\{\delta^g_k\}\subseteq [0, \bar{\delta}^g]\), \(\bar{\delta}^h > 0\), and \(\{\delta^h_k\}\subseteq [0, \bar{\delta}^h]\).}
\For{\(k = 0, 1, \ldots, \)}
\State{Compute approximate gradient \(g_k\) and approximate Hessian \(Q_k\);}
\State{set \(c_k = \gamma\frac{L_g + \eta_k + 2\delta^g_k(\frac{\eta_k}{2} + 2L_g + 2\delta^h_k)}{1 - 2\delta^g_k}\) and define \(H_k = Q_k + ([-\lambda_{\min}(Q_k)]_+ + c_k)I_n\);}
\State{Compute \(x_{k+1} =  x_k + d_k\), where the direction \(d_k\) satisfies}
\begin{equation}\label{eq:dk}
\|H_k d_k + g_k\| \leq \frac{\eta_k}{2}\|d_k\|. 
\end{equation}
\EndFor\\
\Return{\(\{x_k\}\)}
\end{algorithmic}
\end{algorithm*}

The following lemma shows that condition~\eqref{eq:dk} can be satisfied when the conjugate gradient (CG) method is employed to solve the linear system \(H_kd = -g_k\).  
\begin{lemma}\label{lem:cg}
Suppose that  the CG method is employed to solve the linear system \(H_kd = -g_k\). Let \(\{d_{k,j}\}\) denote the sequence generated by the CG iteration and define \(r_{k,j} := H_kd_{k,j} + g_k\). Then the iterate \(d_{k,j_*}\) satisfies condition~\eqref{eq:dk} provided that \(\|r_{k,j_*}\| \leq \varepsilon^{cg}_k\|g_k\|\), where \(\varepsilon^{cg}_k = \frac{\eta_k/2}{2L_g + 2\delta^h_k + c_k + \eta_k/2}\). 
\end{lemma}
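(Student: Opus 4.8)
The plan is to obtain \eqref{eq:dk} directly from the termination inequality \(\|r_{k,j_*}\| \le \varepsilon^{cg}_k\|g_k\|\), the defining identity \(r_{k,j_*} = H_kd_{k,j_*} + g_k\), and the spectral-norm bound \(\|H_k\| \le 2L_g + 2\delta^h_k + c_k =: M_k\) (which follows from \eqref{eq:nqk} and the definition of \(H_k\), since \([-\lambda_{\min}(Q_k)]_+ \le \|Q_k\|\)). No property of CG beyond ``\(r_{k,j}\) is the residual of \(d_{k,j}\)'' will actually be used in the estimate; CG enters only to justify that such an index \(j_*\) exists and is reachable — because \(H_k \succ 0\), CG initialized at \(d_{k,0}=0\) (so that \(r_{k,0}=g_k\) and \(\|r_{k,0}\|=\|g_k\|\)) produces residuals that vanish in at most \(n\) iterations, hence \(\|r_{k,j}\| \le \varepsilon^{cg}_k\|g_k\|\) holds for some \(j_* \le n\).

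First I would dispose of the trivial case \(g_k=0\), where \(d_k=d_{k,j_*}=0\) already satisfies \eqref{eq:dk}. For \(g_k\neq0\), writing \(g_k = r_{k,j_*} - H_kd_{k,j_*}\) and combining the triangle inequality with the stopping rule gives
\[
\|g_k\| \;\le\; \|r_{k,j_*}\| + \|H_k\|\,\|d_{k,j_*}\| \;\le\; \varepsilon^{cg}_k\|g_k\| + M_k\|d_{k,j_*}\|.
\]
Since \(\varepsilon^{cg}_k = \dfrac{\eta_k/2}{M_k + \eta_k/2} \in (0,1)\), this rearranges to the lower bound \(\|d_{k,j_*}\| \ge \dfrac{1-\varepsilon^{cg}_k}{M_k}\,\|g_k\|\) (in particular \(d_{k,j_*}\neq0\)). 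Feeding this back into the stopping rule,
\[
\|H_kd_{k,j_*} + g_k\| \;=\; \|r_{k,j_*}\| \;\le\; \varepsilon^{cg}_k\|g_k\| \;\le\; \frac{\varepsilon^{cg}_k M_k}{1-\varepsilon^{cg}_k}\,\|d_{k,j_*}\|,
\]
and the elementary identity \(\dfrac{\varepsilon^{cg}_k M_k}{1-\varepsilon^{cg}_k} = \dfrac{\eta_k}{2}\) — which is precisely how \(\varepsilon^{cg}_k\) was chosen — closes the argument: \(d_k := d_{k,j_*}\) satisfies \eqref{eq:dk}.

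There is no genuine obstacle here; the whole proof is a two-line norm estimate. The only points deserving a line of care are that \(\varepsilon^{cg}_k < 1\) (needed to legitimately rearrange into the lower bound on \(\|d_{k,j_*}\|\)), that replacing \(\|H_k\|\) by its upper bound \(M_k\) is valid because \(t\mapsto \varepsilon^{cg}_k t/(1-\varepsilon^{cg}_k)\) is increasing on \(t\ge 0\) for \(\varepsilon^{cg}_k\in(0,1)\), and the separate bookkeeping for the degenerate case \(g_k=0\).
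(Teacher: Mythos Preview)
Your proof is correct and follows essentially the same approach as the paper: both use the triangle inequality on \(g_k = r_{k,j_*} - H_kd_{k,j_*}\) together with the bound \(\|H_k\| \le 2L_g + 2\delta^h_k + c_k\), then rearrange using \(\varepsilon^{cg}_k < 1\) to obtain \(\|r_{k,j_*}\| \le \frac{\varepsilon^{cg}_k M_k}{1-\varepsilon^{cg}_k}\|d_{k,j_*}\| = \frac{\eta_k}{2}\|d_{k,j_*}\|\). Your version is slightly more careful in flagging the degenerate case \(g_k=0\) and the reachability of \(j_*\), but the core argument is identical.
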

\begin{proof}
The statement holds by noting that  
\begin{align*}
\|r_{k,j_*}\| \leq& \varepsilon^{cg}_k\|g_k\| \leq \varepsilon^{cg}_k(\|H_kd_{k,j_*}\| + \|r_{k,j_*}\|) \leq \varepsilon^{cg}_k((2L_g + 2\delta^h_k + c_k)\|d_{k,j_*}\| + \|r_{k,j_*}\|),
\end{align*}
which implies
\[
\|r_{k,j_*}\| \leq \frac{\varepsilon^{cg}_k}{1 - \varepsilon^{cg}_k}(2L_g + 2\delta^h_k + c_k)\|d_{k,j_*}\| = \frac{\eta_k}{2}\|d_{k,j_*}\|. 
\] 
\end{proof}

Similar to Lemmas~\ref{lem:tildegk},~\ref{lem:dvarphi} and Theorem~\ref{th:dxksumable}, the following statements hold. 
\begin{theorem}\label{th:dxksumablecf}
    Suppose Assumptions~\ref{assume:ncp} with \(h(x) \equiv0\) and~\ref{assume:gkuc} are satisfied. Let \(\{x_k\}\) be the sequence generated by Algorithm~\ref{alg:rnm}. Then the following statements hold.
 \begin{itemize}
 \item[(a)] For all \(k \in \mathbb{N}\), 
 \begin{equation}\label{eq:uppergknewton}
 \|g_k\| \leq (\frac{\eta_k}{2} + \|H_k\|)\|x_{k+1} - x_k\|;
 \end{equation}
 \item[(b)] For all \(k \in \mathbb{N}\), 
 \begin{equation}\label{eq:df}
 f(x_k) \!\!\geq\!\! f(x_{k\!+\!1}) \!+\! \frac{1}{2}\langle x_{k\!+\!1} \!-\! x_k, (H_k \!-\! (L_g \!+\! \eta_k)I_n)(x_{k\!+\!1} \!-\! x_k)\rangle \!+\! \langle g_k \!-\! \nabla f(x_k), x_{k\!+\!1} \!-\! x_k\rangle;
 \end{equation} 
 \item[(c)] Let \(\beta_0\) be defined as in Theorem~\ref{th:dxksumable} (b) , and define 
 \[
 \overline{K}_2 = \lceil \frac{2(\frac{1}{2}+{\beta}_0^2)(f(x_0) - f_*)}{(\gamma - 1)L_g\varepsilon^{2}}\rceil.
 \]
 Then Algorithm~\ref{alg:rnm} terminates within at most \(\overline{K}_2\) iterations at an iterate \(x_k\) such that \(\|\nabla f(x_k)\| \leq \frac{3}{2}\varepsilon\).
 \end{itemize}
\end{theorem}

\begin{remark}\label{remark:linesearchalg3}
As discussed in Remark~\ref{remark:lazygrad}, the lazy gradient strategy satisfies the accuracy requirement of \(g_k\) in Assumption~\ref{assume:gkuc} when exact gradient computation is permissible. Specifically, set \(g_0 = \nabla f(x_0)\), \(z = x_0\), and \(\tilde{g} = g_0\). If \(\|x_k - z\| > \frac{\delta_k^g}{L_g}\|\tilde{g}\|\), then update \((g_k, z, \tilde{g})\) via \(g_k = \nabla f(x_k)\), \(z = x_k\), and \(\tilde{g} = g_k\); otherwise, set \(g_k = g_{k-1}\).  
\end{remark}

\subsubsection{Local convergence of the OFF regularized Newton method for strongly convex Problem~\eqref{eq:ucfun}}\label{subsec:offrnmconv}

To analyze the local convergence of the OFF regularized Newton method, we impose the following basic assumption on Problem~\eqref{eq:ucfun}. 
\begin{assumption}\label{assume:basicf}
The function \(f: \mathbb{R}^n \to (-\infty, +\infty]\) is twice continuously differentiable and \(\sigma\)-strongly convex, and let \(x^* = \arg\min_xf(x)\). There exists a constant \(\hat{\epsilon}_0 > 0\) such that 
\(\nabla f\) is \(L_g\)-Lipschitz continuous and \(\nabla^2 f\) is \(L_h\)-Lipschitz continuous on an open neighborhood containing \(\mathbb{B}(x^*, \hat{\epsilon}_0)\).
\end{assumption}

Under Assumption~\ref{assume:basicf}, the following inequality holds for all \(x\in\mathbb{R}^n\):   
\begin{equation}\label{eq:errb}
\|x - x^*\| \leq \frac{1}{\sigma} \|\nabla f(x)\|, \quad \forall x\in\mathbb{R}^n. 
\end{equation}
Furthermore, for any \(x \in \mathbb{B}(x^*, \hat{\epsilon}_0)\), \(\|\nabla f(x)\| \leq U_g\) is uniformly bounded by some constants \(U_g > 0\) and \(\|\nabla^2f(x)\| \leq L_g\). 

\begin{assumption}\label{assum:gq}
The approximate gradient \(g_k\) and Hessian \(Q_k\) satisfy
    \[
    \|g_k - \nabla f(x_k)\| \leq \delta^g_k\|g_k\|  \quad {\rm and}\quad \|Q_k - \nabla^2 f(x_k)\| \leq \delta^h_k,
\]
where \(\delta^g_k \in [0, \min\{\bar{\delta}^g, \|g_k\|^{\theta}\}]\) with \(\bar{\delta}^g < 1\) and \(\theta \in [0, 1]\), and \(\delta^h_k \in (0, \min\{\bar{\delta}^h, \|g_k\|^{\theta}\}]\) with \(\bar{\delta}^h\in(0, \sigma)\). 
\end{assumption}

Under Assumption~\ref{assum:gq}, for any \(x_k \in\mathbb{B}(x^*, \hat{\epsilon}_0)\), we have \(\|Q_k\| \leq \|\nabla^2f(x_k)\| + \delta^h_k \leq L_g + \delta^h_k\). In addition,  
\begin{subequations}\label{eq:assumegqsc}
\begin{align}
    &\|\nabla f(x_k) \| \leq (1 + \delta^g_k)\|g_k\|; \label{eq:dgtgsc}\\
   & \|g_k\| \leq \frac{1}{1 - \delta^g_k}\|\nabla f(x_k)\| \leq \frac{1}{1 - \bar{\delta}^g}U_g;, \label{eq:uppergk}\\
   &\lambda_{\min}(Q_k) \geq \lambda_{\min}(\nabla^2f(x_k)) - \delta_k^h \geq \sigma - \delta^h_k > 0; \label{eq:qkl}\\
 &\|Q_k^{-1}\| = \frac{1}{\lambda_{\min}(Q_k)} \leq  \frac{1}{\sigma - \delta^h_k}. \label{eq:invhkupper}
\end{align}
\end{subequations}
Inequality \eqref{eq:qkl} indicates that \(Q_k\) is positive definite. Accordingly, we substitute \(Q_k\) for \(H_k\) in Algorithm~\ref{alg:rnm} and compute the search direction \(d_k\) by solving the linear system \(Q_kd = -g_k\). The OFF Newton-type method for strongly convex optimization is summarized in Algorithm~\ref{alg:rnmsc}. 

\begin{algorithm*}[h!]
\caption{OFF Newton-type method for strongly convex optimization (OFF-Newton-I).}\label{alg:rnmsc}
\begin{algorithmic}[1]
\Require{\(\sigma\), initial point \(x_0\), \(\theta \in [0, 1]\), \(\bar{\delta}^g \in (0, 1)\), \(\bar{\delta}^h \in (0, \sigma)\).} 
\For{\(k = 0, 1, \ldots, \)}
\State{Compute approximate gradient \(g_k\) and approximate Hessian \(Q_k\);}
\State{Set \(\mu_k = \min\{1, \frac{\sigma - \delta^h_k}{4\|g_k\|^{\theta}}\}\);}
\State{Compute the next iterate \(x_{k+1} =  x_k + d_k\), where the search direction \(d_k\) satisfies}
\begin{equation}\label{eq:dksc}
\|r_k\| \leq \frac{\mu_k}{2}\|g_k\|^{\theta}\|d_k\| \quad {\rm with}\quad r_k = Q_k d_k + g_k.
\end{equation}
\EndFor\\
\Return{\(\{x_k\}\)}
\end{algorithmic}
\end{algorithm*}

\begin{remark} 
Here we present several remarks on Algorithm~\ref{alg:rnmsc}. 
\begin{itemize}
\item[(a)] The only distinction between~\eqref{eq:dksc} and~\eqref{eq:dk} is that \(\eta_k\) on the right-hand side of~\eqref{eq:dk} is replaced by \(\mu_k\|g_k\|^{\theta}\). 
By Lemma~\ref{lem:cg}, condition~\eqref{eq:dksc} is satisfied if the CG method is adopted to solve the linear system \(Q_kd = -g_k\) with the stopping criterion \(\|r_{k, j}\| \leq \frac{\mu_k\|g_k\|^{\theta}/2}{2L_g + 2\delta^h_k + \mu_k\|g_k\|^{\theta}/2}\|g_k\|\), where \(r_{k,j} = Q_kd_{k, j} + g_k\), and \(\{d_{k,j}\}_{j\in\mathbb{N}}\)  denotes the sequence generated by the CG method. The explicit appearance of \(\|g_k\|^{\theta}\) on the right-hand side of~\eqref{eq:dksc} is essential for the  subsequent analysis of the \(1 + \theta\)-order convergence rate of the iterative sequence \(\{x_k\}\) generated by Algorithm~\ref{alg:rnmsc}. 

\item[(b)] As addressed in Remark~\ref{remark:lazygrad}, the lazy gradient strategy satisfies the accuracy requirement on \(g_k\) specified in Assumption~\ref{assum:gq} when exact gradient evaluations are computationally permissible. Specifically, initialize \(g_0 = \nabla f(x_0)\), \(z = x_0\), and \(\tilde{g} = g_0\). If \(\|x_k - z\| > \frac{\delta_k^g}{L_g}\|\tilde{g}\|\), then update \((g_k, z, \tilde{g})\) via \(g_k = \nabla f(x_k)\), \(z = x_k\), and \(\tilde{g} = g_k\); otherwise, set \(g_k = g_{k-1}\).   

\item[(c)] To facilitate the theoretical analysis, we assume that the strong convexity parameter \(\sigma\) is known in Algorithm~\ref{alg:rnmsc} and set \(\bar{\delta}^h < \sigma\). This condition guarantees the positive definiteness of \(Q_k\). When \(\sigma\) is unavailable, \(Q_k\) can be substituted with \(H_k = Q_k + [-\lambda_{\min}(Q_k)]_+ + c_kI_n\), where \(c_k = \bar{c}\|g_k\|^{\theta}\) for a constant \(\bar{c} > 0\). 
\end{itemize}
\end{remark}

\begin{lemma}\label{lem:tildegkf}
    Let \(\{x_k\}\) be the sequence generated by Algorithm~\ref{alg:rnmsc}. Then 
    \begin{equation}\label{eq:boundtildegksc}
        \|g_k\| \leq (\|Q_k\| + \frac{\mu_k}{2}\|g_k\|^{\theta})\|d_k\|.
    \end{equation}
\end{lemma}
\begin{proof}
The desired result follows from  
    \begin{align*}
    \|g_k\| \leq \|g_k - r_k\| + \|r_k\| \overset{\eqref{eq:dksc}}{\leq} (\|Q_k\| + \frac{\mu_k}{2}\|g_k\|^{\theta})\|d_k\|.
    \end{align*}
\end{proof}

\begin{lemma}\label{lem:upperdk}
Suppose Assumptions~\ref{assume:basicf} and~\ref{assum:gq} with \(\delta^g_k \!\in\! [0, \min\{\bar{\delta}^g\!, \|g_k\|^{\theta}\!, \frac{\sigma - \delta^h_k}{2(L_g \!+\! \delta_k^h)}\}]\) are satisfied. 
\begin{itemize}
\item[(a)] If \(x_k \in \mathbb{B}(x^*, \hat{\epsilon}_0)\), then  
\begin{equation}\label{eq:ndk}
\|d_k\| \leq \varsigma_0\|x_k - x^*\|,
\end{equation}
where \(\varsigma_0 = \frac{2(L_h\hat{\epsilon}_0 + 2L_g)}{\sigma - \bar{\delta}^h}\). 
\item[(b)] If \(x_k \in \mathbb{B}(x^*,  \hat{\epsilon}_1)\) with \(\hat{\epsilon}_1 = \frac{\hat{\epsilon}_0}{1 + \varsigma_0}\), then  \(x_{k+1}\in \mathbb{B}(x^*, \hat{\epsilon}_0)\) and 
\begin{equation}\label{eq:dxxstarrnm}
\|x_{k+1}  - x^*\| \leq \varsigma_2\|x_k - x^*\|^{1 + \theta}, 
\end{equation}
where \(\varsigma_2 = \frac{1 + \bar{\delta}^g}{\sigma}(\frac{L_h\varsigma_0^2\hat{\epsilon}_1^{1- \theta}}{2(1 - \bar{\delta}^g)} + \varsigma_1\varsigma_0^{1 + \theta})\) with \(\varsigma_1 = \frac{1}{1 - \bar{\delta}^g}(L_g + \sigma + \frac{U_g^{\theta}}{2(1 - \bar{\delta}^g)^{\theta}})^{\theta}\).
\item[(c)] If \(x_k \in \mathbb{B}(x^*, \hat{\epsilon}_2)\) with \(\hat{\epsilon}_2 = \min\{\hat{\epsilon}_1, (2\varsigma_2)^{-1/\theta}\}\), then 
\begin{equation}\label{eq:dxk}
\|x_{k+1}  - x^*\| \leq \frac{1}{2}\|x_k - x^*\|. 
\end{equation}
\item[(d)] If \(x_0 \in \mathbb{B}(x^*, \hat{\epsilon}_3)\) for some \(\hat{\epsilon}_3 \in (0, \frac{\hat{\epsilon}_2}{1 + 2\varsigma_0}]\), then  \(x_k \in \mathbb{B}(x^*, \hat{\epsilon}_2)\) for all \(k\). 
\end{itemize}
\end{lemma}
\begin{proof}
(a) Under Assumption~\ref{assume:basicf}, we have  
 \begin{align*}
 \|d_k\| =& \|Q_k^{-1}(r_k - g_k)\| \nonumber\\
 \leq &||Q_k^{-1}(\nabla f(x^*) - \nabla f(x_k) + \nabla^2f(x_k)(x_k - x^*))\| +\| Q_k^{-1}(\nabla f(x_k) - g_k)\| \nonumber\\
 & + \|Q_k^{-1}r_k\|+ \|Q_k^{-1}(- \nabla^2f(x_k)(x_k - x^*))\| \nonumber \\
 \leq& \frac{L_h}{2}\|Q_k^{-\!1}\|\|x_k \!-\! x^*\|^2 \!+\! \delta^g_k\|Q_k^{-\!1}\|\|g_k\| \!+\! \frac{\mu_k}{2}\|Q_k^{-\!1}\|\|g_k\|^{\theta}\|d_k\| \!+\! \|Q_k^{-\!1}\nabla^2\!f(x_k)\|\|x_k \!-\! x^*\|.
 \end{align*}
Combining~\eqref{eq:invhkupper} and~\eqref{eq:boundtildegksc} yields 
 \begin{align}\label{eq:dkut}
\|d_k\| \!\leq\! & \frac{L_h}{2(\sigma \!-\! \delta^h_k)}\|x_k \!-\! x^*\|^2 \!+\! \frac{\delta^g_k}{\sigma \!-\! \delta^h_k}(\|H_k\| \!+\! \frac{\mu_k}{2}\|g_k\|^{\theta})\|d_k\|  \!+\! \frac{\mu_k\|g_k\|^{\theta}}{2(\sigma \!-\! \delta^h_k)}\|d_k\| \!+\! \frac{L_g}{\sigma \!-\! \delta^h_k}\|x_k \!-\! x^*\| \nonumber \\
\! \leq\!& \frac{L_h}{2(\sigma \!-\! \delta^h_k)}\|x_k \!-\! x^*\|^2 \!\!+\!\! \frac{L_g}{\sigma \!-\! \delta^h_k}\|x_k \!-\! x^*\| \!\!+\!\! \frac{ 2\delta^g_k(L_g \!+\! \delta^h_k \!+\! \mu_k\|g_k\|^{\theta}/2) \!+\! \mu_k\|g_k\|^{\theta}}{2(\sigma \!-\! \delta^h_k)}\|d_k\|. 
  \end{align}
Notice that \(\delta^g_k \in [0, \min\{\bar{\delta}^g, \|g_k\|^{\theta}, \frac{\sigma - \delta^h_k}{2(L_g + \delta_k^h)}\}]\), \(\mu_k = \min\{1, \frac{\sigma - \delta^h_k}{4\|g_k\|^{\theta}}\}\) and \(\delta^g_k < 1\), we obtain  
 \[
1 \!-\! \frac{ 2\delta^g_k(L_g \!+\! \delta^h_k \!+\! \mu_k\|g_k\|^{\theta}/2) \!+\! \mu_k\|g_k\|^{\theta}}{2(\sigma - \delta^h_k)} \!=\! \frac{2(\sigma \!-\! \delta^h_k) \!-\! 2\delta^g_k(L_g \!+\! \delta^h_k) \!-\! \mu_k(1 \!+\! \delta^g_k)\|g_k\|^{\theta}}{2(\sigma - \delta^h_k)} \!\geq\!\frac{1}{4}. 
 \]
 Substituting the above inequality into~\eqref{eq:dkut} and recalling that \(\delta^h_k \leq \bar{\delta}^h\), we arrive at the desired inequality. 
 
 (b) Notice that \(x_k \in \mathbb{B}(x^*, \hat{\epsilon}_0)\) owing to \(\hat{\epsilon}_1 \leq \hat{\epsilon}_0\). By Lemma~\ref{lem:upperdk} (a), 
\[
\|x_{k+1} - x^*\| \leq \|x_k - x^*\| + \|d_k\| \leq (1 + \varsigma_0)\|x_k - x^*\| \leq \hat{\epsilon}_0,
\]
which implies \(x_{k+1}\in\mathbb{B}(x^*, \hat{\epsilon}_0)\). Combining~\eqref{eq:errb} and~\eqref{eq:dgtgsc} yields  
\begin{equation}\label{eq:dxkp1xs}
\|x_{k+1} - x^*\| \leq \frac{1}{\sigma}\|\nabla f(x_{k+1})\| \leq \frac{1 + \delta^g_{k+1}}{\sigma}\|g_{k+1}\|. 
\end{equation}
Moreover, we have   
\begin{align*}
\|g_{k+1}\| \leq& \|g_{k+1} - r_k\| + \|r_k\| =  \|g_{k+1} - g_k - Q_k(x_{k+1} - x_k)\| + \|r_k\|\\
\leq & \|\nabla f(x_{k+1}) \!-\! \nabla f(x_k) \!-\! \nabla^2f(x_k)(x_{k+1} \!-\! x_k)\| \!+\! \|g_{k+1} \!-\! \nabla f(x_{k+1})\| +\|g_k - \nabla f(x_k)\| \\
&+ \|(Q_k - \nabla^2f(x_k))(x_{k+1} - x_k)\| + \|r_k\| \\
\leq& \frac{L_h}{2}\|x_{k+1} \!-\! x_k\|^2 \!+\! \delta^g_{k+1}\|g_{k+1}\| \!+\! \delta^g_k\|g_k\| \!+\! (\delta^h_k \!+\! \frac{\mu_k}{2}\|g_k\|^{\theta})\|x_{k+1} \!-\! x_k\|\\
\overset{\eqref{eq:boundtildegksc}}{\leq}& \frac{L_h}{2}\|x_{k+1} \!-\! x_k\|^2 \!+\! \delta^g_{k+1}\|g_{k+1}\|  \!+\! (\delta^g_k(L_g \!+\! \delta^h_k \!+\! \frac{\mu_k}{2}\|g_k\|^{\theta}) \!+\! \delta^h_k \!+\! \frac{\mu_k}{2}\|g_k\|^{\theta})\|x_{k+1} \!-\! x_k\|\\
\leq& \frac{L_h}{2}\|x_{k+1} -x_k\|^2 + \delta^g_{k+1}\|g_{k+1}\| + \hat{\varsigma}_1\|g_k\|^{\theta}\|x_{k+1} - x_k\|,
\end{align*}
where \(\hat{\varsigma}_1 = (L_g + \sigma + \frac{U_g^{\theta}}{2(1 - \bar{\delta}^g)^{\theta}} ) + \frac{3}{2}\). The last inequality holds due to \(\delta^g_k \leq \|g_k\|^{\theta}\), \(\delta^h_k \leq \sigma\), \(\mu_k \leq 1\), and~\eqref{eq:uppergk}.

Hence, it follows that  
\begin{align*}
\|g_{k+1}\| \leq& \frac{L_h}{2(1 - \bar{\delta}^g)}\|x_{k+1} - x_k\|^2 + \frac{1}{1 - \bar{\delta}^g}\|g_k\|^{\theta}\|x_{k+1} - x_k\|\\
\overset{\eqref{eq:uppergk}, \eqref{eq:boundtildegksc}}{\leq}& \frac{L_h}{2(1 - \bar{\delta}^g)}\|x_{k+1} - x_k\|^2 + \varsigma_1\|x_{k+1} - x_k\|^{1 + \theta}. 
\end{align*}

Substituting the above inequality into~\eqref{eq:dxkp1xs} gives  
\begin{align*}
\|x_{k+1} - x^*\| \leq& \frac{1 + \delta^g_{k+1}}{\sigma}(\frac{L_h}{2(1 - \bar{\delta}^g)}\|x_{k+1} - x_k\|^2 + \varsigma_1\|x_{k+1} - x_k\|^{1 + \theta})\\
\overset{\eqref{eq:ndk}}{\leq}&\frac{1 + \delta^g_{k+1}}{\sigma}(\frac{L_h\varsigma_0^2}{2(1 - \bar{\delta}^g)}\|x_k - x^*\|^2 + \varsigma_1\varsigma_0^{1 + \theta}\|x_k - x^*\|^{1 + \theta}) \leq \varsigma_2\|x_k - x^*\|^{1 + \theta}.
\end{align*} 

(c) Inequality~\eqref{eq:dxk} follows from~\eqref{eq:dxxstarrnm} and the definition of~\(\hat{\epsilon}_2\). 

(d) We prove the second statement by mathematical induction. By  Lemma~\ref{lem:upperdk}, we have 
\[
\|x_1 - x^*\| \leq \|x_0 - x^*\| + \|d_0\| \overset{\eqref{eq:ndk}}{\leq} \hat{\epsilon}_3 + \varsigma_0\hat{\epsilon}_3 \leq \hat{\epsilon}_2. 
\]
Assume that for some integer \(k > 0\), \(x_l \in \mathbb{B}(x^*, \hat{\epsilon}_2)\) holds for all \(l = 1, \ldots, k\). Then, it follows from~\eqref{eq:dxk} that  
\[
\|x_l - x^*\| \leq \frac{1}{2}\|x_{l-1} - x^*\| \leq \cdots \leq \frac{1}{2^l}\|x_0 - x^*\| \leq \frac{1}{2^l}\hat{\epsilon}_3.
\]
Hence, 
\begin{equation}\label{eq:dxxstar}
\|d_l\| \overset{\eqref{eq:ndk}}{\leq} \varsigma_0\|x_l - x^*\| {\leq} \frac{1}{2^l}\hat{\epsilon}_3\varsigma_0. 
\end{equation}
Accordingly, 
\begin{align*}
\|x_{k+1} - x^*\| =& \|x_1 + \sum_{l = 1}^kd_l - x^*\| \leq\|x_1 - x^*\| + \sum_{l=1}^k\|d_l\| \leq (1+ \varsigma_0)\hat{\epsilon}_3 + \hat{\epsilon}_3\varsigma_0\sum_{l=1}^k\frac{1}{2^l} \\
\leq& (1 + 2\varsigma_0)\hat{\epsilon}_3 \leq \hat{\epsilon}_2. 
\end{align*}  
 \end{proof}

\begin{theorem}\label{th:cr}
Suppose Assumptions~\ref{assume:basicf} and~\ref{assum:gq} with \(\delta^g_k \in [0, \min\{\bar{\delta}^g, \|g_k\|^{\theta}, \frac{\sigma - \delta^h_k}{2(L_g + \delta_k^h)}\}]\) are satisfied. Let \(\{x_k\}\) denote the sequence generated by Algorithm~\ref{alg:rnmsc} with the initial point \(x_0 \in \mathbb{B}(x^*, \hat{\epsilon}_3)\), where \(\hat{\epsilon}_3\) is defined as in Lemma~\ref{lem:upperdk} (d). Then, the sequence \(\{x_k\}\) converges to \(x^*\) with the convergence order \(1 + \theta\). 
\end{theorem}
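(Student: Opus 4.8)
The plan is to establish the claimed $(1+\theta)$-order convergence by combining the three preceding lemmas into a single induction. By Lemma~\ref{lem:ddxk}, the hypothesis $x_0 \in \mathbb{B}(x^*, \hat{\epsilon}_3)$ guarantees that $x_k \in \mathbb{B}(x^*, \hat{\epsilon}_2)$ for every $k$, so the entire trajectory stays in the region where all the estimates are valid. In particular, since $\hat{\epsilon}_2 \leq \hat{\epsilon}_1 \leq \hat{\epsilon}_0$, Lemma~\ref{lem:dxkp1xk} applies at every iteration and yields the key one-step bound
\[
\|x_{k+1} - x^*\| \leq \varsigma_2 \|x_k - x^*\|^{1+\theta}, \quad \forall k \in \mathbb{N}.
\]
That inequality is precisely the statement that the convergence rate is of order $1+\theta$, provided the sequence actually converges to $x^*$.

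Next I would record convergence itself: Lemma~\ref{lem:ddxk} also gives $\|x_{k+1} - x^*\| \leq \tfrac{1}{2}\|x_k - x^*\|$ for all $k$, hence $\|x_k - x^*\| \leq 2^{-k}\|x_0 - x^*\| \to 0$, so $x_k \to x^*$. Strictly speaking this linear bound is what proves convergence; the $(1+\theta)$-order bound then upgrades the \emph{rate}. I would state the argument in that order: first invariance of $\mathbb{B}(x^*,\hat{\epsilon}_2)$ via Lemma~\ref{lem:ddxk}, then $Q$-linear (at worst) convergence to $x^*$ from the factor-$\tfrac12$ contraction, then invoke the one-step estimate of Lemma~\ref{lem:dxkp1xk} along the whole (now convergent) sequence to conclude $\|x_{k+1}-x^*\| = \mathcal{O}(\|x_k - x^*\|^{1+\theta})$, i.e. $Q$-convergence of order $1+\theta$. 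When $\theta = 0$ this still reads correctly as linear convergence (with rate constant $\varsigma_2$, which however one should note may exceed $1$, so the genuine linear rate $\tfrac12$ from Lemma~\ref{lem:ddxk} is the operative one); when $\theta = 1$ it is the promised quadratic rate. It may be worth one sentence remarking that $\varsigma_2$ is a fixed constant independent of $k$, which is what makes ``order $1+\theta$'' meaningful.

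Honestly, there is no real obstacle here: the theorem is essentially a corollary packaging Lemmas~\ref{lem:upperdk}--\ref{lem:ddxk}, all of whose hypotheses (Assumptions~\ref{assume:basicf} and~\ref{assum:gq} with the stated restriction on $\delta^g_k$, plus $x_0 \in \mathbb{B}(x^*,\hat{\epsilon}_3)$) coincide verbatim with the hypotheses of Theorem~\ref{th:cr}. The only mild subtlety is bookkeeping: making sure the radii are nested correctly ($\hat{\epsilon}_3 \leq \hat{\epsilon}_2/(1+2\varsigma_0) \leq \hat{\epsilon}_2 \leq \hat{\epsilon}_1 \leq \hat{\epsilon}_0$) so that the chain of lemma-invocations is legitimate, and phrasing the final conclusion so it does not claim superlinearity when $\theta = 0$. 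The write-up would therefore be short: invoke Lemma~\ref{lem:ddxk} for the invariance and the $2^{-k}$ decay, conclude $x_k \to x^*$, then apply Lemma~\ref{lem:dxkp1xk} termwise to read off the order-$(1+\theta)$ bound, and stop.
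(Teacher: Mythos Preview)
Your proposal is correct and follows essentially the same approach as the paper: invoke Lemma~\ref{lem:ddxk} to keep the iterates inside \(\mathbb{B}(x^*,\hat{\epsilon}_2)\) and obtain convergence \(x_k\to x^*\), then read off the order-\((1+\theta)\) rate from Lemma~\ref{lem:dxkp1xk}. The only cosmetic difference is that the paper phrases convergence via a Cauchy-sequence estimate \(\|x_p - x_q\|\leq \sum_{l=q}^{p-1}\|d_l\|\leq \varsigma_0\hat{\epsilon}_3/2^{q+1}\), whereas you use the \(\tfrac12\)-contraction \(\|x_k - x^*\|\leq 2^{-k}\|x_0 - x^*\|\) directly; your route is slightly leaner but equivalent.
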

\begin{proof}
Notice that for any \(t, s\in\mathbb{N}\) with \(t \geq s\), we have 
\[
\|x_t - x_s\| \leq \sum_{l = s}^{t-1}\|d_l\| \overset{\eqref{eq:dxxstar}}{\leq} \varsigma_0\hat{\epsilon}_3\sum_{l = s}^{t-1}\frac{1}{2^l} \leq \frac{\varsigma_0\hat{\epsilon}_3}{2^{s+1}}. 
\]
Thus, \(\{x_k\}\) is a Cauchy sequence. Furthermore, by Lemma~\ref{lem:upperdk} (b), \(\{x_k\}\) converges to \(x^*\) with the convergence order \(1 + \theta\).  
\end{proof}

By Theorem~\ref{th:cr}, the sequence \(\{x_k\}\) converges linearly to \(x^*\) for \(\theta = 0\), superlinearly for \(\theta\in (0, 1)\), and quadratically for \(\theta = 1\). 

\subsection{OFF regularized Newton method for finite-sum problems}\label{sec:offrnm3}

In this section, we study the OFF regularized Newton method for Problem~\eqref{eq:fs}.

\subsubsection{OFF regularized Newton method for nonconvex problem~\eqref{eq:fs}} \label{sec:offrnfs}

{The following standard assumptions are imposed throughout this section.  
\begin{assumption}\label{assume:ncpfs}
The solution set of Problem~\eqref{eq:fs} is nonempty, and let \(f_*\) denote the optimal function value.
\begin{itemize}
\item[(i)] For each \(i \in [m]:=\{1, \ldots, m\}\), the function \(f_i: \mathbb{R}^n\to(-\infty, +\infty]\) is twice continuously differentiable, and its gradient \(\nabla f_i\) is \(L_{g_i}\)-Lipschitz continuous.
\item[(ii)] For any initial point \(x_0\in \mathbb{R}^n\), the level set \(\mathcal{L}_{f}(x_0) = \{x\vert f(x) \leq f(x_0)\}\) is bounded. 
\end{itemize}
\end{assumption}
It follows from Assumption~\ref{assume:ncpfs} (i) that \(f\) is \(L_g\)-Lipschitz continuous, where \(L_g = \frac{M}{m}\) and \(M = \sum_{i=1}^mL_{g_i}\). 

\begin{assumption}\label{assumption:bv}
The per-sample gradient and Hessian satisfy the following bounded variance conditions. 
\begin{equation*}
\mathbb{E}[\|\nabla f_i(x) - \nabla f(x)\|^2] \leq \sigma_g^2, \quad \mathbb{E}[\|\nabla^2f_i(x) - \nabla^2f(x)\|^2] \leq \sigma_h^2, \quad \forall i\in [m],~\forall x\in \mathbb{R}^n,
\end{equation*}
where \(\sigma_g, \sigma_h > 0\) are constant variance bounds. 
\end{assumption}

We adopt the following subsampling strategy to construct the gradient estimator \(g_k\) and Hessian estimator \(Q_k\) at each iteration \(k\in\mathbb{N}\): 
\begin{equation}\label{eq:subsample}
g_k \triangleq \frac{1}{\vert \mathcal{S}_{g,k}\vert}\sum_{i\in\mathcal{S}_{g,k}}\nabla f_i(x_k) \quad {\rm and}\quad Q_k \triangleq \frac{1}{\vert \mathcal{S}_{h,k}\vert} \sum_{i\in \mathcal{S}_{h,k}}\nabla^2f_i(x_k),
\end{equation}
where \(\mathcal{S}_{g,k}, \mathcal{S}_{h,k}\subseteq [m]\) denote independent subsample batches for gradient and Hessian approximation, respectively, and \(\vert\mathcal{S}\vert\) denotes the cardinality of the index set \(\mathcal{S}\). Define \(\xi_k = \{(\mathcal{S}_{g,1}, \mathcal{S}_{h,1}), \ldots, (\mathcal{S}_{g,k}, \mathcal{S}_{h,k})\}\) and set \(\mathbb{E}_{\xi_{-1}}[f(x_0)] = f(x_0)\). 
By Assumption~\ref{assumption:bv} and Jensen's inequality, we obtain 
\begin{align*}
\mathbb{E}_{\xi_k}[\|g_k \!-\! \nabla f(x_k)\|] \leq& \!\sqrt{\mathbb{E}_{\xi_k}[\|g_k - \nabla f(x_k)\|^2]} = \!\!\! \sqrt{\mathbb{E}_{\xi_k}[\frac{1}{\vert \mathcal{S}_{g, k}\vert^2}\!\!\!\!\sum_{i\in\mathcal{S}_{g, k}}\!\!\!\!\|\nabla f_i(x_k) \!-\! \nabla f(x_k)\|^2]} \\
\leq& \frac{\sigma_g}{\sqrt{\vert \mathcal{S}_{g, k}\vert}},
\end{align*}
Similarly, 
\[
\mathbb{E}_{\xi_k}[\|Q_k - \nabla^2f(x_k)\|] \leq \frac{\sigma_h}{\sqrt{\vert \mathcal{S}_{h, k}\vert}}. 
\]
Therefore, for \(\vert \mathcal{S}_{g, k}\vert \geq \frac{\sigma_g^2}{(\delta^g_k)^2\|g_k\|^2}\) and \(\vert \mathcal{S}_{h, k}\vert \geq \frac{\sigma_h^2}{(\delta^h_k)^2}\), we have 
 \[
    \mathbb{E}_{\xi_k}[\|g_k - \nabla f(x_k)\|] \leq \delta^g_k\|g_k\| \quad {\rm and}\quad \mathbb{E}_{\xi_k}[\|Q_k - \nabla^2 f(x_k)\|] \leq \delta^h_k. 
\]

\begin{theorem}\label{th:rnmfsiter}
Suppose Assumptions~\ref{assume:ncpfs} and~\ref{assumption:bv} hold. At each iteration \(k\), the estimators \(g_k\) and \(Q_k\) are generated via~\eqref{eq:subsample} with \(\vert \mathcal{S}_{g, k}\vert \geq \frac{\sigma_g^2}{(\delta^g_k)^2\epsilon^2}\) and \(\vert \mathcal{S}_{h, k}\vert \geq \frac{\sigma_h^2}{(\delta^h_k)^2}\), where \(\delta_k^g \in [0, \bar{\delta}^g]\) and \(\delta^h_k \in (0, \bar{\delta}^h]\). Let \(\{x_k\}\) be the sequence generated by Algorithm~\ref{alg:rnm}. Then, for any \(K \in \mathbb{N}\) satisfying \(\|g_k\| \geq \epsilon\) for all \(0 \leq k \leq K\), we have  
\[
\min_{0 \leq k \leq K}\mathbb{E}_{\xi_k}[\|g_k\|^2] \leq \frac{2(\frac{1}{2} + \beta_1)^2(f(x_0) - f_*)}{(\gamma -1)L_gK},  
\]
where \(\beta_1 = 2M + \gamma\frac{L_g + 1 + 2\bar{\delta}^g(\frac{1}{2} + 2L_g + 2\bar{\delta}^h)}{1 - 2\bar{\delta}^g}\). 
\end{theorem}
\begin{proof}
Note that the condition \(\|g_k\| \geq \varepsilon\) yields \(\frac{\sigma_g^2}{(\delta^g_k)^2\epsilon^2} \geq \frac{\sigma_g^2}{(\delta^g_k)^2\|g_k\|^2}\). This implies that Assumption~\ref{assume:gkuc} holds in expectation whenever \(\vert \mathcal{S}_{g, k}\vert \geq \frac{\sigma_g^2}{(\delta^g_k)^2\epsilon^2}\) and \(\vert \mathcal{S}_{h, k}\vert \geq \frac{\sigma_h^2}{(\delta^h_k)^2}\).

Taking the expectation of both sides of inequality~\eqref{eq:df} yields 
\begin{align*}
\mathbb{E}_{\xi_{k\!-\!1}}[f(x_k)] \!=\! \mathbb{E}_{\xi_k}[f(x_k)]\!\geq\!& \mathbb{E}_{\xi_k}[f(x_{k\!+\!1})] \!+\! \frac{1}{2}(c_k \!-\! L_g \!-\! \eta_k \!-\! \delta_k^g(\frac{\eta_k}{2} \!+\! 2L_g \!+\! 2\delta_k^h))\mathbb{E}_{\xi_k}[\|x_{k\!+\!1} \!-\! x_k\|^2]\\
\geq& \mathbb{E}_{\xi_k}[f(x_{k+1})]  + \frac{(\gamma - 1)L_g}{2} \mathbb{E}_{\xi_k}[\|x_{k+1} - x_k\|^2], 
\end{align*}
where the second inequality holds by the definition of \(c_k\). Under Assumption~\ref{assume:ncpfs} (i), one has \(\|Q_k\| \leq \frac{1}{\vert \mathcal{S}_{h,k}\vert}\sum_{i\in \mathcal{S}_{h,k}}L_{g_i} \leq M\), which further implies 
\[
\|H_k\| \!\leq\! 2\|Q_k\| \!+\! c_k \!\leq\! 2M \!+\! \gamma\frac{L_g \!+\! \eta_k \!+\! 2\delta^g_k(\frac{\eta_k}{2} \!+\! 2L_g \!+\! 2\delta^h_k)}{1 \!-\! 2\delta^g_k} \!\leq\! 2M \!+\! \gamma\frac{L_g \!+\! 1 \!+\! 2\bar{\delta}^g(\frac{1}{2} \!+\! 2L_g \!+\! 2\bar{\delta}^h)}{1 \!-\! 2\bar{\delta}^g} \!:=\!\beta_1. 
\]
Taking the expectation of both sides of inequality~\eqref{eq:uppergknewton}, we obtain \(\mathbb{E}_{\xi_k}[\|g_k\|] \leq (\frac{1}{2} + \beta_1)\mathbb{E}_{\xi_k}[\|x_{k+1} - x_k\|]\). This implies that 
\[
\mathbb{E}_{\xi_{k-1}}[f(x_k)] \geq \mathbb{E}_{\xi_k}[f(x_{k+1})] + \frac{(\gamma - 1)L_g}{2(\frac{1}{2} + \beta_1)^2}\mathbb{E}_{\xi_k}[\|g_k\|^2]. 
\]
Therefore, for any \(K\in\mathbb{N}\),  
\[
f(x_0) - f_* \geq \mathbb{E}_{\xi_{-1}}[f(x_0)] - \mathbb{E}_{\xi_{K-1}}[f(x_K)] \geq \frac{(\gamma - 1)L_g}{2(\frac{1}{2} + \beta_0)^2}\sum_{k=0}^K\mathbb{E}_{\xi_k}[\|g_k\|^2].
\]
This completes the proof.   
\end{proof}
}

\begin{remark}
In Theorem~\ref{th:rnmfsiter}, the lower bound \(\vert \mathcal{S}_{g, k}\vert \geq \frac{\sigma_g^2}{(\delta^g_k)^2\epsilon^2}\) is adopted instead of \(\vert \mathcal{S}_{g, k}\vert \geq \frac{\sigma_g^2}{(\delta^g_k)^2\|g_k\|^2}\). This modification eliminates the circular dependency between sampling and gradient estimation via the condition \(\|g_k\| \geq \epsilon\).  
To avoid an overly conservative sampling lower bound, we introduce the following adaptive subsampling strategy.

\begin{algorithm*}[h!]
\caption{OFF regularized Newton method for finite-sum problems (OFF-RNM-fs) with adaptive subsampling.}\label{alg:rnmfsadap}
\begin{algorithmic}[1]
\Require{\(\sigma\), \(x_0\), \(\bar{\delta}^g \in (0, \frac{1}{2})\), \(\bar{\delta}^h > 0\), \(\{\delta_k^g\}\subset [0, \bar{\delta}^g]\), \(\{\delta_k^h\}\subset [0, \bar{\delta}^h]\), and \(\tau > 1\).} 
\For{\(k = 0, 1, \ldots, \) until termination}
\State{\Comment{Adaptive subsampling for gradient and Hessian}}
\State{Set \(\vert \mathcal{S}_{g,k}\vert = \min\{m, \frac{\sigma_g^2}{({\delta}^g_k)^2\|g_{k-1}\|^2}\}\) and \(\vert \mathcal{S}_{h,k}\vert = \min\{m, \frac{\sigma_h^2}{({\delta}^h_k)^2}\}\);}
\State{Compute \(g_k\) and \(Q_k\) as defined in~\eqref{eq:subsample};}
\While{\(\vert\mathcal{S}_{g,k}\vert < \min\{m, \frac{\sigma_g^2}{(\delta^g_k)^2\|g_k\|^2}\}\)}
\State{Set \(\vert\mathcal{S}_{g,k}\vert \gets  \min\{m, \tau\vert\mathcal{S}_{g,k}\vert\}\);}
\State{Recompute \(g_k\) and \(Q_k\) as defined in~\eqref{eq:subsample};}
\EndWhile
\State{Comment{Update the current iterate}}
\State{Compute \(c_k = \gamma\frac{L_g + \eta_k + 2\delta^g_k(\frac{\eta_k}{2} + 2L_g + 2\delta^h_k)}{1 - 2\delta^g_k}\) and \(H_k = Q_k + ([-\lambda_{\min}(Q_k)]_+ + c_k)I_n\);}
\State{Compute \(x_{k+1} =  x_k + d_k\), where \(d_k\) satisfies \(\|H_k d_k + g_k\| \leq \frac{\eta_k}{2}\|d_k\|\).}
\EndFor\\
\Return{\(\{x_k\}\)}
\end{algorithmic}
\end{algorithm*}

Algorithm~\ref{alg:rnmfsadap} is well-defined by noting that Assumption~\ref{assume:gkuc} holds surely by using full sampling. The additional computations for finding \(\vert\mathcal{S}_{g,k}\vert\) is bounded by \(\max\{0, \log_{\tau}\frac{\|g_{k-1}\|}{\|g_k\|}\}\) for each iteration \(k\). 
\end{remark}

%
\subsubsection{Local convergence of the OFF Newton method for solving strongly convex finite-sum problem~\eqref{eq:fs}}\label{sec:offrnfsconv}

Since the condition \(\|g_k\| \geq \epsilon\) is no longer valid in the local convergence analysis, we consider Algorithm~\ref{alg:rnmscfs}, which incorporates an adaptive subsampling scheme.

\begin{algorithm*}[h!]
\caption{OFF Newton-type method for strongly convex finite-sum problems (OFF-RNM-scfs) with adaptive subsampling.}\label{alg:rnmscfs}
\begin{algorithmic}[1]
\Require{\(\sigma\), \(x_0\), \(\bar{\delta}^g \in (0, \frac{1}{2})\), \(\bar{\delta}^h \in (0, \sigma)\), and \(\tau > 1\).} 
\For{\(k = 0, 1, \ldots, \) until termination}
\State{\Comment{subsampling}}
\State{set \(\tilde{\delta}^h_k = \min\{\bar{\delta}^h, \|g_{k-1}\|^{\theta}\}\) and \(\tilde{\delta}^g_k = \min\{\bar{\delta}^g, \|g_{k-1}\|^{\theta}, \frac{\sigma - \tilde{\delta}^h_k}{2(L_g + \tilde{\delta}^h_k)}\}\);}
\State{Set \(\vert \mathcal{S}_{g,k}\vert \gets \min\{m, \frac{\sigma_g^2}{(\tilde{\delta}^g_k)^2\|g_{k-1}\|^2}\}\) and \(\vert \mathcal{S}_{h,k}\vert \gets \min\{m, \frac{\sigma_h^2}{(\tilde{\delta}^h_k)^2}\}\);}\label{lines:sghk1}
\State{compute \(g_k\) and \(Q_k\) as in~\eqref{eq:subsample};}
\State{compute \({\delta}^h_k \gets \min\{\bar{\delta}^h, \|g_{k}\|^{\theta}\}\) and \({\delta}^g_k \gets \min\{\bar{\delta}^g, \|g_{k}\|^{\theta}, \frac{\sigma - {\delta}^h_k}{2(L_g + {\delta}^h_k)}\}\);}
\While{\(\vert\mathcal{S}_{g,k}\vert < \min\{m, \frac{\sigma_g^2}{(\delta^g_k)^2\|g_k\|^2}\}\) or \(\vert \mathcal{S}_{h,k}\vert < \frac{\sigma_h^2}{(\delta^h_k)^2}\)}
\State{Set \(\vert\mathcal{S}_{g,k}\vert \gets  \min\{m, \tau\vert\mathcal{S}_{g,k}\vert\}\) and \(\vert \mathcal{S}_{h,k}\vert \gets \min\{m, \tau\vert \mathcal{S}_{h,k}\vert\}\);}\label{lines:sghk2}
\State{compute \(g_k\) and \(Q_k\) as in~\eqref{eq:subsample};}
\State{compute \({\delta}^h_k \gets \min\{\bar{\delta}^h, \|g_{k}\|^{\theta}\}\) and \({\delta}^g_k \gets \min\{\bar{\delta}^g, \|g_{k}\|^{\theta}, \frac{\sigma - {\delta}^h_k}{2(L_g + {\delta}^h_k)}\}\);}
\EndWhile
\State{\Comment{update the iterate}}
\State{compute \(\mu_k = \min\{1, \frac{\sigma - \delta^h_k}{4\|g_k\|^{\theta}}\}\);}
\State{compute \(x_{k+1} =  x_k + d_k\), where \(d_k\) satisfies \(\|Q_k d_k + g_k\| \leq \frac{\mu_k}{2}\|g_k\|^{\theta}\|d_k\|\).}
\EndFor\\
\Return{\(\{x_k\}\)}
\end{algorithmic}
\end{algorithm*}

Similar to the proof of Lemma~\ref{lem:upperdk} (b), we adopt a similar argument while accounting for expectations terms throughout the derivation, which yields the following statement.
\begin{theorem}\label{th:rnmfsstronglyconvex}
Suppose Assumptions~\ref{assume:basicf} and~\ref{assumption:bv} are satisfied. In particular, \(\nabla f_i\) is \(L_{g_i}\)-Lipschitz continuous for each \(i \in[m]\). Let \(\{x_k\}\) denote the sequence generated by Algorithm~\ref{alg:rnmscfs} with the initial point \(x_0 \in \mathbb{B}(x^*, \hat{\epsilon}_3)\), where \(\hat{\epsilon}_3\) is specified in Lemma~\ref{lem:upperdk} (d). Then, \(\mathbb{E}_{\xi_k}[\|x_{k+1} - x^*\|] \leq \hat{\varepsilon}_3\) and  
    \[
    \frac{\mathbb{E}_{\xi_k}[\|x_{k+1} - x^*\|]}{\mathbb{E}_{\xi_k}[\|x_k - x^*\|^{1 + \theta}]} \leq \varsigma_2,
    \]
where the constant \(\varsigma_2\) is defined in Lemma~\ref{lem:upperdk} (b). 
\end{theorem}

\begin{remark}\label{remark:alg5}
We provide several remarks regarding Algorithm~\ref{alg:rnmscfs} for solving Problem~\eqref{eq:fs}. 
\begin{itemize}
\item[(a)] Although Algorithm~\ref{alg:rnmscfs} is well-defined, the sample sizes \(\vert \mathcal{S}_{g, k}\vert\) and \(\vert \mathcal{S}_{h, k}\vert\) gradually approach the full batch size \(m\) as \(\|g_k\|\) decreases. 

\item[(b)] Suppose Assumption~\ref{assume:basicf} holds, where \(\nabla f_i\) is \(L_{g_i}\)-Lipschitz continuous for all \(i \in[m]\). At each iteration \(k\), let \(g_k\) and \(Q_k\) be generated via~\eqref{eq:subsample}, where \(\delta^g_k > 0\) and \(\delta^h_k > 0\) are as defined in Assumption~\ref{assum:gq}. Let \(0 < \widehat{U}_g, \widehat{U}_h < +\infty\) be such that \(\|\nabla f_i(x)\| \leq \widehat{U}_g\) and \(\|\nabla^2f_i(x)\| \leq \widehat{U}_h\)  for all \(x\in \mathcal{L}_f(x_0)\), and 
 \[
\vert \mathcal{S}_{g, k}\vert \geq \frac{\widehat{U}_g^2(1 +\sqrt{8\ln(1/\bar{\delta})})^2}{(\delta^g_k)^2\|g_k\|^2}, \quad {\rm and}\quad \vert \mathcal{S}_{h,k}\vert \geq \frac{16\widehat{U}_h^2}{(\delta^h_k)^2}\ln\frac{2n}{\bar{\delta}}
 \]
 for any given \(\bar{\delta} \in (0, 1)\).  Then Assumption~\ref{assum:gq} is satisfied at iteration \(k\) with probability at least \(1 - \bar{\delta}\) (\cite[Lemmas 2 and 3]{RM19}~and~\cite[Lemma 16]{XRM20}). 
 If we assume that failure to satisfy Assumption~\ref{assum:gq} never occurs at any iteration, then, by arguments similar to Lemma 8, the following result holds.
  
 \textit{``Let \(\bar{\delta}\in(0, 1)\) be given. Replace   
 lines~\ref{lines:sghk1} and~\ref{lines:sghk2} of Algorithm~\ref{alg:rnmscfs} are replaced by ``Set \(\vert \mathcal{S}_{g,k}\vert \gets \min\{m, \frac{\widehat{U}_g^2(1 +\sqrt{8\ln(1/\bar{\delta})})^2}{(\tilde{\delta}^g_k)^2\|g_{k-1}\|^2}\}\) and \(\vert \mathcal{S}_{h,k}\vert \gets \min\{m, \frac{16\widehat{U}_h^2}{(\tilde{\delta}^h_k)^2}\ln\frac{2n}{\bar{\delta}}\}\)" and ``Set \(\vert\mathcal{S}_{g,k}\vert \gets  \min\{m, \tau\vert\mathcal{S}_{g,k}\vert\}\) and \(\vert \mathcal{S}_{h,k}\vert \gets \min\{m, \tau\vert \mathcal{S}_{h,k}\vert\}\)", respectively. Let \(\{x_k\}\) be the sequence generated by the algorithm with \(x_0 \in \mathbb{B}(x^*, \hat{\epsilon}_3)\), where \(\hat{\epsilon}_3\) is defined as in Lemma~\ref{lem:upperdk} (d). Then we get 
    \[
    \frac{\|x_{k+1} - x^*\|}{\|x_k - x^*\|^{1 + \theta}} \leq \varsigma_2, \quad \forall k\in\mathbb{N}
    \]
    with probability \((1 - \bar{\delta})^k\), where \(\varsigma_2\) is defined in Lemma~\ref{lem:upperdk} (b). "}

\item[(c)] Algorithm~\ref{alg:rnmscfs} differs from the subsampled Newton method given in~\cite[Algorithm 5]{RM19} in terms of the accuracy criteria for the approximate gradients and Hessians, as well as the inexact solving condition for the linear equation \(Q_k d = -g_k\). Table~\ref{table:diffsubsample} summarizes the key structural differences between the two algorithms.

\renewcommand{\arraystretch}{1.2} 
\begin{table}[h!]
\caption{Comparison of OFF-RNM-scfs  (Algorithm~\ref{alg:rnmscfs}) and the subsampled Newton  (SSN) method~\cite[Algorithm 5]{RM19}.}\label{table:diffsubsample}
{
\renewcommand{\arraystretch}{1.5}
\begin{tabular}{cc|c|c} \hline\hline
&& OFF-RNM-scfs & SSN\\ \hline
\multicolumn{2}{c|}{\multirow{2}{*}{\(d_k\)}} & \multirow{2}{*}{\(\|Q_kd_k + g_k\| \leq \frac{\mu_k}{2}\|g_k\|^{\theta}\|d_k\|\)} &  \(\|Q_kd_k + g_k\| \leq \theta_1\|g_k\|\)\\  
&& & \(d_k^\top g_k \leq  -(1 - \theta_2)d_k^\top Q_kd_k\)\\  \cdashline{1-4}   
\multirow{5}{*}{\((\mathbb{P})\)} & \(g_k\) & \(\mathbb{P}[\|g_k - \nabla f(x_k)\| \leq \delta_k^g\|g_k\|] \geq 1 - \bar{\delta}\) &   \(\mathbb{P}[\|g_k - \nabla f(x_k)\| \leq \rho^k\varepsilon_2] \geq 1 - \bar{\delta}\)   \\
                 & \(Q_k\) & \(\mathbb{P}[\|Q_k - \nabla^2 f(x_k)\| \leq \delta_k^h] \geq 1 - \bar{\delta}\) &   \(\mathbb{P}[\|Q_k - \nabla^2 f(x_k)\| \leq \frac{\rho_0\gamma}{2 + \rho_0}] \geq 1 - \bar{\delta}\)   \\  
  & \(\vert \mathcal{S}_{g,k}\vert\) & \(\frac{\widehat{U}_g^2(1 +\sqrt{8\ln(1/\bar{\delta})})^2}{(\delta^g_k)^2\|g_k\|^2}\) & \(\frac{\widehat{U}_g^2(1 +\sqrt{8\ln(1/\bar{\delta})})^2}{\rho^{2k}\varepsilon_2^2}\)\\
  & \(\vert \mathcal{S}_{h,k}\vert\) &   \(\frac{16\widehat{U}_h^2}{(\delta^h_k)^2}\ln\frac{2n}{\bar{\delta}} \) & \(\frac{16(2 + \rho_0)^2\widehat{U}_h^2}{\rho_0^2\gamma^2}\ln\frac{2n}{\bar{\delta}}\)\\ 
& convergence  &    \(\mathbb{P}[\frac{\|x_{k+1} - x^*\|}{\|x_k - x^*\|^{1 + \theta}} \leq \varsigma_2] \geq (1 - \bar{\delta})^k\) & \(\mathbb{P}[\|x_{k+1} - x^*\| \leq c\rho^k]\geq (1 - \bar{\delta})^{2k}\)\\    \cdashline{1-4}   
  \multirow{3}{*}{\((\mathbb{E})\)} & \(g_k\) & \(\mathbb{E}_{\xi_k}[\|\nabla g_k - \nabla f(x_k)\|] \leq \delta_k^g\|g_k\|\) &      \multirow{3}{*}{-}       \\
& \(Q_k\) & \(\mathbb{E}_{\xi_k}[\|\nabla Q_k - \nabla^2 f(x_k)\|] \leq \delta_k^h\) &      \\
 & convergence  &     \( \frac{\mathbb{E}_{\xi_k}[\|x_{k+1} - x^*\|]}{\mathbb{E}_{\xi_k}[\|x_k - x^*\|^{1 + \theta}]} \leq \varsigma_2\) & \\ \hline\hline                                            
\end{tabular}}
\end{table}
\end{itemize}
\end{remark}

\section{OFF regularized Newton and negative curvature method}\label{sec:OFFn2c}

In this section, we study the OFF-RN2CM method for seeking an \((\varepsilon_g, \varepsilon_h)\)-second-order stationary point with given \(\varepsilon_g, \varepsilon_h > 0\). 

\subsection{OFF-RN2CM for nonconvex problem}\label{sec:OFFn2cp2}


We make the following assumption on Problem~\eqref{eq:ucfun}. 
\begin{assumption}\label{assume:ncpso}
The function \(f: \mathbb{R}^n \to (-\infty, +\infty]\) is bounded below by a finite constant \(f_*\). For any initial point \(x_0\in\mathbb{R}^n\), the following conditions hold: 
\begin{itemize}
\item[(i)] The level set \(\mathcal{L}_f(x_0) = \{x\vert f(x) \leq f(x_0)\}\) is bounded.
\item[(ii)] The gradient and Hessian of \(f\) are Lipschitz continuous on an open set \(\mathcal{B}\) that contains all line segments \([x_k, x_k + d_k]\), where \(x_k\) and \(d_k\) denote the iterates and search directions generated by the algorithm, respectively. 
\end{itemize}
\end{assumption}

By Assumption~\ref{assume:ncpso}, there exist constants \(L_g > 0\) and \(L_h > 0\) such that for all \(x, y\in\mathcal{B}\), one has \(\|\nabla^2f(x)\| \leq L_g\) and 
\begin{equation}\label{eq:uppercubic}
    f(y) \leq f(x) + \nabla f(x)^\top(y - x) + \frac{1}{2}(y - x)^\top \nabla^2f(x)(y - x) + \frac{L_h}{6}\|y - x\|^3.
\end{equation}
We further assume that \(\varepsilon_g, \varepsilon_h \ll \min\{L_g, L_h\}\). 
At each iteration \(k\in\mathbb{N}\), the approximate gradient and Hessian are required to satisfy the following conditions.

\begin{assumption}\label{assume:gkhkso}
At iteration \(k\), the approximate gradient \(g_k\) and Hessian \(Q_k\) satisfy 
\[
    \|g_k - \nabla f(x_k)\| \leq \frac{1}{3}\varepsilon_g \quad {\rm and}\quad \|Q_k - \nabla^2 f(x_k)\| \leq \frac{1}{18} \varepsilon_h.
\]
\end{assumption}

The OFF-RN2CM method constructs the search direction by estimating the eigenvector associated with the minimum eigenvalue of \(Q_k\). The estimation accuracy of the approximate eigenvector is characterized by the following assumption.

\begin{assumption}\label{assume:lampso}
Let \(\hat{p}_k\) denote a unit approximate eigenvector of \(Q_k\) associated with its approximate minimum eigenvalue,  and define \(\hat{\lambda}_k:= \hat{p}_k^\top Q_k\hat{p}_k\). We assume that
\[
\hat{\lambda}_k - \lambda_{\min}(Q_k) \leq \frac{1}{2}\varepsilon_h.
\]
\end{assumption}

\begin{assumption}\label{assume:gkhkso}
The approximate gradient \(g_k\) and Hessian \(Q_k\) at iteration \(k\) satisfy
\[
    \|g_k - \nabla f(x_k)\| \leq \frac{1}{3}\varepsilon_g \quad {\rm and}\quad \|Q_k - \nabla^2 f(x_k)\| \leq \frac{1}{18} \varepsilon_h.
\]
\end{assumption}

Under Assumptions~\ref{assume:gkhkso} and~\ref{assume:lampso}, if \(\|g_k\| \leq \varepsilon_g\) and \(\hat{\lambda}_k \geq -\varepsilon_h\), then  
\[
\|\nabla f(x_k)\| \leq \frac{4}{3}\varepsilon_g \quad {\rm and}\quad \lambda_{\min}(\nabla^2f(x_k)) \geq -\frac{14}{9}\varepsilon_h. 
\]

The OFF-RN2CM method can be viewed as an OFF-type variant of the Newton-CG method proposed in~\cite{ROW20}. Unlike the original scheme, OFF-RN2CM uses inexact derivatives, adopts constant step sizes to circumvent additional  function evaluations, and constructs iterative directions via unit eigenvectors corresponding to the smallest negative eigenvalues. In contrast, the baseline Newton-CG method exploits arbitrary negative curvature vectors obtained from the Capped CG method~\cite{ROW20}. We summarize the complete implementation of OFF-RN2CM in Algorithm~\ref{alg:rnmso}. 

\begin{algorithm*}[h!]
\caption{OFF regularized Newton and negative curvature method (OFF-RN2CM).}\label{alg:rnmso}
\begin{algorithmic}[1]
\Require{\(L_h\), \(\varepsilon_g\in(0, L_h)\), \(\varepsilon_h = \sqrt{L_h\varepsilon_g}\), \(\hat{\mu} > 0\), and \(\eta > 0\)}
\For{\(k = 0, 1, \ldots\)}
\If{\(\|g_k\| \geq \varepsilon_g\)}\Comment{\textbf{first phase}}
\State{compute \(Q_k\) and \((\hat{\lambda}_k,\hat{p}_k)\);}
\If{\(\hat{\lambda}_k < -\varepsilon_h\)}
\State{\(d_{\rm type} = {\rm NC}\);}\Comment{negative curvature step}\label{line:5}
\Else
\State{Invoke the CG method to solve the linear system \((Q_k + 2\varepsilon_hI_n)d = -g_k\) with the zero vector as the initial guess, and return a vector \(d_k\) that satisfies}\label{line:7}
\begin{equation}\label{eq:dkso}
   \| (Q_k + 2\varepsilon_hI_n)d_k + g_k \| \leq \hat{\mu}\varepsilon_h\|d_k\|;
\end{equation}
\If{\(\|d_k\| \leq \frac{2\varepsilon_g}{\varepsilon_h}\)}
\State{Terminate and return \(x_{k+1} \gets x_k + d_k\);}\label{line:9}
\Else
\State{\(d_{\rm type} = {\rm SOL}\);}\Comment{approximate Newton step}\label{line:11}
\EndIf
\EndIf
\Else\Comment{\textbf{second phase}}
\State{compute \(Q_k\) and \((\hat{\lambda},\hat{p}_k)\);}
\If{\(\hat{\lambda}_k \geq -\varepsilon_h\)}
\State{Terminate and return \(x_k\);}\label{line:17}
\Else
\State{\(d_{\rm type} = {\rm NC}\);}\Comment{negative curvature step}
\EndIf
\EndIf
\State{\Comment{update the iterate}}
\If{\(d_{\rm type} = {\rm NC}\)}
\State{\(d_k = -{\rm sign}(g_k^\top\hat{p}_k)\hat{p}_k\) and \(\alpha_k = \frac{17}{12L_h}\varepsilon_h\);}\label{line:24}
\Else
\State{\(\alpha_k = \frac{\sqrt{2}}{\sqrt{3(L_h+\eta)}\|d_k\|}\varepsilon_g^{1/2}\);}\label{line:26}
\EndIf
\State{\(x_{k+1} \gets x_k + \alpha_kd_k\);}
\State{compute \(g_{k+1}\) and \(Q_{k+1}\).}
\EndFor\\
\Return{\(\{x_k\}\)}
\end{algorithmic}
\end{algorithm*}

OFF-RN2CM is a two-phases method. In the first phase (\(\|g_k\| \geq \varepsilon_g\)), the algorithm splits each iteration into two cases depending on the existence of significant approximate negative curvature satisfying (\(\hat{\lambda}_k < -\varepsilon_h\)) at the current iterate. If such negative curvature exists, a negative curvature step is performed along the direction \(-{\rm sign}(g_k^\top\hat{p}_k)\hat{p}_k\) with a constant step size \(\frac{17}{12L_h}\varepsilon_h\) (lines~\ref{line:5} and~\ref{line:24}); this step size guarantees a sufficient decrease in the objective function (see Lemma~\ref{lem:dfncso}). If such negative curvature is absent, the CG method is adopted to solve the inexact regularized Newton equation \((Q_k + 2\varepsilon_hI_n)d = - g_k\) (line~\ref{line:7}). The regularization parameter \(2\varepsilon_h\) is selected to maintain the positive definiteness of the coefficient matrix. If the output vector \(d_k\) generated by the CG method possesses a sufficiently small norm, the algorithm executes a unit step along \(d_k\) and then terminates (line~\ref{line:9}). Otherwise, an approximate Newton step is implemented along \(d_k\) with the step size \(\frac{\sqrt{2}}{\sqrt{3(L_h + \eta)}\|d_k\|}\varepsilon_g^{1/2}\) for some constant \(\eta > 0\) (lines~\ref{line:11} and~\ref{line:26}), which ensures a valid function-value decrease (see Lemma~\ref{lem:dfsolso}). 
The algorithm enters to the second phase when \(\|g_k\| < \varepsilon_g\). If \(\hat{\lambda}_k \geq -\varepsilon_h\), the algorithm terminate (line~\ref{line:17}). Otherwise,  a negative curvature step is executed to escape the region where \(\hat{\lambda}_k < -\varepsilon_h\) (or equally, the algorithm escapes the region satisfying \(\lambda_{\min}(\nabla^2f(x_k)) \leq -\frac{17}{18}\varepsilon_h\), since combining \(\lambda_{\min}(Q_k) \leq \hat{\lambda}_k\) with Assumption~\ref{assume:gkhkso} on \(Q_k\) yields that \(\hat{\lambda}_k < -\varepsilon_h\) implies \(\lambda_{\min}(\nabla^2f(x_k)) \leq -\frac{17}{18}\varepsilon_h\).). We demonstrate that OFF-RN2CM outputs an approximate second-order stationary point in all cases (Lemma~\ref{eq:ternatestep9} and Theorem~\ref{th:conrnmso}).  The coupling relation \(\varepsilon_h=\sqrt{L_h\varepsilon_g}\) adopted in Algorithm~\ref{alg:rnmso} ensures that the per‑iteration function decrease satisfies \(\mathcal{O}(\varepsilon_g^{3/2})\) in all cases, which is critical for establishing the optimal complexity bound. The eigenvector estimator \(\hat{p}_k\) can be efficiently computed via the Lanczos algorithm (see~\cite[Algorithm 5]{ROW20}). 

\begin{remark}
The main distinction between Algorithm~\ref{alg:rnmso} and the inexact Newton-CG (I-N-CG) method~\cite[Algorithm 4]{YXRWM23} lies in the specification of the negative curvature direction. Algorithm~\ref{alg:rnmso} employs the unit eigenvector corresponding to the estimated minimum eigenvalue of \(Q_k\) as the negative curvature direction, instead of an arbitrary negative curvature vector generated by the Capped CG method. This design simplifies the accuracy requirement for the approximate gradient estimator. Specifically, the I-N-CG method  enforces the condition \(\|g_k\| \leq \frac{1 -\zeta}{8}\min\{\frac{3\varepsilon_h^2}{65(L_h + \eta)}, \max\{\varepsilon_g, \min\{\varepsilon_h\|d_k\|, \|g_k\|, \|g_{k+1}\|\}\}\}\) for some \(\zeta, \eta \in (0, 1)\), where \(d_k\) and \(g_k\) are unknown and depend on \(g_k\) for computation. In comparison, Algorithm~\ref{alg:rnmso} adopts a comparatively simpler accuracy criterion, which becomes tighter when \(\|g_k\|\) is sufficiently large. It can be verified that the theoretical results derived in this section still hold under the relaxed condition \(\|g_k\| \leq \frac{1}{3}\max\{\varepsilon_g, \min\{\varepsilon_h\|d_k\|, \|g_k\|\}\}\).
\end{remark}

\begin{lemma}\label{lem:dfncso}
    Suppose Assumptions~\ref{assume:ncpso},~\ref{assume:gkhkso},  and~\ref{assume:lampso} are satisfied. If, at iteration \(k\) of Algorithm~\ref{alg:rnmso}, \(d_{\rm type} = {\rm NC}\), then we have 
\begin{equation}\label{eq:dkncso}
    f(x_{k+1}) \leq f(x_k) - \frac{17}{10368\sqrt{L_h}}\varepsilon_g^{3/2}.
    \end{equation}
\end{lemma}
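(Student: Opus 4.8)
The plan is to invoke the cubic upper bound~\eqref{eq:uppercubic} along the negative curvature step and then carefully track constants. On an iteration with \(d_{\rm type}={\rm NC}\), the step is \(d_k=-{\rm sign}(g_k^\top\hat p_k)\hat p_k\), which is a unit vector since \(\hat p_k\) is unit by Assumption~\ref{assume:lampso}, and the step size is \(\alpha_k=\frac{17}{12L_h}\varepsilon_h\). The segment \([x_k,x_{k+1}]\) lies in \(\mathcal B\) by Assumption~\ref{assume:ncpso}(ii), so~\eqref{eq:uppercubic} with \(y=x_{k+1}=x_k+\alpha_kd_k\) and \(\|d_k\|=1\) yields
\[
f(x_{k+1})\le f(x_k)+\alpha_k\nabla f(x_k)^\top d_k+\tfrac{\alpha_k^2}{2}\,d_k^\top\nabla^2 f(x_k)d_k+\tfrac{L_h}{6}\alpha_k^3 .
\]
The remaining work is to bound the linear and curvature terms and then substitute the values of \(\alpha_k\) and \(\varepsilon_h=\sqrt{L_h\varepsilon_g}\).

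For the linear term I would exploit the sign convention built into \(d_k\): it gives \(g_k^\top d_k=-|g_k^\top\hat p_k|\le 0\), whence by Assumption~\ref{assume:gkhkso},
\[
\nabla f(x_k)^\top d_k=g_k^\top d_k+(\nabla f(x_k)-g_k)^\top d_k\le\|\nabla f(x_k)-g_k\|\le\tfrac13\varepsilon_g .
\]
For the curvature term, the fact that \(d_{\rm type}={\rm NC}\) was triggered means \(\hat\lambda_k<-\varepsilon_h\), and since \(d_k=\pm\hat p_k\) we have \(d_k^\top Q_kd_k=\hat p_k^\top Q_k\hat p_k=\hat\lambda_k\); combining with Assumption~\ref{assume:gkhkso} gives \(d_k^\top\nabla^2 f(x_k)d_k\le\hat\lambda_k+\|\nabla^2 f(x_k)-Q_k\|\le-\varepsilon_h+\tfrac1{18}\varepsilon_h=-\tfrac{17}{18}\varepsilon_h\). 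Plugging both bounds in,
\[
f(x_{k+1})\le f(x_k)+\tfrac13\varepsilon_g\alpha_k-\tfrac{17}{36}\varepsilon_h\alpha_k^2+\tfrac{L_h}{6}\alpha_k^3 .
\]

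The last step is the substitution \(\alpha_k=\frac{17}{12L_h}\varepsilon_h\) together with \(\varepsilon_g=\varepsilon_h^2/L_h\), which rewrites each of the three terms as a rational multiple of \(\varepsilon_h^3/L_h^2\): the linear, curvature, and cubic contributions become \(\tfrac{17}{36}\), \(-\tfrac{4913}{5184}\), and \(\tfrac{4913}{10368}\) times \(\varepsilon_h^3/L_h^2\). Over the common denominator \(10368\) these sum to \(\tfrac{4896-9826+4913}{10368}=-\tfrac{17}{10368}\), and using \(\varepsilon_h^3/L_h^2=L_h^{3/2}\varepsilon_g^{3/2}/L_h^2=\varepsilon_g^{3/2}/\sqrt{L_h}\) gives exactly \(f(x_{k+1})\le f(x_k)-\frac{17}{10368\sqrt{L_h}}\varepsilon_g^{3/2}\). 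The only real obstacle is that the linear term \(\alpha_k\nabla f(x_k)^\top d_k\) is not negative and, after the substitution \(\varepsilon_g=\varepsilon_h^2/L_h\), is of the same order \(\varepsilon_h^3/L_h^2\) as both the negative curvature term and the positive cubic term; one must verify that the specific step size \(\alpha_k=\frac{17}{12L_h}\varepsilon_h\) is tuned so that the net coefficient is still strictly negative. Beyond this bookkeeping of constants there is no conceptual difficulty.
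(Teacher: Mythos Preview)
Your argument is correct and follows the same route as the paper: apply the cubic upper bound~\eqref{eq:uppercubic}, control the linear term using the sign convention in \(d_k\) together with Assumption~\ref{assume:gkhkso}, control the curvature term via \(\hat\lambda_k<-\varepsilon_h\) and the Hessian accuracy bound, and then substitute \(\alpha_k=\tfrac{17}{12L_h}\varepsilon_h\) and \(\varepsilon_h=\sqrt{L_h\varepsilon_g}\). The only cosmetic difference is that the paper retains the extra nonpositive term \(-\alpha_k|g_k^\top\hat p_k|\) before discarding it and writes the curvature bound as \(-\tfrac{17}{18}|\hat\lambda_k|\) (then uses \(|\hat\lambda_k|>\varepsilon_h\)), whereas you drop \(g_k^\top d_k\le 0\) immediately and bound the curvature by \(-\tfrac{17}{18}\varepsilon_h\) in one step; the resulting inequality and final constant are identical.
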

\begin{proof}
     Notice that 
    \[
    \vert \hat{p}_k^\top Q_k\hat{p}_k - \hat{p}_k^\top \nabla^2f(x_k)\hat{p}_k\vert \leq \|Q_k -\nabla^2f(x_k)\|\|\hat{p}_k\|^2 \leq \frac{1}{18}\varepsilon_h \leq \frac{1}{18}\vert \hat{\lambda}_k\vert,
    \]
    where the second inequality follows from \(\|\hat{p}_k\| = 1\) and the last inequality follows from \(\hat{\lambda}_k < -\varepsilon_h\) when \(d_{\rm type} = {\rm NC}\). Hence, we have 
    \begin{equation}\label{eq:pn2fp}
    \hat{p}_k^\top \nabla^2f(x_k)\hat{p}_k \leq \frac{1}{18}\vert \hat{\lambda}_k\vert + \hat{p}_k^\top Q_k\hat{p}_k = -\frac{17}{18}\vert \hat{\lambda}_k\vert. 
    \end{equation}
    Therefore, 
    \begin{align*}
    f(x_k \!+\! \alpha_kd_k) \!\overset{\eqref{eq:uppercubic}}{\leq}\!&\! f(x_k) + \alpha_k\nabla f(x_k)^\top d_k + \frac{\alpha_k^2}{2}d_k^\top \nabla^2f(x_k)d_k + \frac{L_h\alpha_k^3}{6}\|d_k\|^3\\
   \! =\!&\! f(x_k) \!-\! \alpha_k{\rm sign}(g_k^\top\hat{p}_k)\nabla f(x_k)^\top \hat{p}_k \!+\! \frac{\alpha_k^2}{2}\hat{p}_k^\top \nabla^2f(x_k)\hat{p}_k \!+\! \frac{L_h\alpha_k^3}{6}\\
  \!\overset{\eqref{eq:pn2fp}}{\leq}\! &\! f(x_k) \!-\! \alpha_k{\rm sign}(\!g_k^\top\hat{p}_k\!)(\!g_k^\top \hat{p}_k\!) \!-\!\! \frac{17\alpha_k^2}{36}\vert \hat{\lambda}_k\vert \!+ \!\frac{L_h\alpha_k^3}{6}  \!-\! \alpha_k{\rm sign}(\!g_k^\top\hat{p}_k\!)(\!\nabla\! f(x_k) \!-\! g_k\!)^{\!\top}\! \hat{p}_k\\
  \leq & f(x_k) - \alpha_k\vert g_k^\top \hat{p}_k\vert - \frac{\alpha_k}{3}(\frac{17}{12}\varepsilon_h\alpha_k - \frac{L_h}{2}\alpha_k^2 - \varepsilon_g),
    \end{align*}
    where the first equality follows from  \(\|d_k\| = 1\) and the last inequality follows from \(\vert \hat{\lambda}_k\vert > \varepsilon_h\) and Assumption~\ref{assume:gkhkso}. If \(\alpha_k = \frac{17}{12L_h}\varepsilon_h\) and \(\varepsilon_h = \sqrt{L_h\varepsilon_g}\), then we have 
    \[
    \frac{\alpha_k}{3}(\frac{17}{12}\varepsilon_h\alpha_k - \frac{L_h}{2}\alpha_k^2 - \varepsilon_g) = \frac{17}{10368\sqrt{L_h}}\varepsilon_g^{3/2}.
\]
The desired result is satisfied\footnote{From the proof of the theorem, it can be observed that \(\alpha_k = \frac{\varepsilon_h}{L_h}\alpha\) (for some \(\alpha \in (\frac{4}{3}, \frac{3}{2})\)) is an alternative choice, which still yields a function value decrease of \(\mathcal{O}(\varepsilon_g^{3/2})\).}.
\end{proof}

\begin{lemma}\label{lem:dfsolso}
Suppose Assumptions~\ref{assume:ncpso},~\ref{assume:gkhkso},  and~\ref{assume:lampso} are satisfied. If, at iteration \(k\) of Algorithm~\ref{alg:rnmso}, \(d_{\rm type} = {\rm SOL}\), then we have 
\begin{equation*}
    f(x_{k+1}) \leq f(x_k)  - \frac{5\sqrt{2}}{18\sqrt{3(L_h + \eta)}}\varepsilon_g^{ 3/2}.
    \end{equation*}
\end{lemma}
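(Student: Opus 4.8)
The plan is to follow the template of the proof of Lemma~\ref{lem:dfncso}: expand $f(x_{k+1})=f(x_k+\alpha_kd_k)$ with the cubic bound~\eqref{eq:uppercubic}, replace the exact derivatives by the surrogates $g_k,Q_k$ using Assumption~\ref{assume:gkhkso}, and then verify that the step size $\alpha_k=\frac{\sqrt{2}}{\sqrt{3(L_h+\eta)}\|d_k\|}\varepsilon_g^{1/2}$ is tuned so that the three surviving $\Theta(\varepsilon_g^{3/2})$ quantities leave exactly a net decrease of $\frac{\eta}{6}(\alpha_k\|d_k\|)^3=\frac{\sqrt{2}\eta}{9\sqrt{3}(L_h+\eta)^{3/2}}\varepsilon_g^{3/2}$.

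First I would record what holds in the branch $d_{\rm type}={\rm SOL}$, which occurs only in the first phase. This branch is reached only when $\hat\lambda_k\ge-\varepsilon_h$, so Assumption~\ref{assume:lampso} gives $\lambda_{\min}(Q_k)\ge\hat\lambda_k-\tfrac12\varepsilon_h\ge-\tfrac32\varepsilon_h$, hence $Q_k+2\varepsilon_hI_n\succeq\tfrac12\varepsilon_hI_n\succ0$; moreover the norm test triggering early termination in Algorithm~\ref{alg:rnmso} has failed, so $\|d_k\|>2\varepsilon_g/\varepsilon_h$, i.e.\ $\varepsilon_h\|d_k\|>2\varepsilon_g$; and combining this with $\varepsilon_h=\sqrt{L_h\varepsilon_g}$ yields $\alpha_k<\tfrac{1}{\sqrt{6}}\sqrt{L_h/(L_h+\eta)}<1$, which both legitimizes applying~\eqref{eq:uppercubic} on $[x_k,x_k+\alpha_kd_k]\subseteq[x_k,x_k+d_k]\subseteq\mathcal B$ and makes $1-\tfrac{\alpha_k}{2}\ge\tfrac12$. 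Finally, since $d_k$ is a conjugate-gradient iterate for $(Q_k+2\varepsilon_hI_n)d=-g_k$ started from the zero vector, it lies in the Krylov subspace to which its residual $r_k:=(Q_k+2\varepsilon_hI_n)d_k+g_k$ is orthogonal, so $r_k^\top d_k=0$ and therefore $g_k^\top d_k=-d_k^\top(Q_k+2\varepsilon_hI_n)d_k$ (alternatively, one may only use $|r_k^\top d_k|\le\|r_k\|\|d_k\|\le\hat\mu\varepsilon_h\|d_k\|^2$ from~\eqref{eq:dkso}, at the price of additionally requiring $\hat\mu$ to be a sufficiently small absolute constant).

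Next, from~\eqref{eq:uppercubic} and Assumption~\ref{assume:gkhkso} (so that $\nabla f(x_k)^\top d_k\le g_k^\top d_k+\tfrac{\varepsilon_g}{3}\|d_k\|$ and $d_k^\top\nabla^2f(x_k)d_k\le d_k^\top Q_kd_k+\tfrac{\varepsilon_h}{18}\|d_k\|^2$), after the substitutions $d_k^\top Q_kd_k=d_k^\top(Q_k+2\varepsilon_hI_n)d_k-2\varepsilon_h\|d_k\|^2$ and $g_k^\top d_k=-d_k^\top(Q_k+2\varepsilon_hI_n)d_k$ I would collect the $d_k^\top(Q_k+2\varepsilon_hI_n)d_k$ terms with coefficient $-\alpha_k(1-\tfrac{\alpha_k}{2})$, discard the nonpositive remainder $-\tfrac{35\alpha_k^2\varepsilon_h}{36}\|d_k\|^2$, and use $1-\tfrac{\alpha_k}{2}\ge\tfrac12$ together with $d_k^\top(Q_k+2\varepsilon_hI_n)d_k\ge\tfrac12\varepsilon_h\|d_k\|^2$ to reach
\[
f(x_{k+1})-f(x_k)\le-\frac{\alpha_k\varepsilon_h}{4}\|d_k\|^2+\frac{\alpha_k\varepsilon_g}{3}\|d_k\|+\frac{L_h\alpha_k^3}{6}\|d_k\|^3 .
\]
It then remains to insert the value of $\alpha_k$. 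Since $\alpha_k^2\|d_k\|^2=\tfrac{2}{3(L_h+\eta)}\varepsilon_g$, one has $\tfrac{(L_h+\eta)\alpha_k^3}{6}\|d_k\|^3=\tfrac{\alpha_k\|d_k\|}{9}\varepsilon_g$, while $\varepsilon_h\|d_k\|>2\varepsilon_g$ gives $-\tfrac{\alpha_k\varepsilon_h}{4}\|d_k\|^2+\tfrac{\alpha_k\varepsilon_g}{3}\|d_k\|<\alpha_k\|d_k\|\varepsilon_g\bigl(\tfrac13-\tfrac12\bigr)=-\tfrac{\alpha_k\|d_k\|}{6}\varepsilon_g\le-\tfrac{(L_h+\eta)\alpha_k^3}{6}\|d_k\|^3$; adding back $\tfrac{L_h\alpha_k^3}{6}\|d_k\|^3$ leaves $-\tfrac{\eta\alpha_k^3}{6}\|d_k\|^3=-\tfrac{\eta}{6}(\alpha_k\|d_k\|)^3=-\tfrac{\sqrt{2}\eta}{9\sqrt{3}(L_h+\eta)^{3/2}}\varepsilon_g^{3/2}$, which is the claim.

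The bookkeeping above is mechanical once the right substitutions are chosen; the one non-routine observation is that the step size is engineered so $\alpha_k^2\|d_k\|^2$ is a fixed multiple of $\varepsilon_g$ (making all three surviving terms $\Theta(\varepsilon_g^{3/2})$), and that the early-termination threshold $\|d_k\|>2\varepsilon_g/\varepsilon_h$ is precisely what makes $\tfrac{\alpha_k\varepsilon_g}{3}\|d_k\|$ strictly dominated by $\tfrac{\alpha_k\varepsilon_h}{4}\|d_k\|^2$ with the surplus matching $\tfrac{(L_h+\eta)\alpha_k^3}{6}\|d_k\|^3$; the auxiliary care point is the handling of $r_k^\top d_k$ (using CG orthogonality versus only the crude $\hat\mu$-bound).
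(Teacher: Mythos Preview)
Your proposal is correct and follows essentially the same approach as the paper: both use $Q_k+2\varepsilon_hI_n\succeq\tfrac12\varepsilon_hI_n$ (from $\hat\lambda_k\ge-\varepsilon_h$ and Assumption~\ref{assume:lampso}), the CG orthogonality $r_k^\top d_k=0$, the cubic upper bound~\eqref{eq:uppercubic} combined with Assumption~\ref{assume:gkhkso}, the branch condition $\|d_k\|>2\varepsilon_g/\varepsilon_h$, and the specific step size to produce the constant $\tfrac{\sqrt{2}\eta}{9\sqrt{3}(L_h+\eta)^{3/2}}$. The only cosmetic difference is that the paper keeps the $\tfrac{\alpha_k^2\varepsilon_h}{36}\|d_k\|^2$ error and merges it with $-\tfrac{\alpha_k\varepsilon_h}{4}\|d_k\|^2$ to obtain the coefficient $-\tfrac{2\alpha_k}{9}$ in~\eqref{eq:dfsol}, whereas you discard the combined $-\tfrac{35\alpha_k^2\varepsilon_h}{36}\|d_k\|^2$ outright and retain the slightly sharper $-\tfrac{\alpha_k}{4}$; both routes land on the same final bound after inserting $\alpha_k\|d_k\|=\sqrt{2\varepsilon_g/(3(L_h+\eta))}$.
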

\begin{proof}
Notice that 
    \[
    Q_k + 2\varepsilon_hI_n \succeq (\lambda_{\min}(Q_k) + 2\varepsilon_h)I_n \succeq (\hat{\lambda}_k + 2\varepsilon_h - \frac{1}{2}\varepsilon_h)I_n \succeq \frac{1}{2}\varepsilon_h I_n,
    \]
    where the second inequality follows from Assumption~\ref{assume:lampso} and the last inequality follows from \(\hat{\lambda}_k \geq -\varepsilon_h\) when \(d_{\rm type} = {\rm SOL}\). 

    Define \(r_k = (Q_k + 2\varepsilon_hI_n)d_k + g_k\). For any \(\alpha\in[0, \frac{1}{2}]\), we have 
    \begin{align*}
        \alpha g_k^\top d_k + \frac{\alpha^2}{2}d_k^\top Q_kd_k =& \alpha (r_k - (Q_k + 2\varepsilon_hI_n)d_k)^\top d_k + \frac{\alpha^2}{2}d_k^\top Q_kd_k\\
        =&\alpha r_k^\top d_k -\alpha(1 - \frac{\alpha}{2})d_k^\top(Q_k + 2\varepsilon_hI_n)d_k - \alpha^2\varepsilon_h\|d_k\|^2\\
        \leq& -\frac{1}{2}\alpha(1 - \frac{\alpha}{2})\varepsilon_h\|d_k\|^2 \leq - \frac{3}{8}\alpha\varepsilon_h\|d_k\|^2,
    \end{align*}
where the first inequality follows from the property of the CG method, which guarantees \(r_k^\top d_k = 0\), and the second inequality follows from \(1 - \frac{\alpha}{2}\geq \frac{3}{4}\). Therefore, we have 
    \begin{align}
    f(x_k + \alpha d_k) \overset{\eqref{eq:uppercubic}}{\leq}& f(x_k) + \alpha\nabla f(x_k)^\top d_k + \frac{\alpha^2}{2}d_k^\top \nabla^2f(x_k)d_k + \frac{L_h\alpha^3}{6}\|d_k\|^3 \nonumber \\
    =& f(x_k) \!+\! \alpha g_k^\top d_k \!+\! \frac{\alpha^2}{2}\!d_k^\top Q_kd_k  \!+\! \alpha(\nabla f(x_k) \!-\! g_k)^\top d_k \!+\! \frac{\alpha^2}{2}\!d_k^\top (\nabla^2f(x_k) \!-\! Q_k)d_k \nonumber \\
    &+ \frac{L_h\alpha^3}{6}\!\|d_k\|^3 \nonumber \\
    \leq& f(x_k) \!-\! \frac{3\alpha}{8}\varepsilon_h\|d_k\|^2 \!+\! \frac{\alpha}{3}\varepsilon_g\|d_k\| \!+\! \frac{\alpha^2}{36}\varepsilon_h\|d_k\|^2 \!+\! \frac{L_h\alpha^3}{6}\|d_k\|^3 \nonumber \\
=& f(x_k) +\alpha\|d_k\|(- \frac{3}{8}\varepsilon_h\|d_k\| + \frac{1}{3}\varepsilon_g + \frac{\alpha}{36}\varepsilon_h\|d_k\| + \frac{L_h\alpha^2}{6}\|d_k\|^2), \label{eq:dfsol}   
    \end{align}
where the first inequality follows from Assumption~\ref{assume:gkhkso}. Notice that \(\|d_k\| > \frac{2\varepsilon_g}{\varepsilon_h}\) and \(\alpha_k\|d_k\| = \frac{\sqrt{2}\varepsilon_g^{1/2}}{\sqrt{3(L_h + \eta)}}\), we have 
\begin{align*}
- \frac{3}{8}\varepsilon_h\|d_k\| + \frac{1}{3}\varepsilon_g + \frac{\alpha_k}{36}\varepsilon_h\|d_k\| + \frac{L_h\alpha_k^2}{6}\|d_k\|^2 <& -\frac{5}{12}\varepsilon_g + \frac{\sqrt{L_h\varepsilon_g}}{36}\alpha_k\|d_k\| + \frac{L_h}{6}\alpha_k^2\|d_k\|^2\\
= &(- \frac{5}{12} + \frac{\sqrt{2L_h}}{36\sqrt{3(L_h + \eta)}} + \frac{L_h}{9(L_h+\eta)})\varepsilon_g\\
< & -\frac{5}{18}\varepsilon_g,
\end{align*}
where the last inequality follows from \(\frac{2L_h}{2(L_h + \eta} < 1\) and \(\frac{L_h}{L_h + \eta} < 1\). Moreover, \(\alpha_k^2 = \frac{2\varepsilon_g}{3(L_h+\eta)\|d_k\|^2} < \frac{L_h}{6(L_h+\eta)} < \frac{1}{2}\). Substituting the above equation into~\eqref{eq:dfsol}, we obtain the desired result. 
\end{proof}

One can observe that for \(d_{type} = {\rm SOL}\), the key factor leading to the function value decrease achieving \(\mathcal{O}(\varepsilon^{3/2})\) is \(\|d_k\| > \frac{2\varepsilon_g}{\varepsilon_h}\), which is independent of the accuracy condition~\eqref{eq:dkso}. 
Similar to~\cite[Lemma~8]{YXRWM23}, 
the following statement holds.  
\begin{lemma}\label{eq:ternatestep9}
Suppose that Assumptions~\ref{assume:ncpso},~\ref{assume:gkhkso},  and~\ref{assume:lampso} are satisfied. If  Algorithm~\ref{alg:rnmso} terminates at iteration \(k\) on line~\ref{line:9}, then we have 
\[
\|\nabla f(x_k + d_k)\| \leq (\frac{58}{9} + 2\hat{\mu})\varepsilon_g \quad {\rm and}\quad \lambda_{\min}(\nabla^2 f(x_k + d_k)) \geq -\frac{32}{9}\sqrt{L_h\varepsilon_g}.
\]
\end{lemma}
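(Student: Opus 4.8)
The plan is to exploit the three facts that hold precisely when Algorithm~\ref{alg:rnmso} reaches line~\ref{alg:line9}: we are in the first phase with $\hat\lambda_k \geq -\varepsilon_h$ (so $d_k$ is the CG output satisfying the accuracy criterion~\eqref{eq:dkso}), the returned direction has small norm $\|d_k\| \leq \frac{2\varepsilon_g}{\varepsilon_h}$, and $\varepsilon_h = \sqrt{L_h\varepsilon_g}$, hence $\varepsilon_h^2 = L_h\varepsilon_g$ and $L_h\|d_k\| \leq 2\varepsilon_h$. The two standard consequences of the Lipschitz continuity of $\nabla^2 f$ on $\mathcal{B}$ (Assumption~\ref{assume:ncpso}(ii)) that I will use are the Taylor remainder bound $\|\nabla f(x_k + d_k) - \nabla f(x_k) - \nabla^2 f(x_k)d_k\| \leq \frac{L_h}{2}\|d_k\|^2$ and the minimal-eigenvalue perturbation bound $\lambda_{\min}(\nabla^2 f(x_k + d_k)) \geq \lambda_{\min}(\nabla^2 f(x_k)) - L_h\|d_k\|$.

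For the gradient estimate, I would decompose
$\nabla f(x_k + d_k) = \big[\nabla f(x_k + d_k) - \nabla f(x_k) - \nabla^2 f(x_k)d_k\big] + \big[\nabla f(x_k) - g_k\big] + \big[(\nabla^2 f(x_k) - Q_k)d_k\big] + \big(g_k + Q_k d_k\big)$,
and then rewrite the last bracket as $g_k + Q_k d_k = r_k - 2\varepsilon_h d_k$ with $r_k = (Q_k + 2\varepsilon_h I_n)d_k + g_k$. Taking norms and applying Assumption~\ref{assume:gkhkso}, the criterion~\eqref{eq:dkso}, and $\|d_k\| \leq \frac{2\varepsilon_g}{\varepsilon_h}$ together with $\varepsilon_h^2 = L_h\varepsilon_g$, the five terms become at most $2\varepsilon_g$, $\frac{1}{3}\varepsilon_g$, $\frac{1}{9}\varepsilon_g$, $2\hat\mu\varepsilon_g$, and $4\varepsilon_g$, whose sum is exactly $(\frac{58}{9} + 2\hat\mu)\varepsilon_g$.

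For the Hessian estimate, I would start from the perturbation inequality above, then use $\lambda_{\min}(\nabla^2 f(x_k)) \geq \lambda_{\min}(Q_k) - \|Q_k - \nabla^2 f(x_k)\| \geq \lambda_{\min}(Q_k) - \frac{1}{18}\varepsilon_h$, and finally invoke Assumption~\ref{assume:lampso} with the branch condition $\hat\lambda_k \geq -\varepsilon_h$ to obtain $\lambda_{\min}(Q_k) \geq \hat\lambda_k - \frac{1}{2}\varepsilon_h \geq -\frac{3}{2}\varepsilon_h$. This gives $\lambda_{\min}(\nabla^2 f(x_k)) \geq -\frac{14}{9}\varepsilon_h$, and combined with $L_h\|d_k\| \leq 2\varepsilon_h$ we get $\lambda_{\min}(\nabla^2 f(x_k + d_k)) \geq -\frac{14}{9}\varepsilon_h - 2\varepsilon_h = -\frac{32}{9}\varepsilon_h = -\frac{32}{9}\sqrt{L_h\varepsilon_g}$.

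The argument is entirely elementary once the termination conditions are substituted; the only real care needed is the bookkeeping of the numerical constants so that they collapse to $\frac{58}{9}$ and $\frac{32}{9}$. This parallels~\cite[Lemma~8]{YXRWM23}, the difference being that the negative-curvature information enters here through Assumption~\ref{assume:lampso} on $(\hat\lambda_k,\hat p_k)$ rather than through the guarantees of the Capped CG procedure.
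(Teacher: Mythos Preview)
Your proposal is correct and follows essentially the same approach as the paper's own proof: the same five-term decomposition of $\nabla f(x_k+d_k)$ (the paper merely groups the last three terms into a single coefficient $(\tfrac{37}{18}+\hat\mu)\varepsilon_h\|d_k\|$ before substituting $\|d_k\|\le 2\varepsilon_g/\varepsilon_h$), and the same chain of eigenvalue inequalities via Assumptions~\ref{assume:gkhkso} and~\ref{assume:lampso} for the Hessian bound.
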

\begin{proof} 
Notice that
\begin{align*}
    \|\nabla f(x_k \!+\! d_k)\| \!\leq\!& \|\nabla f(x_k + d_k) \!-\! \nabla f(x_k) \!-\! \nabla^2f(x_k)d_k\| \!+\! \|\nabla f(x_k) \!-\! g_k\| \!+\! \|(\nabla^2f(x_k) \!-\! Q_k)d_k\| \\
    \!&+ \|(Q_k + 2\varepsilon_hI_n)d_k + g_k\| + 2\varepsilon_h\|d_k\|\\
    \!\leq\!& \frac{L_h}{2}\|d_k\|^2 + \frac{1}{3}\varepsilon_g + (\frac{37}{18} + \hat{\mu})\varepsilon_h\|d_k\| \leq (\frac{58}{9} + 2\hat{\mu})\varepsilon_g,
\end{align*}
where the second inequality follows from Assumptions~\ref{assume:ncpso} (ii) and~\ref{assume:gkhkso} and~\eqref{eq:dkso}, the last inequality follows from \(\|d_k\| \leq \frac{2\varepsilon_g}{\varepsilon_h}\), 

Under Assumption~\ref{assume:ncpso} (ii), we have 
\[
\|\nabla^2f(x_k + d_k) - \nabla^2f(x_k)\| \leq L_h\|d_k\|, 
\]
which yields
\begin{align*}
    \lambda_{\min}(\nabla^2f(x_k + d_k)) \geq& \lambda_{\min}(\nabla^2f(x_k)) - L_h\|d_k\| \geq \!\lambda_{\min}(Q_k) \!-\! \frac{1}{18}\varepsilon_h \!-\! L_h\|d_k\|\\
    \geq& \hat{\lambda}_k - \frac{1}{2}\varepsilon_h- \frac{1}{18}\varepsilon_h - L_h\|d_k\| \geq -\frac{14}{9}\varepsilon_h - L_h\frac{2\varepsilon_g}{\varepsilon_h}    = -\frac{32}{9}\sqrt{L_h}\varepsilon_g^{1/2}, 
\end{align*}
where the second inequality follows from Assumption~\ref{assume:gkhkso}, the third inequality follows from Assumption~\eqref{assume:lampso}, and the forth inequality follows from \(\hat{\lambda}_k \geq -\varepsilon_h\). 
\end{proof}

Notice that from~\eqref{eq:dkso}, we have 
    \[
    \|g_k\| \leq \|(Q_k + 2\varepsilon_hI_n)d_k\| + \hat{\mu}\varepsilon_h\|d_k\| \leq (L_g + (\frac{37}{18} + \hat{\mu})\varepsilon_h)\|d_k\|,
    \]
    which implies that line~\ref{line:9} of Algorithm~\ref{alg:rnmso} is not invoked whenever \(\|g_k\| > 2(L_g + (\frac{37}{18} + \hat{\mu})\varepsilon_h)\frac{\varepsilon_g}{\varepsilon_h}\).  

\begin{theorem}\label{th:conrnmso}
    Suppose that Assumptions~\ref{assume:ncpso},~\ref{assume:gkhkso},  and~\ref{assume:lampso} are satisfied. For a given \(\varepsilon > 0\), let \(\varepsilon_g = \varepsilon\), \(\varepsilon_h = \sqrt{L_h\varepsilon}\), and define
    \[
    \overline{K}_3 := \left\lceil \frac{2(f(x_0) - f_*)}{\min\{c_{nc}, c_{sol}\}}\varepsilon^{-3/2}\right\rceil + 1,
    \]
    where \(c_{sol} = \frac{5\sqrt{2}}{18\sqrt{3(L_h + \eta)}}\) and \(c_{nc} = \frac{17}{10368\sqrt{L_h}}\). Then Algorithm~\ref{alg:rnmso} terminates in at most \(\overline{K}_3\) iterations at a point \(x_k\) satisfying\footnote{`\(\lesssim\)' and `\(\gtrsim\)' denote that the corresponding inequality holds up to a certain constant that is independent of \(\varepsilon\) and \(L_h\).} \(\|\nabla f(x_k)\| \lesssim \varepsilon\) and \(\lambda_{\min}(\nabla^2f(x_k)) \gtrsim -\sqrt{L_h\varepsilon}\).  
\end{theorem}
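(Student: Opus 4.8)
The plan is to prove the two claims of the theorem separately: (a) that Algorithm~\ref{alg:rnmso} halts after at most $\overline{K}_3$ iterations, which I would get from a telescoping argument on $f$ driven by the per-step decreases of Lemmas~\ref{lem:dfncso} and~\ref{lem:dfsolso}; and (b) that whatever point the algorithm returns is an approximate second-order stationary point of the advertised quality, which I would get from a short case analysis over the exit routes. Throughout I fix $\varepsilon_g = \varepsilon$ and $\varepsilon_h = \sqrt{L_h\varepsilon}$ as in the statement.

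First I would note that each iteration of Algorithm~\ref{alg:rnmso} ends in exactly one of four mutually exclusive situations: (i) termination on line~\ref{alg:line9}, returning $x_k+d_k$; (ii) termination in the second phase, returning $x_k$; (iii) a negative-curvature step ($d_{\rm type}={\rm NC}$); or (iv) an approximate Newton step ($d_{\rm type}={\rm SOL}$). For (iv) to be reached one must have $\hat\lambda_k\geq-\varepsilon_h$, in which case $Q_k+2\varepsilon_hI_n\succeq\tfrac12\varepsilon_hI_n\succ0$ (exactly the estimate established inside the proof of Lemma~\ref{lem:dfsolso}), so the system $(Q_k+2\varepsilon_hI_n)d=-g_k$ is solvable and the CG iterates satisfy~\eqref{eq:dkso}; hence the step is well-defined. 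By Lemma~\ref{lem:dfncso}, situation (iii) gives $f(x_{k+1})\leq f(x_k)-c_{nc}\varepsilon^{3/2}$, and by Lemma~\ref{lem:dfsolso}, situation (iv) gives $f(x_{k+1})\leq f(x_k)-c_{sol}\varepsilon^{3/2}$. Consequently, every iteration that does \emph{not} terminate the algorithm reduces $f$ by at least $\min\{c_{nc},c_{sol}\}\varepsilon^{3/2}$.

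Next I would telescope. If the algorithm executes $N$ iterations of type (iii) or (iv) before it first halts, then summing the decreases and invoking the lower bound $f\geq f_*$ from Assumption~\ref{assume:ncpso} yields $f(x_0)-f_*\geq f(x_0)-f(x_N)\geq N\min\{c_{nc},c_{sol}\}\varepsilon^{3/2}$, so $N\leq \frac{f(x_0)-f_*}{\min\{c_{nc},c_{sol}\}}\varepsilon^{-3/2}\leq\overline{K}_3-1$; equivalently, were the algorithm never to halt this inequality would be violated after finitely many steps, a contradiction. Hence a termination condition, (i) or (ii), is triggered at an iteration index no larger than $\overline{K}_3$, the ceiling and the additive $1$ in the definition of $\overline{K}_3$ absorbing that final halting iteration.

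Finally I would certify the returned point. If the algorithm halts via (i), the lemma immediately preceding this theorem gives $\|\nabla f(x_k+d_k)\|\leq(\tfrac{58}{9}+2\hat\mu)\varepsilon$ and $\lambda_{\min}(\nabla^2f(x_k+d_k))\geq-\tfrac{32}{9}\sqrt{L_h\varepsilon}$. If it halts via (ii), then $\|g_k\|<\varepsilon_g$ and $\hat\lambda_k\geq-\varepsilon_h$, and the estimate recorded right after Assumption~\ref{assume:lampso} gives $\|\nabla f(x_k)\|\leq\tfrac43\varepsilon$ and $\lambda_{\min}(\nabla^2f(x_k))\geq-\tfrac{14}{9}\sqrt{L_h\varepsilon}$. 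In either case the output $x$ satisfies $\|\nabla f(x)\|\lesssim\varepsilon$ and $\lambda_{\min}(\nabla^2f(x))\gtrsim-\sqrt{L_h\varepsilon}$, which is the assertion. The argument is largely bookkeeping once the three cited lemmas are in hand; the only points that deserve care are confirming that the four cases above are genuinely exhaustive (so that no iteration fails to contribute a $\Theta(\varepsilon^{3/2})$ decrease) and that the constant $\overline{K}_3$, with its rounding, correctly covers the terminating step — I do not anticipate a substantive obstacle beyond this.
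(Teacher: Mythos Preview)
Your proposal is correct and follows essentially the same approach as the paper's proof: classify the step types, invoke Lemmas~\ref{lem:dfncso} and~\ref{lem:dfsolso} for the per-step decrease, telescope against the lower bound $f_*$, and certify the output via the lemma preceding the theorem and the remark after Assumption~\ref{assume:lampso}. Your single telescope over all non-terminating steps is in fact slightly tighter than the paper's (which telescopes separately over the NC and SOL indices and then adds, picking up the factor~$2$ in $\overline{K}_3$), but this is an organizational, not a substantive, difference.
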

\begin{proof}
    Suppose for contradiction that Algorithm~\ref{alg:rnmso} runs for at least \(K\) steps  for some \(K > \overline{K}_3\), we define the following four classes of indices.
    \begin{align*}
        \mathcal{K}_1 :=& \{k = 0, 1, \ldots, K - 1\vert \|g_k\| < \varepsilon\};\\
        \mathcal{K}_2:=& \{k = 0, 1, \ldots, K - 1\vert \|g_k\| \geq \varepsilon, d_{\rm type} = {\rm NC}\};\\
        \mathcal{K}_3 :=& \{k = 0, 1, \ldots, K - 1\vert \|g_k\| \geq \varepsilon, \hat{\lambda}_k \geq -\sqrt{L_h\varepsilon}, \|d_k\| \leq \frac{2\varepsilon^{1/2}}{\sqrt{L_h}}\};\\
        \mathcal{K}_4 :=& \{k = 0, 1, \ldots, K - 1\vert \|g_k\| \geq \varepsilon, \hat{\lambda}_k \geq -\sqrt{L_h\varepsilon}, d_{\rm type} = {\rm SOL}\}.
        \end{align*}
We consider each of these types of steps in turn. 

Case 1: if \(k \in \mathcal{K}_1\), then either the algorithm terminates or from Lemma~\ref{lem:dfncso}, we have
\begin{equation}\label{eq:case12}
    f(x_{k+1}) \leq f(x_k) - c_{nc}\varepsilon^{3/2}.
\end{equation}

Case 2: if \(k \in \mathcal{K}_2\), then from Lemma~\ref{lem:dfncso}, we have~\eqref{eq:case12}.

Case 3: if \(k \in \mathcal{K}_3\), then the algorithm is terminated and we have \(\vert \mathcal{K}_3\vert \leq 1\). 

Case 4: if \(k \in \mathcal{K}_4\), then from Lemma~\ref{lem:dfsolso}, we have 
\begin{equation}\label{eq:case4}
    f(x_{k+1}) \leq f(x_k)  - c_{sol}\varepsilon^{3/2}.
    \end{equation}

From~\eqref{eq:case12}, we have 
\[
f(x_0) - f_* \geq \!\sum_{k = 0}^{K - 1}f(x_k) - f(x_{k+1}) \geq \!\!\!\!\!\!\sum_{k\in\mathcal{K}_1\cup \mathcal{K}_2}\!\!\!\!\!f(x_k) - f(x_{k+1}) \geq (\vert\mathcal{K}_1\vert + \vert \mathcal{K}_2\vert)c_{nc}\varepsilon^{3/2}.
\]
From~\eqref{eq:case4}, one obtains
\[
f(x_0) - f_* \geq \sum_{k = 0}^{K - 1}f(x_k) - f(x_{k+1}) \geq \sum_{k\in\mathcal{K}_4}f(x_k) - f(x_{k+1}) \geq \vert\mathcal{K}_4\vert c_{sol}\varepsilon^{3/2}.
\]
Hence, 
\begin{align*}
K =& \vert \mathcal{K}_1\vert + \vert \mathcal{K}_2\vert + \vert \mathcal{K}_3\vert + \vert \mathcal{K}_4\vert \leq \frac{f(x_0) - f_*}{c_{nc}}\varepsilon^{-3/2} + \frac{f(x_0) - f_*}{c_{sol}}\varepsilon^{-3/2} + 1\\
\leq& \frac{2(f(x_0) - f_*)}{\min\{c_{nc}, c_{sol}\}}\varepsilon^{-3/2}  + 1 \leq \overline{K}_3,
\end{align*}
which contradicts our assumption that \(K > \overline{K}_3\).
\end{proof}

We now analyze the operation complexity of Algorithm~\ref{alg:rnmso} in terms of approximate gradient evaluations (\(g_k\)) and approximate Hessian-vector products (\(Q_kv\) for \(v\in\mathbb{R}^n\)). The latter mainly arises from the estimation of the minimum eigenvalue of \(Q_k\) and solving the linear system \((Q_k + 2\varepsilon_h I_n)d = -g_k\). 
Suppose that the Lanczos method is adopted to estimate the minimum eigenvalue of \(Q_k\), starting from a random vector uniformly sampled on the unit sphere. According to~\cite[Lemma 10]{ROW20}, for any given \(0 < \delta \ll 1\), Assumption~\ref{assume:lampso} holds with probability at least \(1 - \delta\), requiring at most 
\begin{equation}\label{eq:cclambdaminso}
\min\{n, \mathcal{O}(\varepsilon_h^{-\frac{1}{2}}\ln(n/\delta))\}
\end{equation}
matrix-vector multiplications involving \(Q_k\). When the CG method  is invoked to solve the linear system \((Q_k + 2\varepsilon_hI_n)d = -g_k\), the number of iterations is bounded by (see~\cite[Lemma 1]{ROW20})
\begin{equation}\label{eq:cccgso}
\min\{n, \widetilde{O}(\varepsilon_h^{-1/2})\}.
\end{equation}

\begin{corollary}\label{coro:ccrnmso}
Suppose that Assumptions~\ref{assume:ncpso} and~\ref{assume:gkhkso} are satisfied and the number of iterations to satisfy Assumption~\ref{assume:lampso} is bounded by~\eqref{eq:cclambdaminso}. Let \(\overline{K}_3\) be defined as in Theorem~\ref{th:conrnmso}. Then for given \(0 < \delta \ll 1\), with probability at least \((1 - \delta)^{\overline{K}_3}\), Algorithm~\ref{alg:rnmso} terminates at a point x satisfying \(\|\nabla f(x)\| \lesssim \varepsilon\) and \(\lambda_{\min}(\nabla^2f(x)) \gtrsim -\sqrt{L_h\varepsilon}\) after at most
\[
\mathcal{O}(\overline{K}_3\min\{n, \varepsilon^{-\frac{1}{4}}\ln(n\delta^{-1}\varepsilon^{-\frac{1}{2}})\})
\]
Hessian-vector products and /or gradient functions. (With probability at least \(1 - (1 - \delta)^{\overline{K}_3}\), it terminates incorrectly with this complexity at a point x satisfying \(\|\nabla f(x)\| \lesssim \varepsilon\).)
For \(n\) sufficiently large, with probability at least \((1 - \delta)^{\overline{K}_3}\), the bound is \(\widetilde{\mathcal{O}}(\varepsilon^{-\frac{7}{4}})\).
\end{corollary}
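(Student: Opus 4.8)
The plan is to combine the iteration bound of Theorem~\ref{th:conrnmso} with the per-iteration work estimates~\eqref{eq:cclambdaminso}--\eqref{eq:cccgso}, and to pay for the randomness of the eigenvalue computation by a union bound over the iterations. First I would account for the operations carried out in a single pass of the main loop of Algorithm~\ref{alg:rnmso}. Every iteration performs exactly one inexact gradient evaluation (namely $g_{k+1}$, the value $g_k$ having been produced in the previous iteration); forming the pair $(\hat\lambda_k,\hat p_k)$ by the Lanczos method costs, by~\eqref{eq:cclambdaminso}, at most $\min\{n,\mathcal{O}(\varepsilon_h^{-1/2}\ln(n/\delta))\}$ Hessian-vector products with $Q_k$ and, with probability at least $1-\delta$, returns an estimate satisfying Assumption~\ref{assume:lampso}; when $d_{\rm type}={\rm SOL}$, the CG solve of $(Q_k+2\varepsilon_hI_n)d=-g_k$ adds, by~\eqref{eq:cccgso}, at most $\min\{n,\widetilde{\mathcal{O}}(\varepsilon_h^{-1/2})\}$ further Hessian-vector products; the two negative-curvature branches and the terminal step on line~\ref{alg:line9} contribute only $\mathcal{O}(1)$ additional gradient evaluations. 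Hence a single iteration costs $\mathcal{O}(\min\{n,\varepsilon_h^{-1/2}\ln(n/\delta)\})$ Hessian-vector products and gradient evaluations, the CG term being absorbed up to poly-logarithmic factors in $\varepsilon_h^{-1}$. Substituting $\varepsilon_h=\sqrt{L_h\varepsilon}$, so that $\varepsilon_h^{-1/2}=L_h^{-1/4}\varepsilon^{-1/4}$ and $\ln(\varepsilon_h^{-1})$ is of the same order as $\ln(\varepsilon^{-1/2})$, this per-iteration cost is $\mathcal{O}(\min\{n,\varepsilon^{-1/4}\ln(n\delta^{-1}\varepsilon^{-1/2})\})$ with an $L_h$-dependent hidden constant.

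\textbf{From per iteration to total.} Next I would condition on the event $E$ that Assumption~\ref{assume:lampso} is met at every executed iteration. Since the Lanczos routine is restarted with a fresh uniformly random unit vector at each iteration and Theorem~\ref{th:conrnmso} guarantees at most $\overline{K}_3$ iterations under Assumptions~\ref{assume:ncpso},~\ref{assume:gkhkso} and~\ref{assume:lampso}, the event $E$ has probability at least $(1-\delta)^{\overline{K}_3}$, and on $E$ the algorithm stops within $\overline{K}_3$ iterations at a point $x$ with $\|\nabla f(x)\|\lesssim\varepsilon$ and $\lambda_{\min}(\nabla^2f(x))\gtrsim-\sqrt{L_h\varepsilon}$ (Theorem~\ref{th:conrnmso}, together with the lemma that bounds $\nabla f$ and $\lambda_{\min}(\nabla^2 f)$ at a point returned on line~\ref{alg:line9}). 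Multiplying the per-iteration bound by $\overline{K}_3$ yields the claimed total $\mathcal{O}(\overline{K}_3\min\{n,\varepsilon^{-1/4}\ln(n\delta^{-1}\varepsilon^{-1/2})\})$; and for $n$ large enough that the minimum equals its second argument, using $\overline{K}_3=\mathcal{O}(\varepsilon^{-3/2})$, this becomes $\mathcal{O}(\varepsilon^{-3/2}\cdot\varepsilon^{-1/4}\ln(n\delta^{-1}\varepsilon^{-1/2}))=\widetilde{\mathcal{O}}(\varepsilon^{-7/4})$. On the complementary event, of probability at most $1-(1-\delta)^{\overline{K}_3}$, I would note that the first-phase tests and the descent steps governed by $\|g_k\|$ are still executed as described, so the run terminates within the same operation budget, but only the first-order certificate $\|\nabla f(x)\|\lesssim\varepsilon$ can be asserted there.

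\textbf{Main obstacle.} The arithmetic in the two steps above is routine bookkeeping; the delicate part is the coupling between the randomized eigenvalue oracle and the iteration count. One has to justify conditioning on $E$ rather than on a single successful Lanczos call --- this is exactly what produces the $(1-\delta)^{\overline{K}_3}$ factor --- and, for the parenthetical claim, verify that the $\overline{K}_3$ iteration bound persists on $E^c$. The latter rests on the observation that the negative-curvature descent of Lemma~\ref{lem:dfncso} uses only $\hat\lambda_k=\hat p_k^\top Q_k\hat p_k<-\varepsilon_h$ and Assumption~\ref{assume:gkhkso}, both valid regardless of the oracle's accuracy, while the a priori estimate $\|g_k\|\le 2(L_g+(\tfrac{37}{18}+\hat\mu)\varepsilon_h)\varepsilon_g/\varepsilon_h$ keeps line~\ref{alg:line9} from firing while $\|g_k\|$ is large; one should also check that in the regime where both Lanczos and CG reach their trivial bound $n$, the per-iteration cost is $\mathcal{O}(n)$, so that the product with $\overline{K}_3$ reproduces the $\min\{n,\cdot\}$ form of the stated bound.
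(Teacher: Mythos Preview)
Your main argument is correct and matches the paper's own proof: both sum the per-iteration Lanczos bound~\eqref{eq:cclambdaminso} and CG bound~\eqref{eq:cccgso} over at most $\overline{K}_3$ iterations, combine the two $\min\{n,\cdot\}$ terms, substitute $\varepsilon_h=\sqrt{L_h\varepsilon}$, and then use $\overline{K}_3=\mathcal{O}(\varepsilon^{-3/2})$ to obtain $\widetilde{\mathcal{O}}(\varepsilon^{-7/4})$ for large $n$. Your treatment of the probability via a union bound over the $\overline{K}_3$ Lanczos calls is more explicit than the paper's, which simply performs the arithmetic and asserts the result.

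Where you go beyond the paper is in attempting to justify the parenthetical claim about the complementary event $E^c$; the paper's proof does not address this at all. Your argument there, however, has a gap. You correctly note that the NC descent of Lemma~\ref{lem:dfncso} relies only on $\hat\lambda_k<-\varepsilon_h$ and Assumption~\ref{assume:gkhkso}, not on Assumption~\ref{assume:lampso}. But you do not handle the case where the Lanczos oracle fails by returning $\hat\lambda_k\ge-\varepsilon_h$ while in fact $\lambda_{\min}(Q_k)<-\tfrac{3}{2}\varepsilon_h$. In that scenario the algorithm enters the SOL branch with $Q_k+2\varepsilon_hI_n$ indefinite, so neither the CG iteration bound~\eqref{eq:cccgso} nor the descent of Lemma~\ref{lem:dfsolso} (whose proof opens with $Q_k+2\varepsilon_hI_n\succeq\tfrac12\varepsilon_hI_n$, a consequence of Assumption~\ref{assume:lampso}) is available. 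Hence neither the per-iteration operation budget nor the $\overline{K}_3$ iteration count is guaranteed on $E^c$, and the observation about line~\ref{alg:line9} not firing when $\|g_k\|$ is large does not rescue this. Since the paper leaves the parenthetical unproved as well, this is not a deficiency relative to the paper, but you should not present the $E^c$ analysis as settled.
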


\begin{proof}
From~\eqref{eq:cclambdaminso} and~\eqref{eq:cccgso}, approximate Hessian-vector products of Algorithm~\ref{alg:rnmso} can be bounded by
\begin{align*}
\!&\sum_{k=0}^{\overline{K}_3 - 1}\min\{n, \widetilde{O}(\varepsilon_h^{-1/2})\} + \min\{n, \mathcal{O}(\varepsilon_h^{-\frac{1}{2}}\ln(n/\delta))\}\\
=\!& \mathcal{O}(\overline{K}_3\min\{2n, n \!+\! \varepsilon_h^{-\frac{1}{2}}\ln(n/\delta), n \!+\! \varepsilon_h^{-1/2}\ln(\varepsilon_h^{-1}), \varepsilon_h^{-1/2}\ln(\varepsilon_h^{-1}) \!+\! \varepsilon_h^{-\frac{1}{2}}\ln(n/\delta)\} )\\
=\!&\mathcal{O}(\overline{K}_3\min\{2n, n + \varepsilon^{-\frac{1}{4}}\ln(n/\delta), n + \varepsilon^{-1/4}\ln(\varepsilon^{-\frac{1}{2}}), \varepsilon^{-1/4}\ln(n\delta^{-1}\varepsilon^{-\frac{1}{2}})\}).
\end{align*}
For \(n\) sufficiently large, the above bound becomes to 
\[
\mathcal{O}(\overline{K}_3\varepsilon^{-1/4}\ln(n\delta^{-1}\varepsilon^{-\frac{1}{2}})) = \mathcal{O}(\varepsilon^{-\frac{3}{2}}\varepsilon^{-1/4}\ln(n\delta^{-1}\varepsilon^{-\frac{1}{2}})) = \widetilde{\mathcal{O}}(\varepsilon^{-\frac{7}{4}}).
\] 
\end{proof}

\begin{remark}
If \(f\) is \(\sigma\)-strongly convex, then under Assumption~\ref{assume:gkhkso} with \(\varepsilon_h = \sqrt{L_h}\varepsilon_g^{1/2}\), we have \(Q_k \succeq 0\) and \(Q_k + 2\sqrt{L_h}\varepsilon_g^{1/2}I_n \succeq 2\sqrt{L_h}\varepsilon_g^{1/2}I_n\) for any \(\varepsilon_g < \frac{18^2\sigma^2}{L_h}\). Algorithm~\ref{alg:offrn2cmsconv} describes the behavior of Algorithm~\ref{alg:rnmso} when \(f\) is \(\sigma\)-strongly convex. 

\begin{algorithm*}[h!]
\caption{OFF Newton-type method for strongly convex optimization (OFF-Newton-II).}\label{alg:offrn2cmsconv}
\begin{algorithmic}[1]
\Require{\(L_h\), \(\varepsilon_g\in(0, L_h)\), \(\varepsilon_h = \), \(\hat{\mu} > 0\), and \(\eta > 0\).}
\For{\(k = 0, 1, \ldots\) until terminate}
\State{Invoke the CG method to solve the linear system \((Q_k + 2\sqrt{L_h}\varepsilon_g^{1/2}I_n)d = -g_k\) with the zero vector as the initial guess, and return a vector \(d_k\) that satisfies}
\[
   \| (Q_k + 2\sqrt{L_h}\varepsilon_g^{1/2}I_n)d_k + g_k \| \leq \hat{\mu}\sqrt{L_h}\varepsilon_g^{1/2}\|d_k\|;
\]
\If{\(\|d_k\| \leq \frac{2}{\sqrt{L_h}}\varepsilon_g^{1/2}\)}
\State{Terminate and return \(x_k + d_k\);}\label{alg:line9}
\Else
\State{\(x_{k+1} \gets x_k + \frac{\sqrt{2}}{\sqrt{3(L_h+\eta)}\|d_k\|}\varepsilon_g^{1/2}d_k\);}
\State{compute \(g_{k+1}\) and \(Q_{k+1}\).}
\EndIf
\EndFor\\
\Return{\(\{x_k\}\)}
\end{algorithmic}
\end{algorithm*}

The following statements hold for Algorithm~\ref{alg:offrn2cmsconv}. 
\begin{itemize}
\item[(a)] If \(\|d_k\| > \frac{2}{\sqrt{L_h}}\varepsilon_g^{1/2}\), then we have \(\|x_{k+1} - x_k\| \equiv \frac{2}{\sqrt{3(L_h + \eta)}}\varepsilon_g^{1/2}\);
\item[(b)] Similar discussion to Lemma~\ref{lem:upperdk} yields 
\[
 \|d_k\| \leq \frac{\sqrt{L_h}}{2}\varepsilon_g^{-1/2}\|x_k - x^*\|^2  + \frac{L_g}{\sqrt{L_h}}\varepsilon_g^{-1/2}\|x_k - x^*\| + \frac{1}{3\sqrt{L_h}}\varepsilon_g^{1/2}. 
 \]
 Hence, if \(x_k \in \mathbb{B}(x^*, \hat{\epsilon}_0)\) with \(\hat{\varepsilon}_0 \in (0, \frac{1}{\sqrt{L_h}}(\sqrt{\frac{L_g^2}{L_h} + \frac{10}{3}\varepsilon_g} - \frac{L_g}{\sqrt{L_h}})]\), then \(\|d_k\| \leq \frac{2}{\sqrt{L_h}}\varepsilon_g^{1/2}\), which terminates the iteration of Algorithm~\ref{alg:offrn2cmsconv}.
\end{itemize}
\end{remark}

\subsection{OFF-RN2CM for finite-sum}\label{sec:OFFn2cpfs}
In this section, we study the complexity of Algorithm~\ref{alg:rnmso} for finite-sum problem~\eqref{eq:fs} under the subsampling scheme~\eqref{eq:subsample}, where the per-sample gradients and Hessians satisfy the bounded variance assumption. Similar discussion to those presented in  Subsection~\ref{sec:offrnfs} indicate that the conditions \(\vert \mathcal{S}_{g, k} \vert \geq \frac{9\sigma_g^2}{\varepsilon_g^2}\) and \(\vert \mathcal{S}_{h, k} \vert \geq \frac{324\sigma_h^2}{\varepsilon_h^2}\) guarantee the estimation errors  
\[
\mathbb{E}_{\xi_k}[\|g_k - \nabla f(x_k)\|] \leq \frac{1}{3}\varepsilon_g, \quad {\rm and}\quad \mathbb{E}_{\xi_k}[\|Q_k - \nabla^2f(x_k)\|] \leq \frac{1}{18}\varepsilon_h. 
\]

\begin{theorem}\label{th:conrnmsofs2}
Suppose that Assumptions~\ref{assumption:bv},~\ref{assume:ncpso}, and~\ref{assume:lampso} are satisfied. In particular, \(\nabla f_i\) is \(L_{g_i}\)-Lipschitz continuous for all \(i \in[m]\), and suppose that at each iteration \(k\), \(g_k\) and \(Q_k\) are obtained from~\eqref{eq:subsample} with \(\vert \mathcal{S}_{g, k} \vert \geq \frac{9\sigma_g^2}{\varepsilon_g^2}\) and \(\vert \mathcal{S}_{h, k} \vert \geq \frac{324\sigma_h^2}{\varepsilon_h^2}\). For a given \(\varepsilon > 0\), let \(\varepsilon_g = \varepsilon\) and \(\varepsilon_h = \sqrt{L_h\varepsilon}\). Then Algorithm~\ref{alg:rnmso} terminates in at most \(\overline{K}_3\) iterations at a point \(x_k\) satisfying \(\mathbb{E}_{\xi_{k-1}}[\|\nabla f(x_k)\|] \lesssim \varepsilon\) and \(\mathbb{E}_{\xi_{k-1}}[\lambda_{\min}(\nabla^2f(x_k))]\gtrsim -\sqrt{L_h\varepsilon}\), where \(\overline{K}_3\) is defined as in Theorem~\ref{th:conrnmso}.  
\end{theorem}
\begin{proof}
The statement holds by noting that, under Assumptions~\ref{assumption:bv},~\ref{assume:ncpso}, and~\ref{assume:lampso}, when line~\ref{line:9} of Algorithm~\ref{alg:rnmso} is invoked at \(x_k\), Lemmas~\ref{lem:dfncso}-\ref{eq:ternatestep9} hold in expectation, that is,
\begin{align*}
\mathbb{E}_{\xi_k}[f(x_{k+1})] \leq& \mathbb{E}_{\xi_{k-1}}[f(x_k)] - \frac{17}{10368\sqrt{L_h}}\varepsilon^{3/2}, \quad {\rm if}~d_{\rm type} = {\rm NC};\\
\mathbb{E}_{\xi_k}[f(x_{k+1})] \leq& \mathbb{E}_{\xi_{k-1}}[f(x_k)]  - \frac{5\sqrt{2}}{18\sqrt{3(L_h + \eta)}}\varepsilon^{ 3/2}, \quad {\rm if}~d_{\rm type} = {\rm SOL},
\end{align*}
and 
\[
\mathbb{E}_{\xi_k}[\|\nabla f(x_k + d_k)] \leq (\frac{58}{9} + 2\hat{\mu})\varepsilon, \quad {\rm and}\quad \mathbb{E}_{\xi_k}[\lambda_{\min}(\nabla^2f(x_k+d_k))] \geq -\frac{32}{9}\sqrt{L_h\epsilon}.
\]
\end{proof}

We next characterize the sample operation complexity of  Algorithm~\ref{alg:rnmso} for the finite-sum problem~\eqref{eq:fs}, which counts either an gradient function (\(\nabla f_i(x)\)) or an Hessian-vector product (\(\nabla^2f_i(x)p\) for some \(p\in\mathbb{R}^n\)). Note that both Assumptions~\ref{assume:gkhkso} and~\ref{assume:lampso} hold probabilistically. For the subsequent analysis, we consider the ideal case in which neither Assumption~\ref{assume:gkhkso} nor Assumption~\ref{assume:lampso} is violated throughout the  iteration. 

\begin{corollary}\label{coro:ccrnmsofs2}
Suppose Assumptions~\ref{assumption:bv},~\ref{assume:ncpso}, and~\ref{assume:lampso} are satisfied. In particular, \(\nabla f_i\) is \(L_{g_i}\)-Lipschitz continuous for all \(i \in[m]\). At each iteration \(k\), the estimators \(g_k\) and \(Q_k\) are generated via~\eqref{eq:subsample} with sample sizes satisfying \(\vert \mathcal{S}_{g, k} \vert \geq \frac{9\sigma_g^2}{\varepsilon_g^2}\) and \(\vert \mathcal{S}_{h, k} \vert \geq \frac{324\sigma_h^2}{\varepsilon_h^2}\). Furthermore, we assume that the iteration cost required to satisfy  Assumption~\ref{assume:lampso} is bounded by~\eqref{eq:cclambdaminso}. For a given \(\varepsilon > 0\), let \(\varepsilon_g = \varepsilon\) and \(\varepsilon_h = \sqrt{L_h\varepsilon}\). Then Algorithm~\ref{alg:rnmso} terminates at an iterate \(x_k\) such that \(\mathbb{E}_{\xi_{k-1}}[\|\nabla f(x_k)\|] \lesssim \varepsilon\) and \(\mathbb{E}_{\xi_{k-1}}[\lambda_{\min}(\nabla^2f(x_k))]\gtrsim -\sqrt{L_h\varepsilon}\) after at most
\[
\mathcal{O}(\overline{K}_3(\varepsilon^{-2} \!+ \varepsilon^{-1}\min\{2n, n \!+\! \varepsilon^{-\frac{1}{4}}\ln(n/\delta), n \!+\! \varepsilon^{-1/4}\ln(\varepsilon^{-\frac{1}{2}}), \varepsilon^{-1/4}\ln(n\delta^{-1}\varepsilon^{-\frac{1}{2}})\}))
\]
sample operations, where \(\overline{K}_3\) is defined as in Theorem~\ref{th:conrnmso}. For \(n\) sufficiently large,  the bound is \(\mathcal{O}(\varepsilon^{-\frac{7}{2}})\).
\end{corollary}
\begin{proof}
Define \(\mathcal{C}_k = \vert \mathcal{S}_{g, k}\vert + \vert \mathcal{S}_{h, k}\vert (\min\{n, \widetilde{O}(\varepsilon_h^{-1/2})\} + \min\{n, \mathcal{O}(\varepsilon_h^{-\frac{1}{2}}\ln(n/\delta))\})\).  Notice that \(\vert \mathcal{S}_{g, k}\vert = \mathcal{O}(\varepsilon^{-2})\) and \(\vert \mathcal{S}_{h, k}\vert = \mathcal{O}(\varepsilon^{-1})\), we have  
\begin{align*}
\mathcal{C}_k =& \mathcal{O}(\varepsilon^{-2}) + \mathcal{O}(\varepsilon^{-1})\min\{2n, n + \varepsilon^{-\frac{1}{4}}\ln(n/\delta), n + \varepsilon^{-1/4}\ln(\varepsilon^{-\frac{1}{2}}), \varepsilon^{-1/4}\ln(n\delta^{-1}\varepsilon^{-\frac{1}{2}})\}.   
\end{align*} 
The total number of sample operator is bounded by
\[
\sum_{k=0}^{\overline{K}_3 - 1}\mathcal{C}_k =\mathcal{O}(\overline{K}_3(\varepsilon^{-2} \!+ \varepsilon^{-1}\min\{2n, n \!+\! \varepsilon^{-\frac{1}{4}}\ln(n/\delta), n \!+\! \varepsilon^{-1/4}\ln(\varepsilon^{-\frac{1}{2}}), \varepsilon^{-1/4}\ln(n\delta^{-1}\varepsilon^{-\frac{1}{2}})\})). 
\]
For \(n\) sufficiently large, the above bound becomes to 
\[
 \mathcal{O}(\varepsilon^{-3/2}(\varepsilon^{-2} + \varepsilon^{-5/4}\ln(n\delta^{-1}\varepsilon^{-1/2}))) = \mathcal{O}(\varepsilon^{-7/2}). 
\] 
\end{proof}

\begin{remark}\label{remark:alg6fs}
Here are some remarks for Algorithm~\ref{alg:rnmso} solving Problem~\eqref{eq:fs}.  
\begin{itemize}
\item[(a)] Suppose Assumption~\ref{assume:ncpso} is satisfied, In particular, \(\nabla f_i\) is \(L_{g_i}\)-Lipschitz continuous for all \(i \in[m]\), and suppose that at each iteration \(k\), \(g_k\) and \(Q_k\) are obtained from~\eqref{eq:subsample}. Let \(0 < \widehat{U}_g, \widehat{U}_h < +\infty\) be such that \(\|\nabla f(x)\| \leq \widehat{U}_g\) and \(\|\nabla^2f(x)\| \leq \widehat{U}_h\)  for all \(x\in \mathcal{L}_f(x_0)\), and   
\[
\vert \mathcal{S}_{g, k}\vert \geq \frac{9\widehat{U}_g^2(1 +\sqrt{8\ln(1/\bar{\delta})})^2}{\varepsilon_g^2} \quad {\rm and}\quad \vert \mathcal{S}_{h, k}\vert \geq \frac{5184\widehat{U}_h^2}{\varepsilon_h^2}\log\frac{2n}{\bar{\delta}},
\]
for any given \(\bar{\delta} \in (0, 1)\).  Then Assumptions~\ref{assume:gkhkso} is satisfied at iteration \(k\) with probability at least \(1 - \bar{\delta}\) (\cite[Lemmas 2 and 3]{RM19}~and~\cite[Lemma 16]{XRM20}). 

\item[(b)] We assume that failure to satisfy Assumption~\ref{assume:gkhkso} never occurs at any iteration. 
\begin{itemize}
\item Similar to Theorem~\ref{th:conrnmso}, the following complexity result holds.

\textit{``Suppose Assumptions~\ref{assume:ncpso} and~\ref{assume:lampso} are satisfied. Let \(\bar{\delta} \in (0, 1)\) be given, and suppose that at each iteration \(k\), \(g_k\) and \(Q_k\) are obtained from~\eqref{eq:subsample}, with \(\mathcal{S}_{g,k}\) and \(\mathcal{S}_{h,k}\) satisfying the lower bounds in the statement (a). For a given \(\varepsilon > 0\), let \(\varepsilon_g = \varepsilon\) and \(\varepsilon_h = \sqrt{L_h\varepsilon}\). Then with probability at least \((1 - \bar{\delta})^{\overline{K}_3}\), Algorithm~\ref{alg:rnmso} terminates in at most \(\overline{K}_3\) iterations at a point \(x_k\) satisfying \(\|\nabla f(x_k)\| \lesssim \varepsilon\) and \(\lambda_{\min}(\nabla^2f(x_k))\gtrsim -\sqrt{L_h\varepsilon}\), where \(\overline{K}_3\) is defined as in Theorem~\ref{th:conrnmso}."}

\item Similar to Corollary~\ref{coro:ccrnmso}, the following complexity result holds.

\textit{``Suppose that Assumption~\ref{assume:ncpso} is satisfied.  
Let \(\bar{\delta} \in (0, 1)\) be given, and suppose that at each iteration \(k\), \(g_k\) and \(Q_k\) are obtained from~\eqref{eq:subsample}, with \(\mathcal{S}_{g,k}\) and \(\mathcal{S}_{h,k}\) satisfying the lower bounds in the statement (a).
Suppose that the number of iterations to satisfy Assumption~\ref{assume:lampso} is bounded by~\eqref{eq:cclambdaminso}. Let \(\overline{K}_3\) be defined as in Theorem~\ref{th:conrnmso}. Then for given \(0 < \delta \ll 1\), with probability at least \((1 - \delta)^{\overline{K}_3}(1 - \bar{\delta})^{\overline{K}_3}\), Algorithm~\ref{alg:rnmso} terminates at a point \(x_k\) satisfying \(\|\nabla f(x_k)\| \lesssim \varepsilon\) and \(\lambda_{\min}(\nabla^2f(x_k)) \gtrsim -\sqrt{L_h\varepsilon}\) after at most
\[
\mathcal{O}(\overline{K}_3(\varepsilon^{-2} \!+ \varepsilon^{-1}\min\{2n, n \!+\! \varepsilon^{-\frac{1}{4}}\ln(n/\delta), n \!+\! \varepsilon^{-1/4}\ln(\varepsilon^{-\frac{1}{2}}), \varepsilon^{-1/4}\ln(n\delta^{-1}\varepsilon^{-\frac{1}{2}})\})).
\]
sample operations, where \(\overline{K}_3\) is defined as in Theorem~\ref{th:conrnmso}. For \(n\) sufficiently large, with probability at least \((1 - \delta)^{\overline{K}_3}(1 - \bar{\delta})^{\overline{K}_3}\), the bound is \(\mathcal{O}(\varepsilon^{-\frac{7}{2}})\)."}
\end{itemize}
\end{itemize}
\end{remark}

\section{Conclusions}\label{sec:con}

This paper presents a systematic and detailed study of OFF Newton-type methods. Specifically, we extend the OFF algorithm paradigm from smooth optimization to non-smooth composite optimization problems,

We establish both global and local convergence guarantees, demonstrating that second-order information can be effectively harnessed even for non-smooth problems without the need for function value information. The proposed lazy gradient variants illustrates that the accuracy required for inexact derivatives (Assumption~\ref{assume:gkhk}) is indeed attainable under verifiable conditions. By devising an adaptive sampling strategy, we resolve the circular dependency between sample size and gradient estimation for finite-sum optimization.

One common limitation of the developed algorithms is the need for the Lipschitz constant \(L_g\). As demonstrated in the convergence analysis of Algorithm~\ref{alg:pmm}, this constant plays a key role in proving Inequality~\eqref{eq:dvarphi}. Specifically, this implies \(f(x_{k+1}) \leq f(x_k) + \langle \nabla f(x_k), x_{k+1} - x_k\rangle + \frac{L_g}{2}\|x_{k+1} - x_k\|^2\); together with Assumption~\ref{assume:gkhk}, it yields 
 \[
f(x_{k+1}) - f(x_k) - \langle g_k, x_{k+1} - x_k\rangle - \frac{L_g}{2}\|x_{k+1} - x_k\|^2 - \delta_k^g\|\widetilde{\mathcal{G}}(x_k)\|\|x_{k+1} - x_k\| \leq 0.
\]
Since exact objective function values are unavailable in the OEF framework, traditional adaptive line-search techniques based on the above inequality for determining \(L_g\) become inherently inapplicable. While employing stochastic approximations of the objective function as a proxy to drive the adaptive process presents a potential solution, it necessitates additional theoretical tools to account for the uncertainty introduced by approximation bias. We leave the rigorous development of adaptive OFF mechanisms for future research.

\section*{Funding}
This work is supported by funds from National Natural Science Foundation of China (\# 12271217).

\bibliographystyle{unsrt}

\bibliography{manuscript}

\end{document}